\newtheorem{theorem}{Theorem}[section]
\newtheorem{lemma}[theorem]{Lemma}
\newtheorem{corollary}[theorem]{Corollary}
\theoremstyle{definition}
\newtheorem{definition}[theorem]{Definition}
\newtheorem{remark}[theorem]{Remark}
\numberwithin{equation}{section}
\newtheorem{assumption}[theorem]{Assumption}
\begin{document}

\title[multilinear singular integral operators]{Weighted vector-valued bounds for a class of multilinear singular integral operators and applications}

\author[J. Chen]{Jiecheng Chen}
\address{
Jiecheng Chen, Department of  Mathematics,
Zhejiang Normal University,
Jinhua  321004,
P. R. China
}
\author[G. Hu]{Guoen Hu}
\address{Guoen Hu, Department of  Applied Mathematics, Zhengzhou Information Science and Technology Institute,
Zhengzhou 450001,
P. R. China}
\email{jcchen@zjnu.edu.cn, guoenxx@163.com}
\thanks{The research of the first author was supported by the NNSF of
China under grant $\#$11271330, and the research of the second (corresponding) author was supported by
the NNSF of
China under grant $\#$11371370.}

\keywords{ weighted vector-valued inequality, multilinear singular integral operator,  commutator, non-smooth kernel,
multiple weight.}
\subjclass{42B20}

\begin{abstract}
{In this paper, we investigate the weighted vector-valued bounds for a class of multilinear singular integral operators, and its commutators, from
$L^{p_1}(l^{q_1};\,\mathbb{R}^n,w_1)\times\dots\times L^{p_m}(l^{q_m};\,\mathbb{R}^n,w_m)$ to $L^{p}(l^q;\,\mathbb{R}^n,\nu_{\vec{w}})$, with $p_1,\dots,p_m$, $q_1,\,\dots,\,q_m\in (1,\,\infty)$, $1/p=1/p_1+\dots+1/p_m$, $1/q=1/q_1+\dots+1/q_m$ and $\vec{w}=(w_1,\,\dots,\,w_m)$  a multiple $A_{\vec{P}}$ weights. Our argument also leads to the weighted weak type endpoint estimates for the commutators.
}\end{abstract}

\maketitle


\section{Introduction}
In his remarkable work \cite{mu}, Muckenhoupt characterized the class of weights $w$ such that $M$, the Hardy-Littlewood maximal operator, satisfies  the weighted $L^p$  $(p\in (1,\,\infty))$ estimate
\begin{eqnarray}\|Mf\|_{L^{p,\,\infty}(\mathbb{R}^n,\,w)}\lesssim \|f\|_{L^p(\mathbb{R}^n,\,w)}.\end{eqnarray}
The inequality (1.1) holds if and only if $w$ satisfies the $A_p(\mathbb{R}^n)$ condition, that is,
$$[w]_{A_p}:=\sup_{Q}\Big(\frac{1}{|Q|}\int_Qw(x){\rm d}x\Big)\Big(\frac{1}{|Q|}\int_{Q}w^{-\frac{1}{p-1}}(x){\rm d}x\Big)^{p-1}<\infty,$$
where the  supremum is taken over all cubes in $\mathbb{R}^n$, $[w]_{A_p}$ is called the $A_p$ constant of $w$. Also, Muckenhoupt proved that $M$ is bounded on $L^p(\mathbb{R}^n,\,w)$ if and only if $w$ satisfies the $A_p(\mathbb{R}^n)$ condition. Since then, considerable attention has been paid to the theory of $A_p(\mathbb{R}^n)$ and the weighted norm inequalities with $A_p(\mathbb{R}^n)$ weights for main operators in Harmonic Analysis, see \cite[Chapter 9]{gra} and related references therein.

However, the classical results on the weighted norm inequalities with $A_p(\mathbb{R}^n)$ weights did not reflect the quantitative dependence of the $L^p(\mathbb{R}^n,\,w)$ operator norm in terms of the relevant constant involving the weights. The question of the sharp dependence of the weighted estimates in terms of the $A_p(\mathbb{R}^n)$ constant specifically raised by Buckley \cite{bu}, who proved that if $p\in (1,\,\infty)$ and $w\in A_{p}(\mathbb{R}^n)$, then
\begin{eqnarray}\|Mf\|_{L^{p}(\mathbb{R}^n,\,w)}\lesssim_{n,\,p}[w]_{A_p}^{\frac{1}{p-1}}\|f\|_{L^{p}(\mathbb{R}^n,\,w)}.\end{eqnarray}
Moreover, the estimate (1.2) is sharp since the exponent $1/(p-1)$ can not be replaced by a smaller one. Hyt\"onen and P\'erez \cite{hp} improved the estimate (1.4), and showed that
\begin{eqnarray}\|Mf\|_{L^{p}(\mathbb{R}^n,\,w)}\lesssim_{n,\,p}\big([w]_{A_p}[w^{-\frac{1}{p-1}}]_{A_{\infty}}\big)^{\frac{1}{p}}\|f\|_{L^{p}(\mathbb{R}^n,\,w)}.\end{eqnarray}
where and in the following, for a weight $u$, $[u]_{A_{\infty}}$ is defined by
$$[u]_{A_{\infty}}=\sup_{Q\subset \mathbb{R}^n}\frac{1}{u(Q)}\int_{Q}M(u\chi_Q)(x){\rm d}x.$$
It is well known that for $w\in A_p(\mathbb{R}^n)$, $[w^{-\frac{1}{p-1}}]_{A_{\infty}}\lesssim [w]_{A_p}^{\frac{1}{p-1}}$. Thus, (1.3) is more subtle than (1.2).

The sharp dependence of the weighted estimates of  singular integral operators in terms of the $A_p(\mathbb{R}^n)$ constant  was much more complicated.  Petermichl \cite{pet1,pet2} solved this question for Hilbert transform and Riesz transform.   Hyt\"onen \cite{hyt}
proved that  for a  Calder\'on-Zygmund operator $T$ and $w\in A_2(\mathbb{R}^n)$,
\begin{eqnarray}\|Tf\|_{L^{2}(\mathbb{R}^n,\,w)}\lesssim_{n}[w]_{A_2}\|f\|_{L^{2}(\mathbb{R}^n,\,w)}.\end{eqnarray}
This solved  the so-called $A_2$ conjecture. Combining the estimate (1.4) and the extrapolation theorem in \cite{dra}, we know that
for a Calder\'on-Zygmund operator $T$, $p\in (1,\,\infty)$ and $w\in A_p(\mathbb{R}^n)$,
\begin{eqnarray}\|Tf\|_{L^{p}(\mathbb{R}^n,\,w)}\lesssim_{n,\,p}[w]_{A_p}^{\max\{1,\,\frac{1}{p-1}\}}\|f\|_{L^{p}(\mathbb{R}^n,\,w)}.\end{eqnarray}
In \cite{ler2}, Lerner  gave a much simplier proof of (1.5) by  controlling the Calder\'on-Zygmund operator using sparse operators.

Let $K(x;\,y_1,\,\dots,\,y_m)$ be a locally integrable function
defined away from the diagonal $x=y_1=\dots=y_m$ in $\mathbb{R}^{mn}$. An
operator $T$ defined on
$\mathcal{S}(\mathbb{R}^n)\times\dots\times\mathcal{S}(\mathbb{R}^n)$ (Schwartz space)
and taking values in $\mathcal{S}'(\mathbb{R}^n)$, is said to be an
$m$-multilinear singular integral operator with kernel $K$, if $T$ is
$m$-multilinear, and satisfies that
\begin{equation}T(f_1,\,\dots,f_m)(x)=\int_{\mathbb{R}^{mn}}K(x;\,y_1,\,\dots,y_m)\prod_{j=1}^mf_j(y_j){\rm
d}y_1\dots dy_{m},\end{equation} for bounded functions $f_1,\dots,f_m$
with compact supports, and $x\in\mathbb{R}^n\backslash \cap_{j=1}^m{\rm
supp}\,f_j$. Operators of this type were originated in the
remarkable works of Coifman and Meyer \cite{cm1}, \cite{cm2}, and
are useful in multilinear analysis. We say that $T$ is an $m$-linear Calder\'on-Zygmund operator, if $T$ is bounded from $L^{r_1}(\mathbb{R}^n)\times \dots\times L^{r_m}(\mathbb{R}^n)$ to $L^r(\mathbb{R}^n)$ for some $r_1,\,\dots,\,r_m\in (1,\,\infty)$ and $r\in (1/m,\,\infty)$ with $1/r=1/r_1+\dots+1/r_m$, and $K$ is
a multilinear Calder\'on-Zygmund kernel,   that is, $K$ satisfies the size
condition that for all $(x,\,y_1, \dots,y_m)\in \mathbb{R}^{(m+1)n}$ with $x\not
=y_j$ for some $1\leq j\leq m$,
\begin{equation}
|K(x;\,y_1, \dots,y_m)|\lesssim\frac{1}{\left(\sum_{j=1}^m|x-y_j|\right)^{mn}}
\end{equation}
and satisfies the  regularity condition that for some $\alpha\in (0,\,1]$
$$
|K(x;\,y_1, \dots,y_m)-K(x';\,y_1, \dots,y_m)|\lesssim
\frac{|x-x'|^{\alpha}}{\left(\sum_{j=1}^m|x-y_j|\right)^{mn+\alpha}}
$$
whenever $\max_{1\leq k\leq m}|x-y_k|\ge 2|x-x'|$, and for all $1\leq j\leq m$,
$$
\big|K(x;\,y_1, \dots,y_j\dots,y_m)
-K(x;\,y_1,\dots,y_j',\dots,y_m)\big| \lesssim\frac{|y_j-y'_j|^{\alpha}}{\left(\sum_{i=1}^m|x-y_i|\right)^{mn+\alpha}}
$$
whenever $\max_{1\leq k\leq m}|x-y_k|\ge 2|y_j-y_j'|$. Grafakos and Torres \cite{gt2}
considered the behavior of multilinear Calder\'on-Zygmund operators on
$L^{1}(\mathbb{R}^n)\times\dots\times L^{1}(\mathbb{R}^n)$, and established a $T1$ type theorem
for the operator $T$. To consider the weighted estimates for the multilinear Calder\'on-Zygmund operators, Lerner, Ombrossi, P\'erez, Torres and Trojillo-Gonzalez \cite{ler} introduced the following definition.
\begin{definition}\label{d1.1}
Let $m\in\mathbb{N}$, $w_1,\dots,w_m$ be
weights, $p_1,\dots,p_m\in[1,\,\infty)$, $p\in (0,\,\infty)$ with
$1/p=1/p_1+\dots+1/p_m$. Set $\vec{w}=(w_1,\,\dots,\,w_m)$,
$\vec{P}=(p_1,\,...,\,p_m)$ and
$\nu_{\vec{w}}=\prod_{k=1}^{m}w_k^{p/p_k}$. We say that $\vec{w}\in
A_{\vec{P}}(\mathbb{R}^{mn})$ if
$$[\vec{w}]_{A_{\vec{P}}}=\sup_{Q\subset
\mathbb{R}^n}\Big(\frac{1}{|Q|}\int_Q\nu_{\vec{w}}(x)\,{\rm
d}x\Big)
\prod_{k=1}^m\Big(\frac{1}{|Q|}\int_Qw_k^{-\frac{1}{{p_k}-1}}(x)\,{\rm
d}x\Big)^{p/p_k'}<\infty,$$ when
$p_k=1$,
$\Big(\frac{1}{|Q|}\int_Qw_k^{-\frac{1}{p_k-1}}(x)\,{\rm
d}x\Big)^{1-1/p_k}$ is understood as $(\inf_{Q}w_k\big)^{-1}$.\end{definition}

Lerner et al. \cite{ler} proved that if $p_1,\,\dots,\,p_m\in [1,\,\infty)$ and $p\in [1/m,\,\infty)$ with $1/p=1/p_1+\dots+1/p_m$,  and $\vec{w}=(w_1,\,\dots,\,w_m)\in A_{\vec{P}}(\mathbb{R}^{mn})$, then an $m$-linear Calder\'on-Zygmund operator $T$ is bounded
from $L^{p_1}(\mathbb{R}^n,\,w_1)\times \dots\times L^{p_m}(\mathbb{R}^n,\,w_m)$ to $L^{p,\,\infty}(\mathbb{R}^n, \nu_{\vec{w}})$, and when $\min_{1\leq j\le m}p_j>1$, $T$ is bounded from $L^{p_1}(\mathbb{R}^n, w_1)\times \dots\times L^{p_m}(\mathbb{R}^n,w_m)$ to $L^{p}(\mathbb{R}^n,\,\nu_{\vec{w}})$.
Li, Moen and Sun \cite{lms} considered the sharp dependence of the weighted estimates of  multilinear Calder\'on-Zygmund operators in terms of the $A_{\vec{P}}(\mathbb{R}^{mn})$ constant, and proved that
\begin{theorem}\label{t1.0}
Let $T$ be an $m$-linear Calder\'on-Zygmund operator, $p_1,\dots,p_m\in (1,\,\infty)$, $p\in [1,\,\infty)$ such that $1/p=1/p_1+\dots+1/p_m$, $\vec{w}=(w_1,\,\dots,\,w_m)\in A_{\vec{P}}(\mathbb{R}^{mn})$. Then \begin{eqnarray}\qquad\|T(f_1^k,\dots,f_m^k)\|_{L^p(\mathbb{R}^n,\nu_{\vec{w}})}\lesssim[\vec w]_{A_{\vec P}}^{\max \{1, \frac{p_1'}{p},\cdots, \frac{p_m'}{p}\}}\prod_{j=1}^m\|f_j\|_{L^{p_j}
(\mathbb{R}^n,w_j)}.\end{eqnarray}
Moreover, the exponent on $[\vec w]_{A_{\vec P}}$ is sharp.
\end{theorem}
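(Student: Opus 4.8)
The plan is to run the nowadays-standard sparse-domination scheme: dominate $T$ pointwise by a finite sum of $m$-linear sparse operators, prove a mixed $A_{\vec P}$--$A_\infty$ bound for those, convert the $A_\infty$ constants into powers of $[\vec w]_{A_{\vec P}}$ to read off the sharp exponent $\gamma:=\max\{1,p_1'/p,\dots,p_m'/p\}$, and finally check the sharpness of $\gamma$ on power weights.

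First I would invoke, in its $m$-linear form, Lerner's local mean oscillation estimate (equivalently, the stopping-cube constructions of Lerner--Nazarov and Conde-Alonso--Rey): for bounded compactly supported $f_1,\dots,f_m$ there are at most $3^n$ $\eta$-sparse families $\mathcal S$ of dyadic cubes (each $Q\in\mathcal S$ having a pairwise-disjoint major subset $E_Q\subset Q$ with $|E_Q|\ge\eta|Q|$) with
\[
|T(f_1,\dots,f_m)(x)|\ \lesssim\ \sum_{\mathcal S}\,\mathcal A_{\mathcal S}(|f_1|,\dots,|f_m|)(x),\qquad \mathcal A_{\mathcal S}(f_1,\dots,f_m):=\sum_{Q\in\mathcal S}\Big(\prod_{j=1}^m\tfrac1{|Q|}\int_Q f_j\Big)\chi_Q ,
\]
so it suffices to bound each $\mathcal A_{\mathcal S}$ with exponent $\gamma$, uniformly in $\mathcal S$. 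Setting $\sigma_j:=w_j^{1-p_j'}$ (so $\sigma_j^{p_j}w_j=\sigma_j$) and replacing $f_j$ by $f_j\sigma_j$ reduces this to $\big\|\sum_{Q\in\mathcal S}\prod_j\langle f_j\sigma_j\rangle_Q\chi_Q\big\|_{L^p(\nu_{\vec w})}\lesssim[\vec w]_{A_{\vec P}}^\gamma\prod_j\|f_j\|_{L^{p_j}(\sigma_j)}$; for $p>1$, dualizing the left side against $0\le g\in L^{p'}(\nu_{\vec w})$ of unit norm and expanding, it becomes the $(m{+}1)$-linear form estimate
\[
\sum_{Q\in\mathcal S}|Q|\,\langle g\nu_{\vec w}\rangle_Q\prod_{j=1}^m\langle f_j\sigma_j\rangle_Q\ \lesssim\ [\vec w]_{A_{\vec P}}^\gamma\,\|g\|_{L^{p'}(\nu_{\vec w})}\prod_{j=1}^m\|f_j\|_{L^{p_j}(\sigma_j)} .
\]

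To handle the form I would split each average as a weight-average times a weighted average, $\langle f_j\sigma_j\rangle_Q=\langle\sigma_j\rangle_Q\langle f_j\rangle^{\sigma_j}_Q$ and $\langle g\nu_{\vec w}\rangle_Q=\langle\nu_{\vec w}\rangle_Q\langle g\rangle^{\nu_{\vec w}}_Q$ (with $\langle h\rangle^u_Q:=u(Q)^{-1}\int_Qhu$), so that the scalar part of the $Q$-term is $|Q|\,\langle\nu_{\vec w}\rangle_Q\prod_j\langle\sigma_j\rangle_Q$. Then I would run $m{+}1$ parallel stopping-time (corona) decompositions — one for the $\langle g\rangle^{\nu_{\vec w}}_Q$'s and one for each family $\langle f_j\rangle^{\sigma_j}_Q$ — and sum with the dyadic Carleson embedding theorem $\sum_{Q\in\mathcal S}\big(\langle h\rangle^u_Q\big)^r u(E_Q)\lesssim_r\int h^ru\,dx$, using the $A_{\vec P}$ inequality $\langle\nu_{\vec w}\rangle_Q\prod_j\langle\sigma_j\rangle_Q^{p/p_j'}\le[\vec w]_{A_{\vec P}}$ to absorb the masses $\nu_{\vec w}(Q),\sigma_j(Q)$ left over after the embeddings (sparseness turning $|Q|$ into $|E_Q|$ and each mass over $Q$ into a comparable mass over $E_Q$, at the cost of the relevant $A_\infty$ constant). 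This should produce a mixed estimate of the shape
\[
\Big\|\sum_{Q\in\mathcal S}\prod_j\langle f_j\sigma_j\rangle_Q\,\chi_Q\Big\|_{L^p(\nu_{\vec w})}\ \lesssim\ [\vec w]_{A_{\vec P}}^{1/p}\Big([\nu_{\vec w}]_{A_\infty}^{1/p'}+\sum_{j=1}^m[\sigma_j]_{A_\infty}^{1/p_j}\Big)\prod_{j=1}^m\|f_j\|_{L^{p_j}(\sigma_j)} ,
\]
after which the quantitative $A_{\vec P}$ characterization — $[\nu_{\vec w}]_{A_\infty}\le[\nu_{\vec w}]_{A_{mp}}\le[\vec w]_{A_{\vec P}}$ and $[\sigma_j]_{A_\infty}\lesssim[\sigma_j]_{A_{mp_j'}}\lesssim[\vec w]_{A_{\vec P}}^{p_j'/p}$ — makes the first summand contribute $[\vec w]_{A_{\vec P}}^1$ and the $j$-th contribute $[\vec w]_{A_{\vec P}}^{p_j'/p}$, giving $\lesssim[\vec w]_{A_{\vec P}}^\gamma$. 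The endpoint $p=1$ I would treat directly, with no duality, on $\|\mathcal A_{\mathcal S}(f_1\sigma_1,\dots,f_m\sigma_m)\|_{L^1(\nu_{\vec w})}=\sum_{Q\in\mathcal S}\nu_{\vec w}(Q)\prod_j\langle f_j\sigma_j\rangle_Q$.

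For the sharpness of $\gamma$ I would argue in Buckley's style: choose radial power weights $w_j(x)=|x|^{\alpha_jn}$ with exponents approaching the boundary of the $A_{\vec P}$ range so that $[\vec w]_{A_{\vec P}}\to\infty$, test $T$ on functions like $f_{j_0}=|x|^{\beta n}\chi_{B(0,1)}$ and $f_j=\chi_{B(0,1)}$ for $j\ne j_0$, where $j_0$ realizes the maximum defining $\gamma$, and match the blow-up rates of the two sides. I expect the main obstacle to be the mixed-bound step: not getting merely \emph{a} weighted estimate but extracting the \emph{exact} exponent, which forces one to arrange the $m{+}1$ parallel coronas so that each Carleson embedding runs in its correct $L^r$-range and every leftover factor — in particular every auxiliary $A_\infty$ constant — is tracked and then re-expressed through $[\vec w]_{A_{\vec P}}$; with $m{+}1$ competing contributions (whose competition is exactly what yields a maximum rather than a single power), this bookkeeping is markedly more delicate than in the linear case.
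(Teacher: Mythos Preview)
Your proposal is sound, but note that the paper does not give its own proof of this statement: Theorem~\ref{t1.0} is quoted from Li--Moen--Sun \cite{lms} as background, and the paper's contribution is the generalization in Theorem~\ref{t1.1}. That said, the route the paper takes to Theorem~\ref{t1.1} (which contains Theorem~\ref{t1.0} as the scalar special case) matches the first half of your scheme exactly --- pointwise sparse domination (Theorem~\ref{t3.1}, via Lemmas~\ref{l3.1} and \ref{l3.2}) followed by the sharp weighted bound (2.4) for the multilinear sparse operator $\mathcal A_{m;\mathcal S}$ --- but then simply \emph{cites} (2.4) from \cite{lms} rather than re-proving it. You go further than the paper by sketching how to obtain the sparse bound itself: a mixed $A_{\vec P}$--$A_\infty$ estimate via $m{+}1$ parallel corona decompositions and Carleson embeddings, and then recovering the exponent $\max\{1,p_1'/p,\dots,p_m'/p\}$ by bounding the auxiliary $A_\infty$ constants in terms of $[\vec w]_{A_{\vec P}}$. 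That is essentially the argument of \cite{lms} (and of the later refinements such as \cite{dhli}), so your plan is faithful to the sources the paper relies on; the sharpness construction via power weights is likewise the one in \cite{lms}, which the paper does not reproduce either.
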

Conde-Alongso and Rey \cite{car} proved that the conclusion in Theorem \ref{t1.0} is still true for the case $p\in (1/m,\,1)$.
For other works about the weighted estimates of multilinear Calder\'on-Zygmund operators, see  \cite{ls,bcd,dhli} and references therein.

To consider the mapping properties for the commutator of Calder\' on, Duong, Grafakos and Yan \cite{dgy} introduced a class
of multilinear singular integral operators via the following generalized approximation to the identity.
\begin{definition}
A family of operators $\{A_t\}_{t>0}$ is said to be an approximation to the identity, if for
every $t>0$, $A_t$ can be represented by the kernel at in the following sense: for every function $u\in L^p(\mathbb{R}^n)$
with $p\in [1,\,\infty]$ and a. e. $x\in\mathbb{R}^n$,
$$A_tu(x)=\int_{\mathbb{R}^n}a_t(x,\,y)u(y){\rm d}y,$$
and the kernel $a_t$ satisfies that for all $x,\,y\in\mathbb{R}^n$ and $t>0$,
\begin{eqnarray}|a_t(x,\,y)|\le h_t(x,\,y)=t^{-n/s}h\Big(\frac{|x-y|}{t^{1/s}}\Big),\end{eqnarray}
where $s>0$ is a constant and $h$ is a positive, bounded and decreasing function such that for some constant $\eta>0$,
\begin{eqnarray}\lim_{r\rightarrow\infty}r^{n+\eta}h(r)=0.\end{eqnarray}
\end{definition}

\begin{assumption}\label{a1.1}
For each fixed $j$ with $1\leq j\leq m$, there exists an approximation to the identity
$\{A_t^j\}_{t>0}$ with kernels $\{a_t^j(x,\,y)\}_{t>0}$, and there exist kernels $K_t^j(x;\,y_1,\dots,\,y_m)$, such that for bounded
functions $f_1,\,\dots,\,f_m$ with compact supports, and $x\in\mathbb{R}^n\backslash\cap_{k=1}^m{\rm supp}\,f_k$,
$$T(f_1,\dots,f_{j-1},A_t^jf_j,f_{j+1}\dots,f_m)(x)=
\int_{\mathbb{R}^{nm}}K_t^j(x;\,y_1,\dots,y_m)
\prod_{k=1}^mf_k(y_k){\rm d}\vec{y},$$
and there exists a function $\phi\in C(\mathbb{R})$ with ${\rm supp}\,\phi\subset[-1,\,1]$, and a constant $\varepsilon\in (0,\,1]$, such that for all $x,\,y_1,\,\dots,\,y_m\in\mathbb{R}^n$ and all $t>0$ with $2t^{1/s}\leq |x-y_j|$,
\begin{eqnarray*}&&|K(x;\,y_1,\,\dots,\,y_m)-K_t^j(x;\,y_1,\,\dots,\,y_m)|\\
&&\quad\lesssim\frac{t^{\varepsilon/s}}{(\sum_{k=1}^m|x-y_k|)^{mn+\varepsilon}}+\frac{1}{(\sum_{k=1}^m|x-y_k|)^{mn}}\sum_{1\leq i\leq m,\,i\not =j}\phi\Big(\frac{|y_i-y_j|}{t^{1/s}}\Big).
\end{eqnarray*}
\end{assumption}
As it was pointed out in \cite{dgy}, an operator with such  a kernel is called a multilinear singular integral operator with non-smooth kernel, since the kernel $K$  may enjoy  no smoothness in the variables $y_1\,\dots,y_m$. Also, it was pointed out in \cite{dgy} that if
$T$ is an $m$-linear Calder\'on-Zygmund operator, then $T$ also satisfies Assumption \ref{a1.1}.
Duong, Grafakos and Yan \cite{dgy} proved that if $T$ satisfies Assumption \ref{a1.1}, and is bounded from $L^{r_1}(\mathbb{R}^n)\times \dots\times L^{r_m}(\mathbb{R}^n)$ to $L^{r,\,\infty}(\mathbb{R}^n)$ for some $r_1,\,\dots,\,r_m\in (1,\,\infty)$ and $r\in (1/m,\,\infty)$ with $1/r=1/r_1+\dots+1/r_m$, then $T$ is also bounded from $L^{1}(\mathbb{R}^n)\times \dots\times L^{1}(\mathbb{R}^n)$ to $L^{1/m,\,\infty}(\mathbb{R}^n)$. Recently, Hu and Li \cite{huli} considered the mapping properties from  $L^{1}(l^{q_1};\,\mathbb{R}^n)\times \dots\times L^{1}(l^{q_m};\,\mathbb{R}^n)$ to $L^{1/m,\,\infty}(l^q;\,\mathbb{R}^n)$ for the multilinear operator which satisfies
Assumption \ref{a1.1}.

The first purpose of this paper is to give an extension of Theorem \ref{t1.0} to the operators satisfying Assumption \ref{a1.1}. We further assume the kernel $K$ satisfies the following regularity condition: for $x,\, x',\, y_1,\,\dots,\,y_m\in\mathbb{R}^n$ with $8|x-x'|<\min_{1\leq j\leq m}|x-y_j|$, and each number $D$ such that $2|x-x'|<D$ and $4D<\min_{1\leq j\leq m}|x-y_j|$,
\begin{eqnarray}\quad|K(x; y_1,\,\dots,\, y_m)-K(
x';\,y_1,\,\dots,\,y_m)| \lesssim\frac{D^{\gamma}}{
\big(\sum_{j=1}^m|x-y_j|)^{nm+\gamma}}.\end{eqnarray}
This condition was introduced  in \cite{huz}, in order to established the weighted estimates for multilinear singular integral operators with non-smooth kernels. As it was pointed out in \cite{huz}, the operators considered in \cite{dggy,gly} also satisfies Assumption \ref{a1.1} and (1.11). On the other hand, it is obvious that if $T$ is an $m$-linear Calder\'on-Zygmund operator, then $T$ also satisfies (1.11). Thus, the operators we consider here contain multilinear Calder\'on-Zygmund operators and multilinear
singular integral operators with non-smooth kernels.
To state our  results, we  first recall some notations.

Let $p,\,r\in(0,\,\infty]$ and $w$ be a weight. As usual, for a sequence of numbers $\{a_k\}_{k=1}^{\infty}$, we denote $\|\{a_k\}\|_{l^r}=\big(\sum_k|a_k|^r\big)^{1/r}$. The space $L^p(l^{r};\,\mathbb{R}^n,\,w)$ is defined as
$$L^p(l^{r};\,\mathbb{R}^n,\,w)=\big\{\{f_k\}_{k=1}^{\infty}:\, \|\{f_k\}\|_{L^p(l^r;\,\mathbb{R}^n,\,w)}<\infty\big\}$$
where
$$\|\{f_k\}\|_{L^p(l^r;\,\mathbb{R}^n,\,w)}=\Big(\int_{\mathbb{R}^n}\|\{f_k(x)\}\|_{l^r}^pw(x)\,{\rm d}x\Big)^{1/p}.$$
The space $L^{p,\,\infty}(l^{r};\,\mathbb{R}^n,\,w)$ is defined as
$$L^{p,\,\infty}(l^{r};\,\mathbb{R}^n,\,w)=\big\{\{f_k\}_{k=1}^{\infty}:\, \|\{f_k\}\|_{L^{p,\,\infty}(l^r;\,\mathbb{R}^n,\,w)}<\infty\big\}$$
with
$$\|\{f_k\}\|_{L^{p,\,\infty}(l^r;\,\mathbb{R}^n,\,w)}^p=\sup_{\lambda>0}\lambda^pw\Big(\Big\{x\in\mathbb{R}^n:\,
\|\{f_k(x)\}\|_{l^r}>\lambda\Big\}\Big).$$ When $w\equiv 1$, we denote  $\|\{f_k\}\|_{L^p(l^r;\,\mathbb{R}^n,\,w)}$ ($\|\{f_k\}\|_{L^{p,\infty}(l^r;\,\mathbb{R}^n,\,w)}$)
by $\|\{f_k\}\|_{L^p(l^r;\,\mathbb{R}^n)}$ ($\|\{f_k\}\|_{L^{p,\infty}(l^r;\,\mathbb{R}^n)}$) for simplicity.

Our first result can be stated as follows.
\begin{theorem}\label{t1.1}
Let $m\geq 2$, $T$ be an $m$-linear operator with kernel $K$  in the sense of (1.6), $r_1,\,\dots\,r_m\in (1,\,\infty)$, $r\in (0,\,\infty)$ such that $1/r=1/r_1+\dots+1/r_m$. Suppose that
\begin{itemize}
\item[\rm (i)]  $T$ is bounded from $L^{r_1}(\mathbb{R}^n)\times\dots\times L^{r_m}(\mathbb{R}^n)$ to $L^r(\mathbb{R}^n)$;
\item[\rm (ii)] The kernel $K$ satisfies size conditon (1.7) and regular condition (1.11);
\item[\rm (iii)] $T$ satisfies the Assumption \ref{a1.1}.
\end{itemize}
Let $p_1,\dots,p_m,\,q_1,\,\dots,q_m\in (1,\,\infty)$, $p,\,q\in (\frac{1}{m},\infty)$ such that $1/p=1/p_1+\dots+1/p_m$, $1/q=1/q_1+\dots+1/q_m$, $\vec{w}=(w_1,\,\dots,\,w_m)\in A_{\vec{P}}(\mathbb{R}^{mn})$. Then \begin{eqnarray}&&\|\{T(f_1^k,\dots,f_m^k)\}\|_{L^p(l^q;\mathbb{R}^n,\nu_{\vec{w}})}\lesssim[\vec w]_{A_{\vec P}}^{\max \{1, \frac{p_1'}{p},\cdots, \frac{p_m'}{p}\}}\prod_{j=1}^m\|\{f_j^k\}\|_{L^{p_j}
(l^{q_j};\mathbb{R}^n,w_j)}.\end{eqnarray}
\end{theorem}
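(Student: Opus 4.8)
The plan is to establish a sparse domination for $T$ in the vector-valued sense and then invoke the known sharp weighted bound for sparse operators, which is what produces the exponent $\max\{1,p_1'/p,\dots,p_m'/p\}$ on $[\vec w]_{A_{\vec P}}$. Write $\langle g\rangle_Q=|Q|^{-1}\int_Q|g|$, and for a sparse family $\mathcal S$ of cubes set $\mathcal A_{\mathcal S}(g_1,\dots,g_m)=\sum_{Q\in\mathcal S}\big(\prod_{j=1}^m\langle g_j\rangle_Q\big)\chi_Q$. Given a sequence $\{(f_1^k,\dots,f_m^k)\}_{k\ge1}$ of bounded, compactly supported tuples, put $F_j=\big(\sum_k|f_j^k|^{q_j}\big)^{1/q_j}$. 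The first and main step is to produce a sparse family $\mathcal S$ for which
\[
\big\|\{T(f_1^k,\dots,f_m^k)(x)\}_k\big\|_{l^q}\lesssim\mathcal A_{\mathcal S}(F_1,\dots,F_m)(x)\qquad\text{for a.e. }x\in\mathbb R^n .
\]

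To obtain this I would follow Lerner's grand maximal truncated operator method (as in \cite{ler2}) in the vector-valued setting. Let
\[
\mathcal M_T(\{\vec f^k\})(x)=\sup_{Q\ni x}\mathop{\mathrm{ess\,sup}}_{\xi\in Q}\big\|\{T(f_1^k\chi_{\mathbb R^n\setminus 3Q},\dots,f_m^k\chi_{\mathbb R^n\setminus 3Q})(\xi)\}_k\big\|_{l^q}.
\]
Hypotheses (i) and (iii), together with the Duong--Grafakos--Yan machinery for multilinear operators with non-smooth kernels and its vector-valued refinement due to Hu--Li \cite{dgy,huli}, give that $T$ and $\mathcal M_T$ both map $L^1(l^{q_1};\mathbb R^n)\times\dots\times L^1(l^{q_m};\mathbb R^n)$ into $L^{1/m,\infty}(l^q;\mathbb R^n)$; this is the place where Assumption~\ref{a1.1} and the associated approximation to the identity are used. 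A stopping-time construction driven by $\mathcal M_T$ then yields the sparse family, once one controls, on each stopping cube $Q_0$, the vector-valued local oscillation of $T(f_1^k,\dots,f_m^k)$ by $\prod_{j=1}^m\langle F_j\rangle_{3Q_0}$ together with rapidly decaying averages over the dyadic dilates of $Q_0$. For the contributions in which every input is restricted to $(3Q_0)^c$ this comes from the regularity condition (1.11), which involves only the first variable $x$ and hence requires no smoothness of $K$ in $y_1,\dots,y_m$; the remaining ``mixed'' contributions are treated by first replacing each input piece localized near $Q_0$, say $f_j^k\chi_{3Q_0}$, by $A_t^jf_j^k\chi_{3Q_0}$ with $t\sim\ell(Q_0)^s$, estimating the error by means of the pointwise bound in Assumption~\ref{a1.1}, and then handling the smoothed term. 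Reconciling the lack of $y$-regularity of $K$ with the need for a genuine pointwise, $l^q$-valued sparse bound is the main obstacle, and is in essence the mechanism behind the weighted estimates of \cite{huz}.

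When $q\ge1$ this step simplifies: one may apply the scalar pointwise domination $|T(g_1,\dots,g_m)(x)|\lesssim\mathcal A_{\mathcal S}(g_1,\dots,g_m)(x)$ with a common sparse family for the sequence, and then the triangle inequality in $l^q$, Hölder's inequality for series with the exponents $q_j/q$ (note $\sum_{j=1}^m q/q_j=1$), and Minkowski's inequality for integrals in $l^{q_j}$ turn the right-hand side into $\mathcal A_{\mathcal S}(F_1,\dots,F_m)(x)$. For $q<1$, which is possible since $m\ge2$, the $l^q$ triangle inequality is unavailable, so one keeps the $l^q$-valued sparse operator through the whole stopping-time argument --- exactly as \cite{huli} does at the weak-type endpoint --- and then estimates it on $L^p(l^q;\mathbb R^n,\nu_{\vec w})$ by the argument used for the scalar case below.

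It remains to recall the sharp scalar weighted bound for sparse operators: for $\vec w\in A_{\vec P}(\mathbb R^{mn})$ and the exponents of the statement,
\[
\|\mathcal A_{\mathcal S}(g_1,\dots,g_m)\|_{L^p(\mathbb R^n,\nu_{\vec w})}\lesssim[\vec w]_{A_{\vec P}}^{\max\{1,p_1'/p,\dots,p_m'/p\}}\prod_{j=1}^m\|g_j\|_{L^{p_j}(\mathbb R^n,w_j)},
\]
with implicit constant independent of $\mathcal S$; this is the sparse form of Theorem~\ref{t1.0}, valid on the whole range $p\in(1/m,\infty)$ (Li--Moen--Sun \cite{lms}, Conde-Alonso--Rey \cite{car}). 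Applying it with $g_j=F_j$ and combining with the previous steps gives the estimate (1.12) with the asserted exponent on $[\vec w]_{A_{\vec P}}$.
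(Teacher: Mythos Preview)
Your overall strategy---vector-valued sparse domination via Lerner's grand maximal truncation $\mathcal{M}_T$, followed by the Li--Moen--Sun/Conde-Alonso--Rey bound (2.4)---is exactly the paper's route, and the conclusion is correctly assembled from those two pieces. However, two details in your sketch are misplaced.

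First, you locate the use of Assumption~\ref{a1.1} in the wrong place. In the paper the pointwise control of $\mathcal{M}_T$ (Lemma~\ref{l3.2}) never invokes the approximation to the identity: the ``all outside'' piece uses only the $x$-regularity (1.11), and every mixed term such as $T(f_1\chi_{3Q_0},f_2\chi_{\mathbb{R}^n\setminus 3Q_0})(\xi)$ is estimated directly from the \emph{size} condition (1.7), yielding $\prod_j Mf_j(x)$. There is no need to replace $f_j\chi_{3Q_0}$ by $A_t^jf_j$ or to extract ``rapidly decaying averages''. Assumption~\ref{a1.1} enters only indirectly, through the cited results of \cite{dgy} and \cite{huli}, to supply the weak-type endpoint $L^1(l^{q_1})\times\cdots\times L^1(l^{q_m})\to L^{1/m,\infty}(l^q)$ for $T$ itself; once that is in hand, the scalar pointwise inequality (3.2), $\mathcal{M}_T(f_1,\dots,f_m)\lesssim M_\tau(T(f_1,\dots,f_m))+\prod_j Mf_j$, combined with Lemma~\ref{l3.0}, gives the same weak-type bound for $\mathcal{M}_T$ without further use of the non-smooth machinery.

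Second, your proposed simplification for $q\ge 1$ does not work as stated: the sparse family in the scalar domination $|T(g_1,\dots,g_m)|\lesssim\mathcal{A}_{\mathcal{S}}(g_1,\dots,g_m)$ depends on the functions, so one cannot assume a \emph{common} $\mathcal{S}$ across the sequence $\{(f_1^k,\dots,f_m^k)\}_k$ and then combine via H\"older. The paper avoids this by running the stopping-time argument directly on the $l^q$-norm $\|\{T(f_1^k\chi_{3Q_0},\dots,f_m^k\chi_{3Q_0})\}\|_{l^q}$ (Theorem~\ref{t3.1}), uniformly in $q\in(1/m,\infty)$, which is what your ``$q<1$'' paragraph describes; no case distinction is needed.
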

\begin{remark}
As we pointed out,  operators in Theorem \ref{t1.1} contain multilinear Calder\'on-Zygmund operators as examples. This, together with the examples in \cite{lms}, shows that the estimate (1.12) is sharp.
\end{remark}
Now let $b$ be a locally integrable function. For $1\leq j\leq m$, define the commutator $[b,\,T]_j$ by
$$[b,\,T]_j(\vec{f})(x)=b(x)T(f_1,\,\dots,\,f_m)(x)-T(f_1,\,\dots,\,f_{j-1},bf_j,\,f_{j+1},\,\dots,\,f_m)(x).$$
Let $b_1,\,\dots,\,b_m$ be locally integrable functions and   $\vec{b}=(b_1,\,\dots,\,b_m)$. The multilinear commutator of $T$ and $\vec{b}$ is defined by
\begin{eqnarray}T_{\vec{b}}(f_1,\dots,f_m)(x)=\sum_{j=1}^m[b_j,\,T]_j(f_1,\dots,f_m)(x).\end{eqnarray}
As it was showed in \cite{chpp,hp,dhli}, by the conclusion (1.12), we can prove that, under the hypothesis of Theorem \ref{t1.1},  for $p_1,\,\dots,\,p_m,\,p\in (1,\,\infty)$ and $\vec{w}\in A_{\vec{P}}(\mathbb{R}^{mn})$,
\begin{eqnarray}\qquad\|T_{\vec{b}}(f_1,\dots,f_m)\|_{L^p(\mathbb{R}^n,\,\nu_{\vec{w}})}&\lesssim &\|\vec{b}\|_{{\rm BMO}(\mathbb{R}^n)}[\vec{w}]_{A_p}^{\max\{1,\frac{p_1'}{p},\dots,\frac{p_m'}{p}\}}\\
&&\times\big([\nu_{\vec{w}}]_{A_{\infty}}+\sum_{j=1}^{m}[\sigma_j]_{A_{\infty}}\big)\prod_{j=1}^m\|f_j\|_{L^{p_j}(\mathbb{R}^n,\,w_j)}.\nonumber
\end{eqnarray}
However, for the case of $p\in (0,\,1)$, we do not know if we can deduce the weighted estimate for $T_{\vec{b}}$ like (1.14) from (1.12), the argument used in \cite{chpp,hp,dhli} does not apply.

\begin{definition} Let $s\in[1,\,\infty)$. A measurable function $b$ on $\mathbb{R}^n$ is said to belong to the space $Osc_{{\rm exp}\,L^s}(\mathbb{R}^n)$, if $\|b\|_{Osc_{{\rm exp}\,L^s}(\mathbb{R}^n)}<\infty$, with$$\|b\|_{Osc_{{\rm exp}\,L^s}(\mathbb{R}^n)}=\inf\Big\{C>0:\,\sup_{Q\subset \mathbb{R}^n}\frac{1}{|Q|}\int_{Q}
{\rm exp}\Big(\frac{|b(x)-\langle b\rangle_Q|}{C}\Big)^s{\rm d}x\leq 2\Big\},$$where and in the following, $\langle b\rangle_Q=\frac{1}{|Q|}\int_{Q}b(y){\rm d}y.$
\end{definition}

For details of this space, see \cite{perez2}. We remark that $Osc_{{\rm exp}\,L^1}(\mathbb{R}^n)={\rm BMO}(\mathbb{R}^n).$

Our  result concerning the weighted bound of $T_{\vec{b}}$  can be stated as follows.

\begin{theorem}\label{t1.2} Let $T$ be an $m$-linear operator as in Theorem \ref{t1.1} and $T_{\vec{b}}$  the commutator defined by (1.13).  Then for $p_1,\,\dots,\,p_m;\,q_1,\,\dots,\,q_m\in (1,\,\infty)$, $p,\,q\in (1/m,\,\infty)$ with $1/p=1/p_1+\dots+1/p_m$ and $1/q=1/q_1+\dots+1/q_m$, and $\vec{w}\in A_{\vec{P}}(\mathbb{R}^{mn})$,
\begin{eqnarray}&&\|\{T_{\vec{b}}(f_1^k,\dots,f_m^k)\}\|_{L^p(l^q;\,\mathbb{R}^n,\nu_{\vec{w}})}\lesssim\big(\sum_{j=1}^m\|b_j\|_{Osc_{{\rm exp}L^{s_j}}(\mathbb{R}^n)}\big)[\vec w]_{A_{\vec P}}^{\max \{1, \frac{p_1'}{p},\cdots, \frac{p_m'}{p}\}}\\
&&\quad\qquad\times\Big([\nu_{\vec{w}}]_{A_{\infty}}^{\frac{1}{s_*}}+\sum_{i=1}^m[\sigma_i]_{A_{\infty}}^{\frac{1}{s_i}}\Big)
\prod_{j=1}^m\|f_j^k\|_{L^{p_j}
(l^{q_j};\,\mathbb{R}^n,w_j)},\nonumber\end{eqnarray}
where and in the following, $\sigma_j(x)=w_j^{-\frac{1}{p_j-1}}(x), $ $s_*=\min_{1\leq i\leq m}s_i$.
\end{theorem}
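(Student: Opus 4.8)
The plan is to follow the route used for Theorem~\ref{t1.1}: to dominate the vector-valued commutator pointwise by multilinear sparse operators and then to prove weighted bounds for those operators, the new input being the oscillation of the symbols $b_j$. By (1.13) and the triangle inequality in $L^p(l^q;\,\mathbb{R}^n,\nu_{\vec w})$ it suffices to fix $j\in\{1,\dots,m\}$ and bound $\big\|\{[b_j,T]_j(\vec f^k)\}\big\|_{L^p(l^q;\,\mathbb{R}^n,\nu_{\vec w})}$ by
\[
\|b_j\|_{Osc_{{\rm exp}L^{s_j}}(\mathbb{R}^n)}\,[\vec w]_{A_{\vec P}}^{\max\{1,p_1'/p,\dots,p_m'/p\}}\Big([\nu_{\vec w}]_{A_\infty}^{1/s_j}+[\sigma_j]_{A_\infty}^{1/s_j}\Big)\prod_{i=1}^m\big\|\{f_i^k\}\big\|_{L^{p_i}(l^{q_i};\,\mathbb{R}^n,w_i)};
\]
summing in $j$ and using $[\nu_{\vec w}]_{A_\infty}\geq 1$, $[\sigma_i]_{A_\infty}\ge1$ then produces the factor $[\nu_{\vec w}]_{A_\infty}^{1/s_*}+\sum_{i=1}^m[\sigma_i]_{A_\infty}^{1/s_i}$ of (1.15). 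Throughout, for a fixed index $k$ one exploits the elementary identity
\[
[b_j,T]_j(\vec f^k)(x)=\big(b_j(x)-\lambda\big)T(\vec f^k)(x)-T\big(f_1^k,\dots,(b_j-\lambda)f_j^k,\dots,f_m^k\big)(x),
\]
valid for $x$ off the supports and every constant $\lambda$, which, with the choice $\lambda=\langle b_j\rangle_Q$, localizes the oscillation of $b_j$ cube by cube.

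The first main step is a pointwise sparse domination for the scalar commutator. Starting from the grand maximal truncated operator associated with $T$ that underlies the proof of Theorem~\ref{t1.1} --- which, because $K$ is only a non-smooth kernel in the sense of Assumption~\ref{a1.1} and the regularity condition (1.11), has to be taken with $L^{r}$-type averages rather than with $L^1$-averages --- one runs Lerner's local oscillation formula and the standard stopping-time iteration to produce finitely many sparse families $\mathcal S$ with
\[
\big|[b_j,T]_j(\vec f)(x)\big|\lesssim \sum_{\mathcal S}\mathcal A^{j}_{\mathcal S}(\vec f)(x)+\sum_{\mathcal S}\mathcal A^{j,*}_{\mathcal S}(\vec f)(x),
\]
where (with $b=b_j$ understood)
\[
\mathcal A^{j}_{\mathcal S}(\vec f)(x)=\sum_{Q\in\mathcal S}\big|b(x)-\langle b\rangle_{Q}\big|\prod_{i=1}^{m}\Big(\frac1{|Q|}\int_{Q}|f_i|\Big)\chi_Q(x),
\]
\[
\mathcal A^{j,*}_{\mathcal S}(\vec f)(x)=\sum_{Q\in\mathcal S}\Big(\frac1{|Q|}\int_{Q}|b-\langle b\rangle_Q|\,|f_j|\Big)\prod_{i\neq j}\Big(\frac1{|Q|}\int_{Q}|f_i|\Big)\chi_Q(x)
\]
(the $L^1$-averages being replaced by $L^{r_i}$-averages in those slots where the non-smoothness of $K$ forces it). The extra term of Assumption~\ref{a1.1} carrying the factors $\phi(|y_i-y_j|/t^{1/s})$ is dealt with as in Theorem~\ref{t1.1}: on a cube $Q$ one chooses $t\sim\ell(Q)^{s}$, after which it is absorbed into the maximal truncated operator.

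One then passes to the vector-valued setting: applying the scalar domination to every tuple $\vec f^k$, taking $l^q$-norms in $k$, and using the generalized H\"older inequality with $1/q=\sum_i 1/q_i$, the problem reduces to estimating $\big\|\{\mathcal A^{j}_{\mathcal S}(\vec f^k)\}\big\|_{L^p(l^q;\nu_{\vec w})}$ and $\big\|\{\mathcal A^{j,*}_{\mathcal S}(\vec f^k)\}\big\|_{L^p(l^q;\nu_{\vec w})}$ by the quantity displayed in the first paragraph. For $\mathcal A^{j,*}_{\mathcal S}$ one invokes the John--Nirenberg inequality for $Osc_{{\rm exp}L^{s_j}}(\mathbb{R}^n)$ together with the Orlicz duality $\exp L^{s_j}\leftrightarrow L(\log L)^{1/s_j}$ to bound $\frac1{|Q|}\int_Q|b_j-\langle b_j\rangle_Q|\,|f_j|$ by $\|b_j\|_{Osc_{{\rm exp}L^{s_j}}}$ times the Orlicz average $\|f_j\|_{L(\log L)^{1/s_j},\,Q}$; this turns $\mathcal A^{j,*}_{\mathcal S}$ into an ordinary multilinear sparse operator with one Orlicz slot, to which the weighted $L^p(l^q)$ estimate with the exponent $\max\{1,p_1'/p,\dots,p_m'/p\}$ on $[\vec w]_{A_{\vec P}}$ obtained in the proof of Theorem~\ref{t1.1} applies, the passage from the Orlicz average of $f_j$ back to its (weighted) $L^{r_i}$-average costing exactly the factor $[\sigma_j]_{A_\infty}^{1/s_j}$ via the quantitative reverse-H\"older estimate $\|\sigma_j\|_{L(\log L)^{1/s_j},\,Q}\lesssim[\sigma_j]_{A_\infty}^{1/s_j}\langle\sigma_j\rangle_Q$. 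For $\mathcal A^{j}_{\mathcal S}$ one splits, as in \cite{chpp,perez2,dhli}, the outer factor $|b_j(x)-\langle b_j\rangle_Q|$ according to whether it is $\lesssim\|b_j\|_{Osc_{{\rm exp}L^{s_j}}}(\log(\cdots))^{1/s_j}$ or not: the main part is again an ordinary multilinear sparse operator (now acting on $M_{L(\log L)^{1/s_j}}$-type pieces), while the tail is summable by the sparseness, the net cost being the factor $[\nu_{\vec w}]_{A_\infty}^{1/s_j}$. Combining the two contributions yields the bound of the first paragraph, hence (1.15).

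The main obstacle is the first step, the sparse domination of $[b_j,T]_j$ under the sole hypotheses of Assumption~\ref{a1.1} and (1.11): one cannot use the usual $L^1$-based grand maximal truncated operator, so Lerner's iteration must be carried out with $L^{r}$-averages, and the non-smooth error term $\sum_{i\neq j}\phi(|y_i-y_j|/t^{1/s})$ must be controlled simultaneously with the modification $(b_j-\langle b_j\rangle_Q)f_j$ of the $j$-th entry. The other delicate point is the bookkeeping of constants: one has to verify that the logarithmic Orlicz maximal functions generated by the $Osc_{{\rm exp}L^{s_j}}$ symbols cost exactly one power $1/s_j$ of the relevant $A_\infty$ constant and do not inflate the $[\vec w]_{A_{\vec P}}$-exponent beyond $\max\{1,p_1'/p,\dots,p_m'/p\}$ --- which for $p\le 1$ is precisely the step that cannot be reached from (1.12) by the conjugation / Cauchy-integral method of \cite{chpp,hp,dhli} and forces the direct sparse argument.
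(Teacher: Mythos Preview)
Your overall strategy---sparse domination of the commutator followed by weighted bounds for the two resulting sparse forms---is the paper's strategy too, and your treatment of $\mathcal A^{j,*}_{\mathcal S}$ (Orlicz--H\"older plus quantitative reverse H\"older to extract $[\sigma_j]_{A_\infty}^{1/s_j}$) is exactly Theorem~\ref{t2.1}. But two points need to be corrected.

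\medskip

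\textbf{The passage to the vector-valued setting does not work as you describe.} You propose to ``apply the scalar domination to every tuple $\vec f^{\,k}$, taking $l^q$-norms in $k$''. The sparse family produced by Lerner's iteration depends on the input functions, so for each $k$ you obtain a \emph{different} family $\mathcal S_k$, and $\big\|\{\mathcal A^{j}_{\mathcal S_k}(\vec f^{\,k})\}_k\big\|_{l^q}$ is not a sparse operator applied to the scalars $\|\{f_i^k\}\|_{l^{q_i}}$. There is no uniform sparse family sitting above all the $\mathcal S_k$ with bounds independent of the number of indices $k$. The paper avoids this by running the stopping-time iteration \emph{directly at the vector-valued level}: one proves (Lemma~\ref{l3.2}) that the grand maximal operator $\mathcal M_T$ is bounded from $L^1(l^{q_1})\times\cdots\times L^1(l^{q_m})$ to $L^{1/m,\infty}(l^q)$, and then the recursion (Theorem~\ref{t3.2}) is carried out on the single scalar function $x\mapsto\|\{[b,T]_j(\vec f^{\,k})(x)\}\|_{l^q}$, producing one sparse family $\mathcal S$ and the pointwise bound
\[
\|\{[b,T]_j(\vec f^{\,k})(x)\}\|_{l^q}\lesssim \mathcal A^{j}_{\mathcal S}\big(\|\{f_1^k\}\|_{l^{q_1}},\dots,\|\{f_m^k\}\|_{l^{q_m}}\big)(x)+\mathcal A^{j,*}_{\mathcal S}\big(\|\{f_1^k\}\|_{l^{q_1}},\dots,\|\{f_m^k\}\|_{l^{q_m}}\big)(x).
\]
This is the step you are missing; it requires the vector-valued endpoint bound for $\mathcal M_T$, which in turn uses the pointwise inequality $\mathcal M_T(f_1,f_2)\lesssim M_\tau(T(f_1,f_2))+\prod_i Mf_i$ together with the known $L^1(l^{q_i})\to L^{1/m,\infty}(l^q)$ bound for $T$ from~\cite{huli}.

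\medskip

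\textbf{$L^r$-averages are not needed.} You assert that the non-smoothness of $K$ forces the grand maximal truncation to be taken with $L^r$-averages. In fact the regularity hypothesis (1.11) in the $x$-variable is precisely what allows the paper (proof of Lemma~\ref{l3.2}) to bound $\mathcal M_T$ pointwise by $M_\tau\big(T(\vec f)\big)+\prod_i Mf_i$, so the sparse domination uses ordinary $L^1$-averages throughout. The term with $\phi(|y_i-y_j|/t^{1/s})$ from Assumption~\ref{a1.1} never enters this part of the argument; Assumption~\ref{a1.1} is used only for the endpoint bound of $T$ itself (via~\cite{huli}), not for $\mathcal M_T$. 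Introducing $L^r$-averages would constrain the admissible $p_i$ and is unnecessary here.

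\medskip

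Finally, your handling of $\mathcal A^{j}_{\mathcal S}$ by level-set splitting of $|b_j(x)-\langle b_j\rangle_Q|$ is workable but more circuitous than the paper's Theorem~\ref{t2.2}, which for $p\le 1$ simply estimates $\int_Q|b_i-\langle b_i\rangle_Q|^p\nu_{\vec w}\lesssim |Q|\,\|\nu_{\vec w}\|_{L(\log L)^{p/s_i},Q}\lesssim[\nu_{\vec w}]_{A_\infty}^{p/s_*}\nu_{\vec w}(Q)$ and for $p>1$ dualizes against $g\nu_{\vec w}$ and applies the same reverse-H\"older trick to $\nu_{\vec w}$.
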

Our argument in the proof of Theorems \ref{t1.1} and \ref{t1.2} also leads to the following weighted weak type endpoint estimate of $T_{\vec{b}}$.
\begin{theorem}\label{t1.3}Let $T$ be an $m$-linear operator  in Theorem \ref{t1.1}, $b_j\in Osc_{{\rm exp}L^{s_j}}(\mathbb{R}^n)$ $(j=1,\,\dots,\,m)$ and $T_{\vec{b}}$ be the commutator defined by (1.13).  Then for $q_1,\,\dots,\,q_m\in (1,\,\infty)$, $q\in (1/m,\,\infty)$ with $1/q=1/q_1+\dots+1/q_m$, $\vec{w}\in A_{{1,\,\dots,\,1}}(\mathbb{R}^{mn})$ and $\lambda>0$,
\begin{eqnarray}&&\nu_{\vec{w}}(\{x\in\mathbb{R}^n:\,\|\{T_{\vec{b}}(f_1^k,\dots,f_m^k)(x)\}\|_{l^q}>\lambda\})\\
&&\quad\lesssim\prod_{j=1}^m\Big(\int_{\mathbb{R}^{n}}
\frac{\|\{f_j^k(y_j)\}\|_{l^{q_j}}}{\lambda^{\frac{1}{m}}}\log^{\frac{1}{s_*}}\Big(1+\frac{\|\{f_j^k(y_j)\}\|_{l^{q_j}}}
{\lambda^{\frac{1}{m}}}\Big)w_j(y_j){\rm d}y_j\Big)^{\frac{1}{m}}.\nonumber\end{eqnarray}
\end{theorem}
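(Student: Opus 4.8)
The plan is to reduce the vector-valued weak-type endpoint bound to a pointwise domination by a sparse-type operator together with the weighted $L\log L$ endpoint estimate for the associated multilinear maximal operator. First I would record the elementary but crucial observation that, because $b_j\in Osc_{\exp L^{s_j}}$, the commutator $T_{\vec b}$ enjoys a Cotlar-type pointwise estimate: for a suitable exponent and via the generalized John--Nirenberg inequality attached to the space $Osc_{\exp L^{s_j}}$, one controls the grand maximal truncation of $T_{\vec b}(\vec f)$ at a point $x$ by a sum (over $j$) of sparse operators of the form
$$
\sum_{Q\in\mathcal S}\|b_j-\langle b_j\rangle_Q\|_{\exp L^{s_j},Q}\ \prod_{i=1}^m\langle |f_i|\rangle_{Q}\ \chi_Q(x)
\ +\ \Big(\sum_{j}\|b_j\|_{Osc_{\exp L^{s_j}}}\Big)\mathcal M(\vec f)(x),
$$
where $\mathcal M$ is the multilinear Hardy--Littlewood maximal operator and $\mathcal S$ is a sparse family; this is exactly the mechanism underlying the scalar estimate (1.14) and it is available here because the kernel $K$ satisfies (1.7), (1.11) and Assumption \ref{a1.1}, which is precisely what the proof of Theorem \ref{t1.1} exploits. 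The passage from the scalar pointwise bound to the $l^q$-valued one is then routine: apply Minkowski's inequality in $l^q$ inside each average $\langle|f_i^k|\rangle_Q$ and use $1/q=\sum 1/q_i$ together with Hölder's inequality to replace $|f_i^k(y)|$ by $\|\{f_i^k(y)\}\|_{l^{q_i}}$, which converts everything into the same sparse/maximal operators applied to the scalar functions $g_i:=\|\{f_i^k(\cdot)\}\|_{l^{q_i}}$.

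Next I would invoke the weighted $L\log L$ endpoint estimate for the multilinear maximal operator and for the Orlicz-sparse operators appearing above, in the multiple $A_{(1,\dots,1)}$ setting. The relevant statement is that for $\vec w\in A_{(1,\dots,1)}(\mathbb R^{mn})$,
$$
\nu_{\vec w}\big(\{x:\ \mathcal M_{L(\log L)^{1/s_*}}(\vec g)(x)>\lambda\}\big)
\lesssim \prod_{j=1}^m\Big(\int_{\mathbb R^n}\Phi_{s_*}\Big(\frac{|g_j(y)|}{\lambda^{1/m}}\Big)w_j(y)\,{\rm d}y\Big)^{1/m},
$$
with $\Phi_{s_*}(t)=t\log^{1/s_*}(e+t)$, where $\mathcal M_{L(\log L)^{1/s_*}}$ is the maximal operator formed with the Orlicz averages $\|\cdot\|_{L(\log L)^{1/s_*},Q}$; this dominates all the Orlicz-sparse terms (since $s_*=\min_i s_i\le s_i$ forces $\|\cdot\|_{\exp L^{s_i}}$-weighted averages to be controlled by $L(\log L)^{1/s_*}$ averages by the generalized Hölder inequality in Orlicz spaces) as well as the plain $\mathcal M$ term. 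The endpoint weighted bound for such multilinear Orlicz maximal operators with $A_{(1,\dots,1)}$ weights is the multilinear analogue of the Pérez $L\log L$ estimate; it can be taken from the literature (e.g., the approach in \cite{ler,lms,perez2}) or proved directly by the standard Calderón--Zygmund decomposition of each $g_j$ at level $\lambda^{1/m}$, exactly as in the linear case, using the $A_{(1,\dots,1)}$ condition to pass from Lebesgue to $\nu_{\vec w}$ measure on the bad cubes.

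Combining the two steps: split $\{x:\|\{T_{\vec b}(\vec f^{\,k})(x)\}\|_{l^q}>\lambda\}$ into the contribution of $\mathcal M(\vec g)$ (handled by the $L\log L$ bound above, which is even stronger than needed since $\log^{1/s_*}\ge 1$ implicitly appears only on the larger-function side) and the contributions of the $m$ Orlicz-sparse operators, each of which is pointwise $\lesssim \|b_j\|_{Osc_{\exp L^{s_j}}}\,\mathcal M_{L(\log L)^{1/s_*}}(\vec g)$; a sparse operator is itself controlled in weak-$L^{1/m}(\nu_{\vec w})$ by its associated maximal operator via the standard stopping-time/$A_\infty$ argument. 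Summing the $m+1$ pieces and absorbing the constants $\|b_j\|_{Osc_{\exp L^{s_j}}}$ yields (1.17). The main obstacle I anticipate is the first step: establishing the Orlicz-sparse pointwise domination of the vector-valued commutator under the weak non-smoothness hypothesis (Assumption \ref{a1.1}) rather than under genuine kernel regularity — one must run the grand-maximal-function/sparse machinery while only having the approximate-identity substitute for $K$, handling the extra term $\sum_i\phi(|y_i-y_j|/t^{1/s})$ in Assumption \ref{a1.1}; this is presumably done by the same device as in the proof of Theorem \ref{t1.1}, choosing $t$ comparable to the side length of the relevant cube so that the $\phi$-term is supported where it is harmless, and I would carry out the endpoint ($p=1/m$, $\vec w\in A_{(1,\dots,1)}$) version of that argument here.
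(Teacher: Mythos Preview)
Your overall architecture (sparse domination, then a weighted endpoint bound for the resulting model operators) matches the paper, but both steps as you state them are incorrect.

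\medskip
\textbf{The sparse domination is written in the wrong form.} Since by definition $\|b_j-\langle b_j\rangle_Q\|_{\exp L^{s_j},Q}\le \|b_j\|_{Osc_{\exp L^{s_j}}}$, the operator you display is just a constant multiple of $\mathcal{A}_{m;\mathcal{S}}(\vec g)$. If that dominated $T_{\vec b}$, the commutator would map $L^1\times\cdots\times L^1$ to $L^{1/m,\infty}$, which is false. The correct domination (Theorem~\ref{t3.2}, equation (3.7) in the paper) produces two genuinely different pieces for each $i$: an Orlicz-sparse term
\[
\mathcal{A}_{m;\mathcal{S},L(\log L)^{\vec\beta_i}}(\vec g)(x)=\sum_{Q\in\mathcal S}\|g_i\|_{L(\log L)^{1/s_i},Q}\prod_{j\ne i}\langle g_j\rangle_Q\,\chi_Q(x),
\]
coming from $\langle|b_i-\langle b_i\rangle_Q|\,g_i\rangle_Q\lesssim\|g_i\|_{L(\log L)^{1/s_i},Q}$, and an $x$-dependent term
\[
\mathcal{A}_{m;\mathcal{S},\vec b}(\vec g)(x)=\sum_{Q\in\mathcal S}\Big(\sum_i|b_i(x)-\langle b_i\rangle_Q|\Big)\prod_{j}\langle g_j\rangle_Q\,\chi_Q(x).
\]
The factor $|b_i(x)-\langle b_i\rangle_Q|$ cannot be replaced by a Luxemburg norm; this second operator requires its own treatment and is absent from your outline.

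\medskip
\textbf{Sparse operators are not pointwise bounded by maximal operators.} A sparse operator is a \emph{sum} of averages over the chain of cubes in $\mathcal S$ containing $x$, not a supremum, so the assertion that each sparse piece is ``pointwise $\lesssim \|b_j\|_{Osc}\,\mathcal M_{L(\log L)^{1/s_*}}(\vec g)$'' is false, and the weighted weak-type bound you need is not a ``standard stopping-time/$A_\infty$'' fact. The paper handles this via the sharp maximal function: Lemma~\ref{l4.4} shows that
\[
M^{\sharp}_{\mathscr D,\delta}\big(\mathcal{A}_{m;\mathcal S,L(\log L)^{\vec\beta_i}}(\vec f)\big)\lesssim \mathcal M_{L(\log L)^{\vec\beta_i}}(\vec f),
\qquad
M^{\sharp}_{\mathscr D,\delta}\big(\mathcal{A}_{m;\mathcal S,\vec b}(\vec f)\big)\lesssim M_\gamma\big(\mathcal{A}_{m;\mathcal S}(\vec f)\big)+\mathcal M(\vec f),
\]
and then applies the Fefferman--Stein inequality (4.4) together with Lemma~\ref{l4.1} (the weighted $L\log L$ endpoint for $\mathcal M_{L(\log L)^{\vec\beta}}$). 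For $\mathcal{A}_{m;\mathcal S,\vec b}$ one needs, in addition, Lemma~\ref{l3.0} to control $M_\gamma$ and a second application of (4.4) to $\mathcal{A}_{m;\mathcal S}$. None of this is captured by your proposal.
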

\begin{remark} For the case that $T$ is multilinear Calder\'on-Zygmund operator and $b_1,\,\dots,\,b_m\in {\rm BMO}(\mathbb{R}^n)$,  (1.16) (the case $\{f_j^k\}=\{f_j\}$) was proved in \cite{ler}. Although Bui and Duong \cite{buiduong} considered the weighted estimate for $T_{\vec{b}}$ under the hypothesis of Theorem \ref{t1.1}, the argument in \cite{ler} does not leads to the conclusion in Theorem \ref{t1.3}.
\end{remark}

In what follows, $C$ always denotes a
positive constant that is independent of the main parameters
involved but whose value may differ from line to line. We use the
symbol $A\lesssim B$ to denote that there exists a positive constant
$C$ such that $A\le CB$.  Constant with subscript such as $C_1$,
does not change in different occurrences. For any set $E\subset\mathbb{R}^n$,
$\chi_E$ denotes its characteristic function.  For a cube
$Q\subset\mathbb{R}^n$ and $\lambda\in(0,\,\infty)$, we use $\ell(Q)$ (${\rm diam}Q$) to denote the side length (diamter) of $Q$, and
$\lambda Q$ to denote the cube with the same center as $Q$ and whose
side length is $\lambda$ times that of $Q$. For $x\in\mathbb{R}^n$ and $r>0$, $B(x,\,r)$ denotes the ball centered at $x$ and having radius $r$.

\section{Preliminaries}
Recall that  the standard dyadic grid in $\mathbb{R}^n$ consists of all cubes of the form $$2^{-k}([0,\,1)^n+j),\,k\in \mathbb{Z},\,\,j\in\mathbb{Z}^n.$$
Denote the standard dyadic grid by $\mathcal{D}$. For a fixed cube $Q$, denote by $\mathcal{D}(Q)$ the set of dyadic cubes with respect to $Q$, that is, the cubes from $\mathcal{D}(Q)$ are formed by repeating subdivision of $Q$ and each of descendants into $2^n$ congruent subcubes.

As usual, by a general dyadic grid $\mathscr{D}$,  we mean a collection of cube with the following properties: (i) for any cube $Q\in \mathscr{D}$, it side length $\ell(Q)$ is of the form $2^k$ for some $k\in \mathbb{Z}$; (ii) for any cubes $Q_1,\,Q_2\in \mathscr{D}$, $Q_1\cap Q_2\in\{Q_1,\,Q_2,\,\emptyset\}$; (iii) for each $k\in \mathbb{Z}$, the cubes of side length $2^k$ form a partition of $\mathbb{R}^n$.

Let $\mathcal{S}$ be a family of cubes and $\eta\in (0,\,1)$. We say that $\mathcal{S}$ is $\eta$-sparse,  if, for each fixed $Q\in \mathcal{S}$, there exists a measurable subset $E_Q\subset Q$, such that $|E_Q|\geq \eta|Q|$ and $\{E_{Q}\}$ are pairwise disjoint. A family is called simply  sparse if $\eta=1/2$.

For constants $\beta_1,\,\dots,\,\beta_m\in [0,\,\infty)$, let $\vec{\beta}=(\beta_1,\,\dots,\,\beta_m)$. Associated with  the sparse family $\mathcal{S}$ and  $\vec{\beta}$, we define the sparse operator $\mathcal{A}_{m;\,\mathcal{S},L(\log L)^{\vec{\beta}}}$  by
\begin{eqnarray}\mathcal{A}_{m;\,\mathcal{S},\,L(\log L)^{\vec{\beta}}}(f_1,\dots,f_m)(x)=\sum_{Q\in\mathcal{S}}\prod_{j=1}^m\|f_j\|_{L(\log L)^{\beta_j},\,Q}\chi_{Q}(x),\end{eqnarray}
with
$$\|f_j\|_{L(\log L)^{\beta_j},\,Q}=\Big\{\lambda>0:\,\frac{1}{|Q|}\int_{Q}\frac{|f(y)|}{\lambda}\log^{\beta_j}\Big(1+\frac{|f(y)|}{\lambda}\Big){\rm d}y\leq 1\Big\}.$$
For the case of $\vec{\beta}=(0,\dots,0)$, we denote $\mathcal{A}_{m;\,\mathcal{S},\,L(\log L)^{\vec{\beta}}}$ by $\mathcal{A}_{m;\,\mathcal{S}}$ for simplicity. Also, we denote $\mathcal{A}_{1;\,\mathcal{S},\,L(\log L)^{\beta}}$ ($\mathcal{A}_{1;\,\mathcal{S}}$) by $\mathcal{A}_{\mathcal{S},\,L(\log L)^{\beta}}$ ($\mathcal{A}_{\mathcal{S}}$). For a weight $u$, let $$\langle h\rangle_{Q}^{u}=\frac{1}{u(Q)}\int_Qh(y)u(y){\rm d}y,$$ and
\begin{eqnarray}&&\widetilde{\mathcal{A}}_{m;\mathcal{S}}(f_1,\dots,f_m)(x)=\mathcal{A}_{m;\mathcal{S}}(f_1\sigma_1,\dots,f_m\sigma_m)(x)=\sum_{Q\in\mathcal{S}}\prod_{j=1}^m\langle f_j\rangle_{Q}^{\sigma_j}\langle\sigma_j\rangle_Q\chi_Q(x).
\end{eqnarray}
For a dyadic grid $\mathscr{D}$, and sparse family $\mathcal{S}\subset \mathscr{D}$, it was proved in \cite{lms} that for $p_1,\,\dots,\,p_m\in (1,\,\infty)$, $p\in (0,\,\infty)$ such that $1/p=1/p_1+\dots+1/p_m$, $\vec{w}=(w_1,\,\dots,\,w_m)\in A_{\vec{P}}(\mathbb{R}^{mn})$, and $\sigma_j=w_j^{-\frac{1}{p_j-1}}$ with $j=1,\,\dots,\,m$,
\begin{eqnarray}\quad\|\widetilde{\mathcal{A}}_{m;\,\mathcal{S}}(f_1,\dots,f_m)\|_{L^p(\mathbb{R}^n,\,\nu_{\vec{w}})}\lesssim [\vec{w}]_{A_p}^{\max\{1,\,\frac{p_1'}{p},\,\dots,\,\frac{p_m'}{p}\}}
\prod_{j=1}^m\|f_j\|_{L^{p_j}(\mathbb{R}^n,\,\sigma_j)},\end{eqnarray}
and so
\begin{eqnarray}\quad\|\mathcal{A}_{m;\,\mathcal{S}}(f_1,\dots,f_m)\|_{L^p(\mathbb{R}^n,\,\nu_{\vec{w}})}\lesssim [\vec{w}]_{A_p}^{\max\{1,\,\frac{p_1'}{p},\,\dots,\,\frac{p_m'}{p}\}}
\prod_{j=1}^m\|f_j\|_{L^{p_j}(\mathbb{R}^n,\,w_j)}.\end{eqnarray}

\begin{theorem}\label{t2.1}Let $p_1,\,\dots,\,p_m\in (1,\,\infty)$, $p\in (0,\infty)$ such that $1/p=1/p_1+\dots+1/p_m$, and $\vec{w}=(w_1,\,\dots,\,w_m)\in A_{\vec{P}}(\mathbb{R}^{mn})$. Set $\sigma_i=w^{-1/(p_i-1)}$. Let $\mathscr{D}$ be a dyadic grid and $\mathcal{S}\subset \mathscr{D}$ be a sparse family. Then
for $\beta_1,\,\dots,\,\beta_m\in [0,\,\infty)$,
\begin{eqnarray}&&\|\mathcal{A}_{m;\mathcal{S},L(\log L)^{\vec{\beta}}}(f_1,\dots,f_m)\|_{L^p(\mathbb{R}^n,\nu_{\vec{w}})}\\
&&\quad\lesssim [\vec{w}]_{A_p}^{\max\{1,\frac{p_1'}{p},\dots,\frac{p_m'}{p}\}}\prod_{j=1}^m[\sigma_j]_{A_{\infty}}^{\beta_j}
\|f_j\|_{L^{p_j}(\mathbb{R}^n,w_j)}.\nonumber\end{eqnarray}
\end{theorem}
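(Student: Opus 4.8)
The plan is to transfer the (unweighted) $L(\log L)^{\vec\beta}$-averaging in $\mathcal{A}_{m;\mathcal{S},L(\log L)^{\vec\beta}}$ onto the weighted sparse operator $\widetilde{\mathcal{A}}_{m;\mathcal{S}}$ of $(2.2)$, for which the bound $(2.3)$ is available, paying the extra cost $\prod_{j=1}^m[\sigma_j]_{A_\infty}^{\beta_j}$ through one localized Orlicz estimate. Since $\vec{w}\in A_{\vec{P}}(\mathbb{R}^{mn})$, both $\nu_{\vec{w}}$ and each $\sigma_j=w_j^{-1/(p_j-1)}$ belong to $A_\infty(\mathbb{R}^n)$ (see \cite{ler}), so the $A_\infty$-constants below are finite. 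Put $g_j=f_j\sigma_j^{-1}$; since $\sigma_j^{1-p_j}=w_j$, one has $\|g_j\|_{L^{p_j}(\mathbb{R}^n,\sigma_j)}=\|f_j\|_{L^{p_j}(\mathbb{R}^n,w_j)}$. For a weight $u$ and a cube $Q$, let $\|h\|^u_{L(\log L)^\beta,Q}$ denote the Luxemburg average of $|h|$ over $Q$ relative to the measure $u(x){\rm d}x/u(Q)$; for $u\equiv1$ this is the quantity $\|h\|_{L(\log L)^\beta,Q}$ in $(2.1)$, and we write $\|h\|^u_{{\rm exp}\,L^{1/\beta},Q}$ for the analogous average with Young function $t\mapsto e^{t^{1/\beta}}-1$.

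First I would prove the localized estimate: for $u\in A_\infty(\mathbb{R}^n)$, $\beta\ge0$, every cube $Q$ and every $h$,
$$\|hu\|_{L(\log L)^\beta,Q}\lesssim_{n,\beta}[u]_{A_\infty}^\beta\,\langle u\rangle_Q\,\|h\|^u_{L(\log L)^\beta,Q}$$
(for $\beta=0$ this is an identity). Set $A=\|h\|^u_{L(\log L)^\beta,Q}$, $M=\langle u\rangle_Q$ and $\lambda=KAM$; using $\log^\beta(e+st)\lesssim_\beta\log^\beta(e+s)+\log^\beta(e+t)$ and $\frac{hu}{\lambda}=\frac{h}{A}\cdot\frac{u}{KM}$, one splits $\frac{1}{|Q|}\int_Q\frac{hu}{\lambda}\log^\beta\big(e+\frac{hu}{\lambda}\big){\rm d}x$ into $\frac{M}{\lambda}\langle h\log^\beta(e+h/A)\rangle_Q^u$, which is $\le MA/\lambda=1/K$ by the very definition of $A$, plus $\frac{M}{\lambda}\langle h\log^\beta(e+u/M)\rangle_Q^u$, which by the generalized Hölder inequality in the Orlicz pair $\big(L(\log L)^\beta,\,{\rm exp}\,L^{1/\beta}\big)$ with respect to $u\,{\rm d}x$ is at most $\frac{M}{\lambda}A\,\|\log^\beta(e+u/\langle u\rangle_Q)\|^u_{{\rm exp}\,L^{1/\beta},Q}\lesssim_{n,\beta}\frac{1}{K}[u]_{A_\infty}^\beta$; the last bound follows from the sharp reverse Hölder inequality $\langle u^{1+\delta}\rangle_Q^{1/(1+\delta)}\le2\langle u\rangle_Q$ with $\delta\asymp[u]_{A_\infty}^{-1}$ (\cite{hp}). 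Choosing $K$ to be a suitable multiple (depending on $n,\beta$) of $[u]_{A_\infty}^\beta$ makes the total $\le1$, which yields the estimate.

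Applying this estimate with $u=\sigma_j$ to every factor in $(2.1)$, written with $f_j=g_j\sigma_j$, gives the pointwise domination
$$\mathcal{A}_{m;\mathcal{S},L(\log L)^{\vec\beta}}(f_1,\dots,f_m)(x)\lesssim\prod_{j=1}^m[\sigma_j]_{A_\infty}^{\beta_j}\sum_{Q\in\mathcal{S}}\prod_{j=1}^m\langle\sigma_j\rangle_Q\,\|g_j\|^{\sigma_j}_{L(\log L)^{\beta_j},Q}\,\chi_Q(x).$$
Hence $(2.5)$ follows once we show that $\vec{g}\mapsto\sum_{Q\in\mathcal{S}}\prod_{j=1}^m\langle\sigma_j\rangle_Q\|g_j\|^{\sigma_j}_{L(\log L)^{\beta_j},Q}\chi_Q$ maps $L^{p_1}(\mathbb{R}^n,\sigma_1)\times\dots\times L^{p_m}(\mathbb{R}^n,\sigma_m)$ into $L^p(\mathbb{R}^n,\nu_{\vec{w}})$ with norm $\lesssim[\vec{w}]_{A_{\vec{P}}}^{\max\{1,p_1'/p,\dots,p_m'/p\}}$, since then the displayed domination together with $\|g_j\|_{L^{p_j}(\sigma_j)}=\|f_j\|_{L^{p_j}(w_j)}$ finishes the proof. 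But this is exactly $(2.3)$ with each plain weighted average $\langle g_j\rangle_Q^{\sigma_j}$ replaced by the weighted Orlicz average $\|g_j\|^{\sigma_j}_{L(\log L)^{\beta_j},Q}$, and I would obtain it by rerunning the proof of $(2.3)$ from \cite{lms} --- duality against $L^{p'}(\mathbb{R}^n,\nu_{\vec{w}})$ when $p\ge1$ (and the argument of \cite{lms} for the range $p<1$), sparseness together with $\nu_{\vec{w}}\in A_\infty$, Hölder over the pairwise disjoint sets $E_Q$, and the $L^{p_j}(\sigma_j)$-boundedness of the relevant weighted maximal operators --- the only change being that $\langle g_j\rangle_Q^{\sigma_j}$ is now majorized on $Q$ by the weighted dyadic Orlicz maximal operator $M^{\mathscr{D}}_{\sigma_j,L(\log L)^{\beta_j}}g_j$ rather than by $M^{\mathscr{D}}_{\sigma_j}g_j$. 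In a dyadic grid, $M^{\mathscr{D}}_{\sigma,L(\log L)^\beta}$ is pointwise dominated by $\lceil\beta\rceil+1$ iterations of $M^{\mathscr{D}}_{\sigma}$ (via the weighted dyadic Calder\'on-Zygmund decomposition), hence is bounded on $L^t(\mathbb{R}^n,\sigma)$ for each $t>1$ with a constant depending only on $t,\beta,n$, not on $\sigma$; consequently the power of $[\vec{w}]_{A_{\vec{P}}}$ that emerges from the argument of \cite{lms} is unchanged.

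The step I expect to be the main obstacle is the last one --- verifying that the quantitative proof of $(2.3)$ is genuinely insensitive to the replacement of plain weighted averages by weighted Orlicz averages, i.e.\ that it relies only on $L^{p_j}(\sigma_j)$- and $L^{p'}(\nu_{\vec{w}})$-boundedness of maximal operators with weight-independent norms, a property the weighted dyadic Orlicz maximal operators share. The localized estimate above, although it uses the generalized Hölder inequality in Orlicz spaces and the sharp reverse Hölder inequality, is the anticipated ingredient and presents no essential difficulty.
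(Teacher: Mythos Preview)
Your proposal is correct and follows the same overall strategy as the paper: use the sharp reverse H\"older inequality for $\sigma_j\in A_\infty$ to extract the factor $[\sigma_j]_{A_\infty}^{\beta_j}$ from the unweighted Orlicz average, and then reduce to the known bound $(2.3)$ through the $L^{p_j}(\sigma_j)$-boundedness, with constant independent of $\sigma_j$, of an appropriate weighted dyadic maximal operator.

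The one difference worth noting is in the form of the localized estimate and how it feeds into $(2.3)$. The paper proves (its inequality $(2.8)$)
\[
\|f_j\sigma_j\|_{L(\log L)^{\beta_j},\,Q}\;\lesssim\;[\sigma_j]_{A_\infty}^{\beta_j}\Big(\frac{1}{\sigma_j(Q)}\int_Q|f_j|^{\varrho_j}\sigma_j\Big)^{1/\varrho_j}\langle\sigma_j\rangle_Q,\qquad \varrho_j=\tfrac{1+p_j}{2}<p_j,
\]
via generalized H\"older together with reverse H\"older at the exponent $r_{\sigma_j}=1+c_n[\sigma_j]_{A_\infty}^{-1}$, whereas you land on the weighted Orlicz average $\|g_j\|^{\sigma_j}_{L(\log L)^{\beta_j},Q}$ in place of the $L^{\varrho_j}(\sigma_j)$-average. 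The paper's version has the advantage that this power average is trivially $\le\langle M^{\mathscr D}_{\sigma_j,\varrho_j}f_j\rangle_Q^{\sigma_j}$, so one may apply $(2.3)$ \emph{directly, as a black box}, to the functions $M^{\mathscr D}_{\sigma_j,\varrho_j}f_j$ and finish with the weight-independent $L^{p_j}(\sigma_j)$-bound for $M^{\mathscr D}_{\sigma_j,\varrho_j}$; there is no need to reopen the argument of \cite{lms}. The same shortcut is in fact available in your framework: since $\|g_j\|^{\sigma_j}_{L(\log L)^{\beta_j},Q}\le\inf_{Q} M^{\mathscr D}_{\sigma_j,L(\log L)^{\beta_j}}g_j\le\langle M^{\mathscr D}_{\sigma_j,L(\log L)^{\beta_j}}g_j\rangle_Q^{\sigma_j}$, you can apply $(2.3)$ directly to the functions $M^{\mathscr D}_{\sigma_j,L(\log L)^{\beta_j}}g_j$ and then invoke the weight-independent $L^{p_j}(\sigma_j)$-bound for this Orlicz maximal operator that you already noted. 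The step you flag as the main obstacle then disappears.
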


\begin{proof}We employ the ideas used in the proof of Theorem 3.2 in \cite{lms}, in which Theorem \ref{t2.1} was proved for the case of $\beta_1=\beta_2=0$, see also the proof of Theorem B in \cite{bcd}.
As it is well known, $\vec{w}\in A_{\vec{P}}(\mathbb{R}^{mn})$ implies $\sigma_j=w_j^{-\frac{1}{p_j-1}}\in A_{mp_j'}(\mathbb{R}^n)$ (see \cite{ler}). Also, it was pointed out in \cite{hp} that for the constant $\theta_{\sigma}=1+\frac{1}{\tau_n[\sigma]_{A_{\infty}}}$ with $\tau_n=2^{11+n}$,
\begin{eqnarray}\Big(\frac{1}{|Q|}\int_Q\sigma_j^{r_{\sigma_j}}(x){\rm d}x\Big)^{\frac{1}{r_{\sigma_j}}}\le 2\frac{1}{|Q|}\int_Q\sigma_j(x){\rm d}x.\end{eqnarray}
Let $\varrho_j=(1+p_j)/2$. We can verify that
$$\big\|\sigma_j^{\frac{1}{\varrho_j'}}\big\|_{L^{\varrho_j'}(\log L)^{\varrho_j'\beta_j},\,Q}\lesssim\|\sigma_j\|_{L(\log L)^{\varrho_j' \beta_j},\,Q}^{\frac{1}{\varrho_j'}}.
$$
Recall that \begin{eqnarray}\|h\|_{L(\log L)^{\varrho},\,Q}\lesssim\max\big\{1,\,\frac{1}{(\delta-1)^{\varrho}}\big\}\Big(\frac{1}{|Q|}\int_{Q}|h(y)|^{\delta}{\rm d}y\Big)^{\frac{1}{\delta}}.\end{eqnarray}
It then follows  that
\begin{eqnarray*}
\big\|\sigma_j^{\frac{1}{\varrho_j'}}\big\|_{L^{\varrho_j'}(\log L)^{\varrho_j'\beta_j},\,Q}&\lesssim&\frac{1}{(r_{\sigma_j}-1)^{\beta_j}}\Big(\frac{1}{|Q|}\int_{Q}\sigma_j^{r_{\sigma_j}}(y){\rm d}y\Big)^{\frac{1}{\varrho_j' r_{\sigma_j}}}\\
&\lesssim&[\sigma_j]_{A_{\infty}}^{\beta_j}\Big(\frac{1}{|Q|}\int_{Q}\sigma_j(y){\rm d}y\Big)^{\frac{1}{\varrho_j'}}.\end{eqnarray*}
Applying the generalization of H\"older's inequality (see \cite{rr}), we deduce that
\begin{eqnarray}
\|f_j\sigma_j\|_{L(\log L)^{\beta_j},\,Q}&\lesssim &
\Big(\frac{1}{|Q|}\int_{Q}|f_j|^{\varrho_j}\sigma_j\Big)^{\frac{1}{\varrho_j}}
\|\sigma_j^{\frac{1}{\varrho_j'}}\|_{L^{\varrho_j'}(\log L)^{\varrho_j'\beta_j},\,Q}\\
&\lesssim&[\sigma_j]_{A_{\infty}}^{\beta_j}\Big(\frac{1}{|Q|}\int_{Q}|f_j|^{\varrho_j}\sigma_j
\Big)^{\frac{1}{\varrho_j}}\Big(\frac{1}{|Q|}\int_{Q}\sigma_j\Big)^{\frac{1}{\varrho_j'}}\nonumber\\
&=&[\sigma_j]_{A_{\infty}}^{\beta_j}\Big(\frac{1}{\sigma_j(Q)}\int_{Q}|f_j|^{\varrho_j}\sigma_j
\Big)^{\frac{1}{\varrho_j}}\frac{\sigma_j(Q)}{|Q|}\nonumber\\
&\lesssim&[\sigma_j]_{A_{\infty}}^{\beta_j}\langle M_{\sigma_j,\,\varrho_j}^{\mathscr{D}}f_j\rangle_{Q}^{\sigma_j}\langle \sigma_j\rangle_Q,\nonumber
\end{eqnarray}
here and in the following, $M_{\sigma_j,\,\varrho_j}^{\mathscr{D}}$ is the maximal operator defined by
$$M_{\sigma_j,\,\varrho_j}^{\mathscr{D}}f_j(x)=\sup_{I\ni x,\,I\in \mathscr{D}}\Big(\frac{1}{\sigma_j(I)}\int_{I}|f_j(y)|^{\varrho_j}\sigma_j(y){\rm d}y
\Big)^{\frac{1}{\varrho_j}}.$$
We then deduce that
$$\prod_{j=1}^m\|f_j\sigma_j\|_{L(\log L)^{\beta_j},\,Q}\lesssim\prod_{i=1}^m[\sigma_i]_{A_{\infty}}^{\beta_i}\sum_{Q\in\mathcal{S}}\prod_{j=1}^m\langle M_{\sigma_j,\,\varrho_j}^{\mathscr{D}}f_j\rangle_{Q}^{\sigma_j}\langle\sigma_j\rangle_Q.
$$
This, via the estimate (2.3) and the fact that $M_{\sigma_j,\,\varrho_j}^{\mathscr{D}}$ is bounded on $L^{p_j}(\mathbb{R}^n,\,\sigma_j)$ with bounds independent of $\sigma_j$, yields
\begin{eqnarray*}
&&\Big\|\sum_{Q\in\mathcal{S}}\prod_{j=1}^m\|f_j\sigma_j\|_{L(\log L)^{\beta_j},\,Q}\chi_{Q}\Big\|_{L^p(\mathbb{R}^n,\,\nu_{\vec{w}})}\\
&&\quad\lesssim \prod_{i=1}^m[\sigma_i]_{A_{\infty}}^{\beta_i}\Big\|\sum_{Q\in\mathcal{S}}\prod_{j=1}^m\langle M_{\sigma_j,\,\varrho_j}f_j\rangle_{Q}^{\sigma_j}\langle\sigma_j\rangle_Q\chi_Q\Big\|_{L^p(\mathbb{R}^n,\,\nu_{\vec{w}})}\\
&&\quad\lesssim[\vec{w}]_{A_p}^{\max\{1,\,\frac{p_1'}{p},\,\dots,\,\frac{p_m'}{p}\}}\prod_{i=1}^m[\sigma_i]_{A_{\infty}}^{\beta_i}
\prod_{j=1}^m\|M_{\sigma_j,\,\varrho_j}^{\mathscr{D}}f_j\|_{L^{p_j}(\mathbb{R}^n,\,\sigma_j)}
\end{eqnarray*}
and then completes the proof of Theorem \ref{t2.1}.
\end{proof}
For locally integrable functions $b_1,\,\dots,\,b_m$ and a sparse family $\mathcal{S}$, let
\begin{eqnarray}\mathcal{A}_{m;\,\mathcal{S},\,\vec{b}}(f_1,\,\dots,\,f_m)(x)=\sum_{Q\in \mathcal{S}}\Big(\sum_{i=1}^m|b_i(x)-\langle b_i\rangle_Q|\Big)\prod_{j=1}^m\langle f_j\rangle_Q\chi_{Q}(x).\end{eqnarray}

\begin{theorem}\label{t2.2}Let $p_1,\,\dots,\,p_m\in (1,\,\infty)$, $p\in (0,\,\infty)$ such that $1/p=1/p_1+\dots+1/p_m$, and $\vec{w}=(w_1,\,\dots,\,w_m)\in A_{\vec{P}}(\mathbb{R}^{mn})$.  Let $\mathscr{D}$ be a dyadic grid and $\mathcal{S}\subset \mathscr{D}$ be a sparse family, $b_i\in Osc_{{\rm exp}L^{s_i}}(\mathbb{R}^n)$ ($s_i\in [1,\,\infty)$) with $\sum_{i=1}^m\|b_i\|_{{Osc_{{\rm exp}L^{s_i}}(\mathbb{R}^n)}}=1$. Then
\begin{eqnarray}&&\|\mathcal{A}_{m;\mathcal{S},\vec{b}}(f_1,\dots,f_m)\|_{L^p(\mathbb{R}^n,\nu_{\vec{w}})}\lesssim [\vec{w}]_{A_p}^{\max\{1,\frac{p_1'}{p},\dots,\frac{p_m'}{p}\}}
[\nu_{\vec{w}}]_{A_{\infty}}^{\frac{1}{s_*}}\prod_{j=1}^m\|f_j\|_{L^{p_j}(\mathbb{R}^n,w_j)}.\end{eqnarray}

\end{theorem}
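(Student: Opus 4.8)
The plan is to argue by duality. I treat first the case $p\ge 1$; the range $p\in(0,1)$ is then handled by inserting the estimates below into the scheme of \cite{lms} used to establish (2.3). Since, by its definition (2.9),
\[\mathcal A_{m;\,\mathcal S,\vec b}(f_1,\dots,f_m)(x)=\sum_{i=1}^m\mathcal A^{i}(f_1,\dots,f_m)(x),\qquad \mathcal A^{i}(f_1,\dots,f_m)(x):=\sum_{Q\in\mathcal S}\big|b_i(x)-\langle b_i\rangle_Q\big|\prod_{j=1}^m\langle f_j\rangle_Q\,\chi_Q(x),\]
and since $\sum_{i=1}^m\|b_i\|_{Osc_{{\rm exp}\,L^{s_i}}(\mathbb R^n)}=1$ while $[\nu_{\vec w}]_{A_\infty}\ge 1$ (so that $[\nu_{\vec w}]_{A_\infty}^{1/s_i}\le[\nu_{\vec w}]_{A_\infty}^{1/s_*}$), it suffices to show, for each fixed $i$, that
\[\|\mathcal A^{i}(\vec f)\|_{L^p(\mathbb R^n,\nu_{\vec w})}\lesssim[\nu_{\vec w}]_{A_\infty}^{1/s_i}\,\|b_i\|_{Osc_{{\rm exp}\,L^{s_i}}}\,[\vec w]_{A_{\vec P}}^{\max\{1,\,p_1'/p,\,\dots,\,p_m'/p\}}\prod_{j=1}^m\|f_j\|_{L^{p_j}(\mathbb R^n,w_j)}.\]
Fix $i$ and abbreviate $b=b_i$, $s=s_i$.

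By $L^p$--duality, $\|\mathcal A^{i}(\vec f)\|_{L^p(\mathbb R^n,\nu_{\vec w})}$ equals the supremum of $\int_{\mathbb R^n}\mathcal A^{i}(\vec f)\,g\,\nu_{\vec w}\,dx$ over all $g$ with $\|g\|_{L^{p'}(\mathbb R^n,\nu_{\vec w})}\le 1$. Expanding this pairing over $\mathcal S$ and, on each cube $Q$, applying the generalised H\"older inequality in the mutually conjugate Orlicz spaces ${\rm exp}\,L^{s}$ and $L(\log L)^{1/s}$ over $Q$, taken with respect to the probability measure $\nu_{\vec w}\,dx/\nu_{\vec w}(Q)$ (cf. \cite{rr,perez2}), gives
\[\int_{\mathbb R^n}\mathcal A^{i}(\vec f)\,g\,\nu_{\vec w}=\sum_{Q\in\mathcal S}\Big(\prod_{j=1}^m\langle f_j\rangle_Q\Big)\int_Q\big|b-\langle b\rangle_Q\big|\,|g|\,\nu_{\vec w}\lesssim\sum_{Q\in\mathcal S}\Big(\prod_{j=1}^m\langle f_j\rangle_Q\Big)\big\|b-\langle b\rangle_Q\big\|_{{\rm exp}\,L^{s}(\nu_{\vec w}),Q}\,\|g\|_{L(\log L)^{1/s}(\nu_{\vec w}),Q}\,\nu_{\vec w}(Q),\]
where $\|\cdot\|_{\Phi(\nu_{\vec w}),Q}$ denotes the Orlicz average over $Q$ relative to the measure $\nu_{\vec w}\,dx$.

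The decisive point is the weighted John--Nirenberg inequality with sharp $A_\infty$ dependence,
\[\big\|b-\langle b\rangle_Q\big\|_{{\rm exp}\,L^{s}(\nu_{\vec w}),Q}\lesssim [\nu_{\vec w}]_{A_\infty}^{1/s}\,\|b\|_{Osc_{{\rm exp}\,L^s}(\mathbb R^n)},\]
which I would prove by combining the elementary distributional bound $\big|\{x\in Q:|b(x)-\langle b\rangle_Q|>\lambda\}\big|\le 2|Q|\exp\!\big(-(\lambda/\|b\|_{Osc_{{\rm exp}\,L^s}})^{s}\big)$ with the quantitative reverse-H\"older ($A_\infty$) estimate $\nu_{\vec w}(E)/\nu_{\vec w}(Q)\lesssim(|E|/|Q|)^{c_n/[\nu_{\vec w}]_{A_\infty}}$ valid for all $E\subset Q$ (see \cite{hp}), and then integrating in $\lambda$ in the Orlicz sense; it is exactly here that the exponent $1/s$, hence $1/s_*$ after summation in $i$, is generated. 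It then remains to remove the factor $\|g\|_{L(\log L)^{1/s}(\nu_{\vec w}),Q}$. Since $\mathcal S\subset\mathscr D$, for every $x\in Q$ we have $\|g\|_{L(\log L)^{1/s}(\nu_{\vec w}),Q}\le M^{\mathscr D}_{\nu_{\vec w},\,L(\log L)^{1/s}}g(x)$, where $M^{\mathscr D}_{\nu_{\vec w},\,L(\log L)^{1/s}}g(x):=\sup_{Q'\in\mathscr D,\,Q'\ni x}\|g\|_{L(\log L)^{1/s}(\nu_{\vec w}),Q'}$, whence $\|g\|_{L(\log L)^{1/s}(\nu_{\vec w}),Q}\,\nu_{\vec w}(Q)\le\int_Q M^{\mathscr D}_{\nu_{\vec w},\,L(\log L)^{1/s}}g\;\nu_{\vec w}$. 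Substituting this and interchanging the sum with the integral turns the right-hand side above into
\[[\nu_{\vec w}]_{A_\infty}^{1/s}\,\|b\|_{Osc_{{\rm exp}\,L^s}}\int_{\mathbb R^n}\mathcal A_{m;\,\mathcal S}(f_1,\dots,f_m)(x)\,\big(M^{\mathscr D}_{\nu_{\vec w},\,L(\log L)^{1/s}}g\big)(x)\,\nu_{\vec w}(x)\,dx,\]
and then H\"older's inequality, the bound (2.4) for $\mathcal A_{m;\,\mathcal S}$, and the $L^{p'}(\mathbb R^n,\nu_{\vec w})$--boundedness of the dyadic weighted Orlicz maximal operator $M^{\mathscr D}_{\nu_{\vec w},\,L(\log L)^{1/s}}$ — a standard fact whose operator norm depends only on $p$ and $s$ and, crucially, \emph{not} on the weight $\nu_{\vec w}$ — yield the claimed estimate for $\int_{\mathbb R^n}\mathcal A^{i}(\vec f)\,g\,\nu_{\vec w}$. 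Taking the supremum over $g$ and summing over $i$ completes the case $p\ge 1$; for $p\in(0,1)$ the same three ingredients (generalised H\"older, the weighted John--Nirenberg bound, and (2.4)) are fed into the argument of \cite{lms} that establishes (2.3).

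The step I expect to be the main obstacle is the sharp weighted John--Nirenberg inequality above — pinning the exponent at exactly $1/s$ rather than a larger power of $[\nu_{\vec w}]_{A_\infty}$ — together with the accompanying bookkeeping needed to make sure that no extra dependence on $[\nu_{\vec w}]_{A_\infty}$ creeps in through the weighted Orlicz maximal operator in the last step, which would spoil the exponent $1/s_*$.
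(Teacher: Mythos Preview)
Your argument is correct, and for $p>1$ both you and the paper proceed by duality and end up reducing to the bound (2.4) for $\mathcal A_{m;\mathcal S}$ together with the $L^{p'}(\nu_{\vec w})$-boundedness of a weighted dyadic maximal operator; but the two routes differ in where the factor $[\nu_{\vec w}]_{A_\infty}^{1/s_*}$ is generated. The paper applies the generalised H\"older inequality on each cube $Q$ with respect to \emph{Lebesgue} measure, pairing $|b_i-\langle b_i\rangle_Q|$ with $g\nu_{\vec w}$: then $\|b_i-\langle b_i\rangle_Q\|_{\exp L^{s_i},Q}\le\|b_i\|_{Osc_{\exp L^{s_i}}}$ is immediate from the definition, and the entire $A_\infty$ dependence enters through the estimate
\[
\|g\nu_{\vec w}\|_{L(\log L)^{1/s_*},Q}\lesssim[\nu_{\vec w}]_{A_\infty}^{1/s_*}\Big(\frac{1}{\nu_{\vec w}(Q)}\int_Q|g|^{\varrho}\nu_{\vec w}\Big)^{1/\varrho}\frac{\nu_{\vec w}(Q)}{|Q|},\qquad \varrho=\tfrac{1+p'}{2},
\]
obtained directly from the sharp reverse-H\"older inequality (2.6)--(2.7), after which one lands on $M_{\nu_{\vec w},\varrho}^{\mathscr D}g$ rather than your Orlicz maximal function (the two are equivalent, since $\|g\|_{L(\log L)^{1/s}(\nu_{\vec w}),Q}\lesssim_{\varrho}\big(\langle|g|^{\varrho}\rangle_Q^{\nu_{\vec w}}\big)^{1/\varrho}$). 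You instead take the Orlicz H\"older inequality with respect to the \emph{weighted} measure $\nu_{\vec w}\,dx/\nu_{\vec w}(Q)$, which forces you to prove the weighted John--Nirenberg bound $\|b-\langle b\rangle_Q\|_{\exp L^{s}(\nu_{\vec w}),Q}\lesssim[\nu_{\vec w}]_{A_\infty}^{1/s}$; your sketch of that via the level-set estimate for $b$ combined with the quantitative $A_\infty$ condition $\nu_{\vec w}(E)/\nu_{\vec w}(Q)\lesssim(|E|/|Q|)^{c_n/[\nu_{\vec w}]_{A_\infty}}$ from \cite{hp} is correct and does give exactly the exponent $1/s$. The paper's route is a little shorter because it avoids this auxiliary lemma altogether, extracting $[\nu_{\vec w}]_{A_\infty}^{1/s_*}$ in one line from reverse H\"older; your route has the conceptual merit of isolating precisely the mechanism by which unweighted oscillation of $b$ transfers to the weighted setting. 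For $p\in(0,1]$ the paper argues directly by $(\sum a_Q)^p\le\sum a_Q^p$, bounding $\int_Q|b_i-\langle b_i\rangle_Q|^p\nu_{\vec w}\lesssim[\nu_{\vec w}]_{A_\infty}^{p/s_*}\nu_{\vec w}(Q)$ and invoking the \cite{lms} estimate for $\sum_{Q\in\mathcal S}\prod_j\langle|f_j|\rangle_Q^p\,\nu_{\vec w}(Q)$; your sketch points to the same scheme and your weighted John--Nirenberg bound would serve equally well there.
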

\begin{proof}We first consider the case of $p\in (0,\,1]$. Write
\begin{eqnarray*}
\int_{\mathbb{R}^n}\big(\mathcal{A}_{m,\,\mathcal{S},\,\vec{b}}\vec{f}(x)\big)^p\nu_{\vec{w}}(x){\rm d}x
&\leq &\sum_{Q\in \mathcal{S}}\prod_{j=1}^m\langle |f_j|\rangle_Q^p\sum_{i=1}^m\int_{\mathbb{R}^n}|b_i(x)-\langle b_i\rangle_Q|^p\nu_{\vec{w}}(x){\rm d}x\\
&\leq&\sum_{Q\in \mathcal{S}}\prod_{j=1}^m\langle |f_j|\rangle_Q^p\sum_{i=1}^m|Q|\|\nu_{\vec{w}}\|_{L(\log L)^{\frac{p}{s_i}},\,Q}\\
&\leq &[\nu_{\vec{w}}]_{A_{\infty}}^{\frac{p}{s_*}}\sum_{Q\in \mathcal{S}}\prod_{j=1}^m\langle |f_j|\rangle_Q^p\nu_{\vec{w}}(Q),
\end{eqnarray*}
where in the last inequality, we have invoked the estimates (2.7) and (2.6) for $\nu_{\vec{w}}$. It was proved in \cite[pp. 757-758]{lms} that
$$\sum_{Q\in \mathcal{S}}\prod_{j=1}^m\langle |f_j|\rangle_Q^p\nu_{\vec{w}}(Q)\lesssim [\vec{w}]_{A_p}^{\max\{p_1',\,\dots,\,p_m'\}}\prod_{j=1}^m\|f_j\|_{L^{p_j}(\mathbb{R}^n,\,w_j)}^p.
$$
The inequality (2.10) then follows in this case.

To consider the case of  $p\in (1,\,\infty)$, let $\varrho=\frac{1+p'}{2}$ with $p'=\frac{p}{p-1}$. Observe that by (2.7),
\begin{eqnarray*}
\|g\nu_{\vec{w}}\|_{L(\log L)^{\frac{1}{s_*}},\,Q}&\lesssim&\Big(\frac{1}{|Q|}\int_{Q}|g(x)|^{\varrho}\nu_{\vec{w}}(x){\rm d}x\Big)^{\frac{1}{\varrho}}\|\nu_{\vec{w}}^{\frac{1}{\varrho'}}\|_{L^{\varrho'}(\log L)^{\frac{\varrho'}{s_*}},\,Q}\\
&\lesssim&[w]_{A_{\infty}}^{\frac{1}{s_*}}\Big(\frac{1}{\nu_{\vec{w}}(Q)}\int_{Q}|g(x)|^{\varrho}\nu_{\vec{w}}(x){\rm d}x\Big)^{\frac{1}{\varrho}}\frac{\nu_{\vec{w}}(Q)}{|Q|}.
\end{eqnarray*}
Therefore, by the generalization of H\"older's inequality (see \cite{rr}),
\begin{eqnarray*}
&&\int_{\mathbb{R}^n}\mathcal{A}_{m,\,\mathcal{S},\,\vec{b}}(f_1,\dots,f_m)(x)g(x)\nu_{\vec{w}}(x){\rm d}x\\
&&\quad= \sum_{Q\in \mathcal{S}}\prod_{j=1}^m\langle |f_j|\rangle_Q\sum_{i=1}^m\int_{\mathbb{R}^n}|b_i(x)-\langle b_i\rangle_Q|g(x)\nu_{\vec{w}}(x){\rm d}x\\
&&\quad\leq\sum_{Q\in \mathcal{S}}\prod_{j=1}^m\langle |f_j|\rangle_Q|Q|\|g\nu_{\vec{w}}\|_{L(\log L)^{\frac{1}{s_*}},\,Q}\\
&&\quad\leq [\nu_{\vec{w}}]_{A_{\infty}}^{\frac{1}{s_*}}\sum_{Q\in \mathcal{S}}\prod_{j=1}^m\langle |f_j|\rangle_Q\inf_{x\in Q}M_{\nu_{\vec{w}},\,\varrho}^{\mathscr{D}}g(x)\nu_{\vec{w}}(Q)\\
&&\quad\leq[\nu_{\vec{w}}]_{A_{\infty}}^{\frac{1}{s_*}}\|\mathcal{A}_{\mathcal{S}}(f_1,\,\dots,\,f_m)\|_{L^p(\mathbb{R}^n,\,\nu_{\vec{w}})}
\|M_{\nu_{\vec{w}},\,\varrho}^{\mathscr{D}}g\|_{L^{p'}(\mathbb{R}^n,\,\nu_{\vec{w}})}.
\end{eqnarray*}
Our desired conclusion then follows from (2.4) and the $L^{p'}(\mathbb{R}^n,\,\nu_{\vec{w}})$ boundedness of $M_{\nu_{\vec{w}},\,\varrho}^{\mathscr{D}}$.
\end{proof}

\section{Proof of Theorems \ref{t1.1} and \ref{t1.2}}
Let $T$ be an $m$-sublinear operator. Associated with $T$, let
$$\mathcal{M}_{T}(f_1,\dots,f_m)(x)=\sup_{Q\ni x}\big\|T(f_1,\dots,f_m)(\xi)-T(f_1\chi_{3Q},\dots,f_m\chi_{3Q})(\xi)\big\|_{L^{\infty}(Q)}.
$$
Following the argument in \cite{ler2}, we have
\begin{lemma}\label{l3.1}
Let $q_1,\,\dots,\,q_m\in (1,\,\infty)$, $q\in (1/m,\,\infty)$ such that $1/q=1/q_1+\dots+1/q_m$, $T$ be an $m$-sublinear operator which is bounded from $L^1(l^{q_1};\,\mathbb{R}^n)\times\dots\times L^1(l^{q_m};\,\mathbb{R}^n)$ to $L^{\frac{1}{m},\,\infty}(l^{q};\,\mathbb{R}^n)$. Then for any cube $Q_0$ and a. e. $x\in Q_0$, we have that
\begin{eqnarray*}\|\{T(f_1^k\chi_{3Q_0},\,\dots,\,f_m^k\chi_{3Q_0})(x)\}\|_{l^q}&\leq & C_1\prod_{j=1}^m\|\{f_j^k(x)\}\|_{l^{q_j}}\\
&&+\big\|\{\mathcal{M}_{T}(f_1^k\chi_{3Q_0},\,\dots,\,f_m^k\chi_{3Q_0})(x)\}\big\|_{l^q},\end{eqnarray*}
provided that $\|\{f_1^k\}\|_{l^{q_1}},\,\dots,\,\|\{f_m^k\}\|_{l^{q_m}}\in L_{loc}^1(\mathbb{R}^n)$.
\end{lemma}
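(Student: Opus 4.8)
The plan is to adapt Lerner's local mean-oscillation argument (as in \cite{ler2}) to the vector-valued multilinear setting. Fix the cube $Q_0$ and abbreviate $g_j^k = f_j^k\chi_{3Q_0}$ and $F^k(x) = T(g_1^k,\dots,g_m^k)(x)$. The key observation is that for a.e.\ $x\in Q_0$ and for \emph{every} sub-cube $Q\ni x$ with $Q\subset Q_0$, one has the pointwise splitting
\begin{eqnarray*}
\|\{F^k(x)\}\|_{l^q} \le \|\{F^k(x) - \langle F^k\rangle_Q\}\|_{l^q} + \|\{\langle F^k\rangle_Q\}\|_{l^q},
\end{eqnarray*}
where $\langle F^k\rangle_Q$ denotes a suitable "median-like" or averaged value of $F^k$ over $Q$; passing to a Lebesgue point of $x$ and letting $Q$ shrink to $x$ kills the first term only in the scalar case, so instead I would run the standard dyadic Calderón--Zygmund decomposition of $Q_0$ adapted to the function $x\mapsto \|\{F^k(x)\}\|_{l^q}$ and the stopping value $C_1\prod_j\|\{f_j^k(x)\}\|_{l^{q_j}}$.

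More precisely, first I would establish the \emph{local} weak-type bound: for any cube $Q\subset\mathbb{R}^n$,
\begin{eqnarray*}
\big|\{x\in Q:\, \|\{T(h_1^k\chi_{3Q},\dots,h_m^k\chi_{3Q})(x)\}\|_{l^q} > \lambda\}\big| \lesssim \lambda^{-1/m}\prod_{j=1}^m\Big(\int_{3Q}\|\{h_j^k(y)\}\|_{l^{q_j}}\,\mathrm{d}y\Big)^{1/m},
\end{eqnarray*}
which is immediate from hypothesis (the $L^1(l^{q_1})\times\dots\times L^1(l^{q_m})\to L^{1/m,\infty}(l^q)$ boundedness of $T$) applied to the truncated data. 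This furnishes, via the usual weak-type--to--oscillation passage, control of the local mean oscillation of $\|\{F^k\}\|_{l^q}$ on $Q_0$ by $C\prod_j\langle\|\{f_j^k\}\|_{l^{q_j}}\rangle_{3Q}$ for subcubes $Q$. Then, applying the Fefferman--Stein / Lerner formula for the local oscillation decomposition of $\|\{F^k(x)\}\|_{l^q}$ over $Q_0$ (the scalar function $x\mapsto\|\{F^k(x)\}\|_{l^q}$ is perfectly legitimate input), I would bound it pointwise a.e.\ on $Q_0$ by a constant plus a sum, over a sparse subfamily, of those local oscillations, and then recognize each local oscillation term as dominated by $\|\{\mathcal{M}_T(g_1^k,\dots,g_m^k)(x)\}\|_{l^q}$ — here one uses that $\mathcal{M}_T$ is exactly built to capture the difference $T(\cdot) - T(\cdot\,\chi_{3Q})$, and that taking $l^q$-norms commutes with the supremum over cubes up to the usual rearrangement, since $\|\{\sup_k a_Q^k\}\|_{l^q}$-type quantities are handled by Minkowski's inequality in $l^q$ together with $q\le q_j$.

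The main obstacle I anticipate is the vector-valued bookkeeping: the oscillation decomposition naturally produces, for each dyadic cube $Q$ in the sparse family and each index $k$, a term controlled by $\mathcal{M}_T(g_1^k,\dots,g_m^k)$ restricted to $Q$, but one must sum these over $k$ inside the $l^q$-norm \emph{and} over the sparse family, in the right order, without losing the geometric decay that makes the sparse sum collapse to a single pointwise supremum. I would handle this by first fixing $k$, running Lerner's argument to get $\|F^k(x)\|\le C_1\prod_j\|\{f_j^\bullet(x)\}\|_{l^{q_j}}$-type scalar bound plus $\mathcal{M}_T(g_1^k,\dots,g_m^k)(x)$ pointwise a.e.\ (this is the genuinely scalar statement of \cite{ler2} applied to the $k$-th slice, noting the constant $C_1$ is uniform in $k$ because it comes only from the operator norm in the weak-type estimate), and only \emph{then} taking the $l^q$-norm in $k$ of both sides, using the triangle inequality in $l^q$ on the right. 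The constant $C_1$ in the statement is precisely the implied constant in the local weak-type bound above, hence independent of $Q_0$, $k$, and the functions, which is what the lemma asserts.
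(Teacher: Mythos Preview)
Most of your proposal is aimed at the wrong target. Lemma \ref{l3.1} is a \emph{pointwise} inequality at a.e.\ fixed $x\in Q_0$; no sparse family, no local mean-oscillation decomposition, and no Calder\'on--Zygmund stopping time enter its proof. That machinery belongs to Theorem \ref{t3.1}, which \emph{uses} Lemma \ref{l3.1} as an input. Your first three paragraphs---medians, the Fefferman--Stein/Lerner formula, sparse sums, and the ``main obstacle'' of summing over a sparse family inside the $l^q$-norm---are addressing a different statement.

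Your final paragraph does land on a valid route: fix $k$, invoke the \emph{scalar} multilinear version of the lemma from \cite{ler3} (the vector-valued weak-type hypothesis specializes to the scalar one by taking one-term sequences), obtain
\[
|T(g_1^k,\dots,g_m^k)(x)|\le C\prod_{j=1}^m|f_j^k(x)|+\mathcal{M}_T(g_1^k,\dots,g_m^k)(x)
\]
for a.e.\ $x\in Q_0$, then take the $l^q$-norm in $k$ and apply H\"older $\|\{\prod_j a_j^k\}\|_{l^q}\le\prod_j\|\{a_j^k\}\|_{l^{q_j}}$. One correction: the scalar bound produces $\prod_j|f_j^k(x)|$, not $\prod_j\|\{f_j^\bullet(x)\}\|_{l^{q_j}}$ as you wrote; the latter is constant in $k$ and its $l^q$-norm would diverge for infinite sequences.

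The paper proceeds differently: it runs the approximate-continuity argument of \cite{ler3} \emph{directly} on the scalar function $x\mapsto\|\{T(g_1^k,\dots,g_m^k)(x)\}\|_{l^q}$. At a point $x$ of approximate continuity one passes to nearby $y$, uses the definition of $\mathcal{M}_T$ (applied uniformly in $k$, so the $l^q$-norm passes through) to split off the piece localized to $3Q(x,r)$, averages that localized piece in a power $\varsigma<1/m$ over a set $E_r(x)$ of nearly full measure, invokes the \emph{vector-valued} weak-type bound, and sends $r\to 0$. Your slice-then-sum reduction is a legitimate shortcut if one is willing to quote the scalar lemma as known; the paper's version is self-contained and uses the vector-valued hypothesis in one stroke rather than only its scalar consequence.
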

\begin{proof} We follow the line  in \cite{ler3}. Let $x\in {\rm int}Q_0$  be a point of approximation continuity of $\|\{T(f_1\chi_{3Q_0},\,\dots,\,f_m\chi_{3Q_0})\}\|_{l^q}$.
For  $r,\,\epsilon>0$, the set
\begin{eqnarray*}E_r(x)&=&\{y\in B(x,\,r):\, \Big|\|\{T(f_1^k\chi_{3Q_0},\,\dots,\,f_m^k\chi_{3Q_0})(x)\}\|_{l^q}\\
&&\qquad-\|\{T(f_1^k\chi_{3Q_0},\dots,\,f_m^k\chi_{3Q_0})(y)\}\|_{l^q}\Big|<\epsilon\}\end{eqnarray*}
satisfies that
$\lim_{r\rightarrow 0}\frac{|E_r(x)|}{|B(x,\,r)|}=1.$ Denote by $Q(x,\,r)$  the smallest cube centered at $x$ and containing $B(x,\,r)$.
Let $r>0$  small enough such that $Q(x,\,r)\subset Q_0$. Then for $y\in E_r(x)$,
\begin{eqnarray*}\|\{T(f_1^k\chi_{3Q_0},\dots,f_m^k\chi_{3Q_0})(x)\}\|_{l^q}&<&\|\{T(f_1^k\chi_{3Q_0},\,\dots,\,f_m^k\chi_{3Q_0})(y)\}\|_{l^q}+\epsilon\\
&\leq& \|\{T(f_1^k\chi_{3Q(x,\,r)},\,\dots,\,f_m^k\chi_{3Q(x,\,r)})(y)\}\|_{l^q}\\
&&+\big\|\{\mathcal{M}_{T}(f_1^k\chi_{3Q_0},\,\dots,\,f_m^k\chi_{3Q_0})(x)\}\big\|_{l^q}+\epsilon.\end{eqnarray*}
Thus, for  $\varsigma\in (0,\,1/m)$,
\begin{eqnarray*}&&\big\|\big\{T(f_1^k\chi_{3Q_0},\,\dots,\,f_m^k\chi_{3Q_0})(x)\big\}\big\|_{l^q}\\
&&\quad\leq \Big(\frac{1}{|E_{s}(x)|}\int_{E_s(x)}\|\{T(f_1^k\chi_{3Q(x,r)},\,\dots,\,f_m^k\chi_{3Q(x,r)})(y)\}\|_{l^q}^{\varsigma}{\rm d}y\Big)^{\frac{1}{\varsigma}}\\
&&\qquad+\big\|\{\mathcal{M}_{T}(f_1^k\chi_{3Q_0},\,\dots,f_m^k\chi_{3Q_0})(x)\}\big\|_{l^q}+\epsilon\\
&&\quad\leq C\prod_{j=1}^m\langle\|\{f_j^k\}\|_{l^{q_j}}\rangle_{Q(x,\,r)}+\big\|\{\mathcal{M}_{T}(f_1^k\chi_{3Q_0},\,\dots,\,f_m^k\chi_{3Q_0})(x)\}
\big\|_{l^q}+\epsilon,\end{eqnarray*}
since $T$ is bounded from $L^1(l^{q_1};\,\mathbb{R}^n)\times\dots\times L^1(l^{q_m};\,\mathbb{R}^n)$ to $L^{\frac{1}{m},\,\infty}(l^{q};\,\mathbb{R}^n)$. Taking $r\rightarrow 0+$ then leads to the conclusion (i).
\end{proof}
\begin{lemma}\label{l3.0}
Let $\tau\in (0,\,1)$ and $M_{\tau}$ be the maximal operator defined by$$M_{\tau}f(x)=\big(M(|f|^{\tau})(x)\big)^{\frac{1}{\tau}}.$$
Then for any $p\in (\tau,\,\infty)$ and $u\in A_{p/\tau}(\mathbb{R}^n)$
$$u(\{x\in\mathbb{R}^n:\|\{M_{\tau}f_k(x)\}\|_{l^q}>\lambda)\lesssim_{u,p}\lambda^{-p}\sup_{t\geq C\lambda}t^pu(\{x\in\mathbb{R}^n:\|\{f_k(x)\}\|_{l^q}>t\}).
$$
\end{lemma}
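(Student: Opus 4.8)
The plan is to combine a homogeneity reduction, the substitution $g_k=|f_k|^{\tau}$, and weighted vector-valued maximal inequalities at two exponents straddling $p/\tau$. Both sides are homogeneous under $\{f_k\}\mapsto\{\kappa f_k\}$, $\lambda\mapsto\kappa\lambda$, so one may take $\lambda=1$. Put $g_k=|f_k|^{\tau}$, $s=q/\tau$ and $P=p/\tau$ (so $s>1$, $P>1$ in the relevant range of $\tau$), and let $\overline M$ be the $l^s$-valued Hardy--Littlewood maximal operator, $\overline M\vec g(x)=\|\{Mg_k(x)\}\|_{l^s}$. Since $\|\{M_{\tau}f_k(x)\}\|_{l^q}=(\overline M\vec g(x))^{1/\tau}$ and $\|\{f_k(x)\}\|_{l^q}=\|\{g_k(x)\}\|_{l^s}^{1/\tau}$, the Lemma is equivalent to the statement that, for $u\in A_{P}(\mathbb R^n)$,
$$u\big(\{x:\overline M\vec g(x)>1\}\big)\lesssim_{u,P}\sup_{\rho\ge c}\rho^{P}u\big(\{x:\|\{g_k(x)\}\|_{l^s}>\rho\}\big)=:N,$$
where $c$ is a suitable constant.

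The two inputs are: (a) the weighted vector-valued Fefferman--Stein inequality, which for every $r\in(1,\infty)$ and every $u\in A_{r}(\mathbb R^n)$ gives the weak bound $\overline M:L^{r}(l^s;\mathbb R^n,u)\to L^{r,\infty}(l^s;\mathbb R^n,u)$ (it follows from the scalar weighted bounds by Rubio de Francia extrapolation); and (b) the openness of the Muckenhoupt classes, which produces an exponent $P_-\in(1,P)$ with $u\in A_{P_-}(\mathbb R^n)$, while also $u\in A_{P_+}(\mathbb R^n)$ for every $P_+>P$. Now split $\vec g=\vec g\,\chi_{E}+\vec g\,\chi_{E^{c}}$, $E=\{x:\|\{g_k(x)\}\|_{l^s}>c\}$, and use sublinearity of $\overline M$ to get $\{\overline M\vec g>1\}\subset\{\overline M(\vec g\chi_{E})>\tfrac12\}\cup\{\overline M(\vec g\chi_{E^{c}})>\tfrac12\}$. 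For the first set, the weak $(P_-,P_-)$ bound followed by the layer-cake formula gives
\begin{eqnarray*}u\big(\{\overline M(\vec g\chi_{E})>\tfrac12\}\big)&\lesssim&\int_{E}\|\{g_k\}\|_{l^s}^{P_-}u\,dx\ \lesssim\ c^{P_-}u(E)+\int_{c}^{\infty}P_-\rho^{P_--1}u\big(\{\|\{g_k\}\|_{l^s}>\rho\}\big)\,d\rho\\
&\lesssim&N\Big(1+\int_{c}^{\infty}\rho^{P_--P-1}\,d\rho\Big),\end{eqnarray*}
and the last integral converges precisely because $P_-<P$; this step is exactly what turns the right-hand side into the restricted supremum $\sup_{\rho\ge c}$ (equivalently $\sup_{t\ge C\lambda}$). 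For the second set, since $\|\{g_k(x)\}\chi_{E^{c}}(x)\|_{l^s}\le c$, a weak $(P_+,P_+)$ bound with $P_+>P$ together with the layer-cake formula integrated over $\rho\le c$ controls $u(\{\overline M(\vec g\chi_{E^{c}})>\tfrac12\})$ by $\big(\sup_{\rho>0}\rho^{P}u(\{\|\{g_k\}\|_{l^s}>\rho\})\big)\,c^{P_+-P}$. Adding the two estimates and undoing the substitution yields the Lemma.

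The main obstacle is step (b). The endpoint estimate $u(\{\overline M\vec g>1\})\lesssim\int\|\{g_k\}\|_{l^s}^{P}u$ is useless on its own, because inserting only $u(\{\|\{g_k\}\|_{l^s}>\rho\})\lesssim N\rho^{-P}$ leaves the logarithmically divergent integral $\int_{c}^{\infty}\rho^{-1}\,d\rho$; one must genuinely gain a power by descending to an exponent below the critical one, which is where the self-improvement of $u\in A_{P}$ enters. A second point worth recording is that the $l^q$-structure cannot be stripped off by a pointwise comparison: $\|\{Mg_k(x)\}\|_{l^s}$ is not dominated by any scalar maximal function of $\|\{g_j(\cdot)\}\|_{l^s}$ (that would collapse the $k$-sum), so the vector-valued inequality (a) is used essentially. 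Finally, in the purely scalar situation one can avoid (a) and argue directly by a Calder\'on--Zygmund decomposition of $|f|^{\tau}$ at height $\lambda^{\tau}$, a dyadic decomposition of each stopping cube $Q_i$ along the level sets $\{|f|>2^{k-1}\lambda\}$, and the comparison $(|E|/|Q_i|)^{P-\varepsilon}\lesssim_{u}u(E)/u(Q_i)$ for $E\subset Q_i$ (available since $u\in A_{P-\varepsilon}(\mathbb R^n)$), followed by a geometric summation in $k$; this makes clear why $p>\tau$ and $u\in A_{p/\tau}$ are the right hypotheses.
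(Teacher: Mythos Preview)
There is a genuine gap in the final step. Your estimate for the low piece $\vec g\,\chi_{E^c}$ is in terms of $\sup_{\rho>0}\rho^{P}u(\{\|\{g_k\}\|_{l^s}>\rho\})$, not the restricted quantity $N=\sup_{\rho\ge c}(\cdots)$ that the Lemma asks for; so ``adding the two estimates'' does not deliver the stated inequality. This cannot be repaired, because the Lemma as written is false for every fixed constant $C>0$. Take $u$ to be Lebesgue measure on $\mathbb R$, fix $q\in(1,\infty)$ and $\tau\in(0,1)$, and set $f_k=a\,\chi_{E_k}$ with $E_k=[2^k,\,3\cdot 2^{k-1}]$ for $k=1,\dots,N$. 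The $E_k$ are pairwise disjoint, so $\|\{f_k(y)\}\|_{l^q}\le a$ for all $y$; choosing $a<C\lambda$ makes the right-hand side of the Lemma vanish. On the other hand $M\chi_{E_k}(0)\ge 1/3$ (use the interval $[0,\,3\cdot 2^{k-1}]$), hence $\|\{M_\tau f_k(0)\}\|_{l^q}\ge a\,3^{-1/\tau}N^{1/q}$, which exceeds $\lambda$ once $N$ is large; by lower semicontinuity of the maximal function the left-hand side is then positive.

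What your argument does correctly establish is the version with $\sup_{t>0}$ in place of $\sup_{t\ge C\lambda}$, and this is all that the applications in the paper actually require. The paper's own proof has the same defect in a sharper form: its displayed step
\[
u\big(\{x:\|\{M_\tau f_k(x)\}\|_{l^q}>2^{1/\tau}\lambda\}\big)\le u\big(\{x:\|\{M(|f_k^2|^\tau)(x)\}\|_{l^{q/\tau}}>\lambda^\tau\}\big)
\]
presupposes $\|\{M(|f_k^1|^\tau)(x)\}\|_{l^{q/\tau}}\le\lambda^\tau$ for every $x$, i.e.\ boundedness of the vector-valued maximal operator on $L^\infty(l^{q/\tau})$, which the same disjoint-support example refutes. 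Aside from this, your approach and the paper's share the same skeleton (split at height $\lambda$, use openness of $A_{p/\tau}$ to drop to an exponent $P_-<P$ for the high part); your handling of the low part via an exponent $P_+>P$ is the correct replacement for the paper's inadmissible pointwise bound, once one accepts the relaxed conclusion $\sup_{t>0}$.
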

\begin{proof} For each fixed $\lambda>0$, decompose $f_k$ as
$$f_k(y)=f_k(y)\chi_{\{\|\{f_k(y)\}\|_{l^q}\leq \lambda\}}(y)+f_k(y)\chi_{\{\|\{f_k(y)\}\|_{l^q}>\lambda\}}(y):=f_k^1(y)+f_k^2(y).$$It then follows that
$$
u(\{x\in\mathbb{R}^n:\,\|\{M_{\tau}f_k(x)\}\|_{l^q}>2^{\frac{1}{\tau}}\lambda\})
\leq u(\{x\in\mathbb{R}^n:\,\|\{M(|f_k^2|^{\tau})(x)\}
\|_{l^{\frac{q}{\tau}}}>\lambda^{\tau}\}).
$$
Recall that $u\in A_{p/\tau}$ implies that $u\in A_{\frac{p-\epsilon}{\tau}}(\mathbb{R}^n)$ for some $\epsilon\in (0,\,p-\tau)$, and that $M$ is
bounded on $L^{\frac{p-\epsilon}{\tau}}(l^q;\,\mathbb{R}^n,\,u)$ (see \cite{fes}). Therefore,
\begin{eqnarray*}
&&u(\{x\in\mathbb{R}^n:\|\{M(|f_k^2|^{\tau})(x)\}
\|_{l^{\frac{q}{\tau}}}>\lambda^{\tau}\})\\
&&\quad\lesssim\lambda^{-p+\epsilon}\int_{\mathbb{R}^n}\|\{f_k^2(x)\}\|_{l^q}^{p-\epsilon}u(x){\rm d}x\\
&&\quad\lesssim u(\{x\in\mathbb{R}^n:\,\|\{f_k\}\|_{l^q}>\lambda\})\\
&&\qquad+\lambda^{-p+\epsilon}\int_{\lambda}^{\infty}u(\{x\in\mathbb{R}^n:\,\|\{f_k^2(x)\}\|_{l^q}>t\})t^{p-\epsilon-1}{\rm d}t\\
&&\quad\lesssim\lambda^{-p}\sup_{t\geq \lambda}t^pu(\{x\in\mathbb{R}^n:\,\|\{f_k(x)\}\|_{l^q}>t\}).
\end{eqnarray*}
This yields our desired conclusion.
\end{proof}

\begin{lemma}\label{l3.2}
Let $q_1,\,\dots, q_m\in (1,\,\infty)$, $q\in (1/m,\,\infty)$ such that $1/q=1/q_1+\dots+1/q_m$. Under the hypothesis of Theorem \ref{t1.1}, the operator $\mathcal{M}_{T}$ is bounded from $L^1(l^{q_1};\,\mathbb{R}^n)\times\dots\times L^1(l^{q_m};\,\mathbb{R}^n)$ to $L^{\frac{1}{m},\,\infty}(l^{q};\,\mathbb{R}^n)$.
\end{lemma}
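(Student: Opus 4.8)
The plan is to reduce the boundedness of $\mathcal{M}_T$ to a pointwise estimate by the product of Hardy--Littlewood maximal functions together with a multilinear maximal operator controlled by the size and regularity conditions on $K$. First I would fix a cube $Q$ and a point $\xi\in Q$, and for a generic $x\in Q$ decompose each $f_j^k$ as $f_j^k = f_j^k\chi_{3Q} + f_j^k\chi_{(3Q)^c}$. Multilinearity expands $T(f_1^k,\dots,f_m^k)(x) - T(f_1^k\chi_{3Q},\dots,f_m^k\chi_{3Q})(x)$ into $2^m - 1$ terms, each containing at least one factor supported off $3Q$; on each such term the diagonal is avoided, so the integral representation (1.6) applies. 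Using the size condition (1.7), a standard geometric argument (splitting $(3Q)^c$ into dyadic annuli $3^{l+1}Q\setminus 3^lQ$) shows that for every such mixed term and any two points $x,\xi\in Q$, the quantity is bounded by $C\prod_{j=1}^m M(\|\{f_j^k\}\|_{l^{q_j}})(\xi)$, after summing the $l^q$-norm over $k$ via Minkowski's inequality (with $q\ge 1/m$ handled by the $\varsigma$-trick as in Lemma \ref{l3.1}, or directly when the inner exponents exceed $1$). Actually, to get the difference $T(\cdots)(\xi') - T(\cdots\chi_{3Q})(\xi)$ under the $L^\infty(Q)$ sup one also needs the regularity condition (1.11): replacing $\xi$ by the center of $Q$ costs a term $\lesssim \sum_l (3^l\ell(Q))^{-\gamma}\ell(Q)^{\gamma}$ times the annulus averages, which again sums to $C\prod_j M(\|\{f_j^k\}\|_{l^{q_j}})(\xi)$.

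Assembling these bounds yields the pointwise domination
\begin{equation*}
\big\|\{\mathcal{M}_T(f_1^k,\dots,f_m^k)(x)\}\big\|_{l^q}\lesssim \prod_{j=1}^m M\big(\|\{f_j^k\}\|_{l^{q_j}}\big)(x)\quad\text{a.e. }x\in\mathbb{R}^n.
\end{equation*}
Granting this, the weak-type bound follows by writing $1/m = \sum_j 1/(m q_j')\cdot$(something) — more precisely, since each $M(\|\{f_j^k\}\|_{l^{q_j}})$ maps $L^1(\mathbb{R}^n)$ to $L^{1,\infty}(\mathbb{R}^n)$ and $1/m = 1+\dots+1$ ($m$ ones), the generalized H\"older inequality for weak spaces (Kolmogorov's inequality, or the elementary fact that a product of $m$ functions in $L^{1,\infty}$ lies in $L^{1/m,\infty}$ with the expected norm bound) gives
\begin{equation*}
\Big\|\prod_{j=1}^m M\big(\|\{f_j^k\}\|_{l^{q_j}}\big)\Big\|_{L^{1/m,\infty}(\mathbb{R}^n)}\lesssim \prod_{j=1}^m\big\|M\big(\|\{f_j^k\}\|_{l^{q_j}}\big)\big\|_{L^{1,\infty}(\mathbb{R}^n)}\lesssim \prod_{j=1}^m\|\{f_j^k\}\|_{L^1(l^{q_j};\,\mathbb{R}^n)},
\end{equation*}
which is exactly the claimed estimate.

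The main obstacle I anticipate is the interaction between the vector-valued ($l^q$) structure and the off-diagonal estimates: one must commute the $l^q$-norm in $k$ past the dyadic-annulus summation and past the use of hypothesis (i) (the scalar $L^{r_1}\times\dots\times L^{r_m}\to L^r$ bound, needed to handle the "all factors on $3Q$" remainder that appears implicitly when one instead estimates $\mathcal{M}_T$ via $T$ itself on $3Q$). For $q\ge 1$ this is routine Minkowski; the delicate range is $q<1$ (possible since $q\in(1/m,\infty)$), where one invokes the Fefferman--Stein vector-valued inequality for $M$ in the form used in Lemma \ref{l3.0} together with the power-trick $\|\cdot\|_{l^q}^\varsigma$ with $\varsigma<\min(1,q_j)$, exactly paralleling Lemma \ref{l3.1}. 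A secondary technical point is verifying that the regularity hypothesis (1.11), stated with an auxiliary scale $D$, is strong enough to absorb the translation of the "bad" point to the center of $Q$ uniformly over all the $2^m-1$ mixed terms; this is precisely the reason (1.11) was formulated with the free parameter $D$ in \cite{huz}, so it should go through.
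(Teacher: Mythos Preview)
Your central claim---the pointwise bound $\|\{\mathcal{M}_T(f_1^k,\dots,f_m^k)(x)\}\|_{l^q}\lesssim \prod_j M(\|\{f_j^k\}\|_{l^{q_j}})(x)$---is false, and this is where the argument breaks. In the multilinear expansion of $T(f_1,\dots,f_m)(\xi)-T(f_1\chi_{3Q},\dots,f_m\chi_{3Q})(\xi)$, one of the $2^m-1$ pieces is the ``all off $3Q$'' term $T(f_1\chi_{(3Q)^c},\dots,f_m\chi_{(3Q)^c})(\xi)$; after taking $\sup_{Q\ni x}$ this is essentially the maximal truncation $\sup_{\epsilon>0}|T_\epsilon(f_1,\dots,f_m)(x)|$. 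That operator is \emph{not} controlled by $\prod_j Mf_j(x)$ from size and regularity alone: already for $m=1$ and the Hilbert transform, with $f=\chi_{[0,1]}$, one has $Mf\le 1$ everywhere while the maximal truncation at $x=1+\delta$ is at least $\log(1/\delta)\to\infty$. Your dyadic-annulus computation, applied to this term, yields $\sum_l (2^l\ell(Q))^{-mn}\prod_j\int_{2^{l+1}Q}|f_j|\approx \sum_l \prod_j Mf_j(x)$, a divergent series---there is no decay factor to sum. (The remark in your last paragraph about an ``all factors on $3Q$'' remainder is off target: that term is precisely what is subtracted away in the definition of $\mathcal{M}_T$.)

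What the paper does instead is prove the Cotlar-type inequality
\[
\mathcal{M}_T(f_1,\dots,f_m)(x)\lesssim M_\tau\big(T(f_1,\dots,f_m)\big)(x)+\prod_{j=1}^m Mf_j(x),\qquad 0<\tau<\tfrac{1}{m},
\]
the extra term $M_\tau(T(f_1,\dots,f_m))$ being exactly what absorbs the ``all off $3Q$'' piece: one compares $T_\epsilon(f_1,\dots,f_m)(x)$ to $T(f_1,\dots,f_m)(z)$ for $z$ in a small ball about $x$, using (1.11) and the scalar weak $(1,\dots,1;1/m)$ bound for $T$. The vector-valued weak-type bound for $\mathcal{M}_T$ then follows from (a) the vector-valued endpoint bound for $T$ itself, quoted from \cite{huli}---this is where Assumption~\ref{a1.1} is actually used---combined with Lemma~\ref{l3.0} to pass through $M_\tau$, and (b) the weak-$L^p$ H\"older argument you sketched for the product $\prod_j Mf_j$. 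Notice that your proposal never invokes Assumption~\ref{a1.1} or any boundedness of $T$ in an essential way; that should be a red flag.
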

\begin{proof}  For simplicity, we only consider the bilinear case, namely, $m=2$. For  $\epsilon>0$, let
$${T}^{\epsilon}(f_1,\,f_2)(x)=\int_{\max_{j}|x-y_j|>\epsilon}K(x;\,y_1,\,y_2)f_1(y_1)f_2(y_2){\rm d}y_1{\rm d}y_2.$$
We claim that for each $\tau\in (0,\,1/2)$,
\begin{eqnarray}
\sup_{\epsilon>0}|T^{\epsilon}(f_1,\,f_2)(x)|\lesssim
M_{\tau}(T(f_1,\,f_2))(x)+Mf_1(x)Mf_2(x).
\end{eqnarray}
To prove this, we will employ the ideas used in \cite{dgy,gly}. let
$$G^{\epsilon}(f_1,\,f_2)(x,\,z)=\int_{\min_{j}|x-y_j|>\epsilon}K(z;\,y_1,\,y_2)f_1(y_1)f_2(y_2){\rm d}y_1{\rm d}y_2.$$
For  functions $f_1,\,\dots,\,f_m$, set
$$f_j^{(0)}(y)=f_j(y)\chi_{B(x,\,\epsilon)}(y).$$ Let
$$A_{\epsilon}(f_1,\,f_2)(x)=\int_{\max_{j}|x-y_j|>\epsilon,\atop{\min_{j}|x-y_j|\le\epsilon}}\big|K(x;\,y_1,\,y_2)\big|
\big|f_1(y_1)f_2(y_2)\big|{\rm d}y_1{\rm d}y_2,$$ and
$$E_{\epsilon}(f_1,\,f_2)(x,\,z)=\int_{\max_{j}|x-y_j|>\epsilon,\atop{\min_{j}|x-y_j|\le\epsilon}}\big|K(z;\,y_1,\,y_2)\big|
\big|f_1(y_1)f_2(y_2)\big|{\rm d}y_1{\rm d}y_2.$$
By the size condition, it is easy to verify that
\begin{eqnarray*}
A_{\epsilon}(f_1,\,f_2)(x)\lesssim Mf_1(x)Mf_2(x).
\end{eqnarray*}
Also, for $z\in B(x,\,\epsilon/8)$, we have
\begin{eqnarray*}
E_{\epsilon}(f_1,\,f_2)(x,\,z)\lesssim  Mf_1(x)Mf_2(x).
\end{eqnarray*}
It then follows from (1.11) that for  $z\in B(x,\,\epsilon/8)$,
\begin{eqnarray*}
&&\big|{T}^{\epsilon}(f_1,\,f_2)(x)-G^{\epsilon}(f_1,\,f_2)(x,\,z)\big|\\
&&\quad\lesssim A_{\epsilon}(f_1,\,f_2)(x)+E_{\epsilon}(f_1,\,f_2)(x,\,z)\\
&&\qquad+ \int_{\min_{i}|x-y_i|>\epsilon}\big|K(x;y_1,\,y_2)-K(z;y_1,\,y_2)\big|f_1(y_1)f_2(y_2){\rm d}y_1{\rm d}y_2\\
&&\quad\lesssim Mf_1(x)Mf_2(x).
\end{eqnarray*}
Observe that for
$z\in B(x,\,\epsilon/8)$,
\begin{eqnarray*}
|G^{\epsilon}(f_1,\,f_2)(x,z)|&\leq &\Big|\int_{\max_{j}|z-y_j|>\epsilon}K(z;\,y_1,\,y_2)f_1(y_1)f_2(y_2){\rm d}y_1{\rm d}y_2\Big|\\
&&+\int_{\frac{\epsilon}{2}\leq \max_{j}|x-y_j|\le 2\epsilon}\big|K(z;y_1,\,y_2)\big|\big|f_1(y_1)f_2(y_2)\big|{\rm d}y_1{\rm d}y_2\\
&\le &\big|T(f_1,\,f_2)(z)|+|T(f_1^{(0)},\,f_2^{(0)})(z)\big|+Mf_1(x)Mf_2(x).
\end{eqnarray*}
Therefore, for any $z\in B(x,\,\epsilon/8)$,
\begin{eqnarray*}
|{T}^{\epsilon}(f_1,\,f_2)(x)|\leq
|T(f_1,\,f_2)(z)|+|T(f_1^{(0)},\,f_2^{(0)})(z)|+\prod_{i=1}^2Mf_i(x).
\end{eqnarray*}This, together with the fact that $T$ is bounded from $L^{1}(\mathbb{R}^n)\times\dots\times
L^{1}(\mathbb{R}^n)$ to $L^{1/m,\,\infty}(\mathbb{R}^n)$, leads to (3.1).

Now let $${T}_{\epsilon}(f_1,\,f_2)(x)=\int_{\min_{j}|x-y_j|>\epsilon}K(x;\,y_1,\,y_2)f_1(y_1)f_2(y_2){\rm d}y_1{\rm d}y_2.$$
By the size condition (1.7), we see that
$$\big|{T}^{\epsilon}(f_1,\,f_2)(x)-T_{\epsilon}(f_1,\,f_2)(x)|\lesssim Mf_1(x)Mf_2(x).$$
and so$$\sup_{\epsilon>0}|T_{\epsilon}(f_1,\,f_2)(x)|\lesssim
M_{\tau}(T(f_1,\,f_2))(x)+Mf_1(x)Mf_2(x).$$

Let $Q\subset \mathbb{R}^n$ be a cube and $x,\,\xi\in Q$. Denote by $B_x$ the ball centered at $x$ and having diameter $10n{\rm diam}\,Q$. Then $3Q\subset B_x$. As in \cite{ler3}, we write
\begin{eqnarray*}&&\big|T(f_1\chi_{\mathbb{R}^n\backslash 3Q},f_2\chi_{\mathbb{R}^n\backslash 3Q})(\xi)|\\
&&\leq |T(f_1\chi_{\mathbb{R}^n\backslash B_x},f_2\chi_{\mathbb{R}^n\backslash B_x})(\xi)-T(f_1\chi_{\mathbb{R}^n\backslash B_x},f_2\chi_{\mathbb{R}^n\backslash B_x})(x)\big|+\sup_{\epsilon>0}|T_{\epsilon}(f_1,f_2)(x)|\\
&&\quad+|T(f_1\chi_{\mathbb{R}^n\backslash B_x},f_2\chi_{B_x\backslash 3Q})(\xi)\big|+|T(f_1\chi_{B_x\backslash 3Q},f_2\chi_{\mathbb{R}^n\backslash 3Q})(\xi)\big|
\end{eqnarray*}
It follows from the regularity condition (1.11) that
$$\big|T(f_1\chi_{\mathbb{R}^n\backslash B_x},\,f_2\chi_{\mathbb{R}^n\backslash B_x})(\xi)-T(f_1\chi_{\mathbb{R}^n\backslash B_x},\,f_2\chi_{\mathbb{R}^n\backslash B_x})(x)\big|\lesssim \prod_{i=1}^2Mf_i(x).$$
On the other hand, by the size condition (1.7), we have
\begin{eqnarray*}
\big|T(f_1\chi_{B_x\backslash 3Q},f_2\chi_{\mathbb{R}^n\backslash 3Q})(\xi)\big|&\lesssim&\int_{B_x}|f_1(y_1)|{\rm d}y_1
\int_{\mathbb{R}^n\backslash 3Q}\frac{|f_2(y_2)|}{(|x-y_2|+{\rm diam }Q)^{2n}}{\rm d}y_2\\
&\lesssim&Mf_1(x)Mf_2(x).
\end{eqnarray*}
Similarly,
\begin{eqnarray*}
\big|T(f_1\chi_{\mathbb{R}^n\backslash B_x},f_2\chi_{B_x\backslash 3Q})(\xi)\big|\lesssim Mf_1(x)Mf_2(x),
\end{eqnarray*}
and
$$\big|T(f_1\chi_{\mathbb{R}^n\backslash 3Q},\,f_2\chi_{3Q})(\xi)
+T(f_1\chi_{3Q},\,f_2\chi_{\mathbb{R}^n\backslash 3Q})(\xi)\big|\lesssim Mf_1(x)Mf_2(x).
$$
Combining the estimates above leads to that
\begin{eqnarray}\mathcal{M}_{T}(f_1,\,f_2)(x)\lesssim M_{\tau}(T(f_1,\,f_2))(x)+\prod_{i=1}^2Mf_i(x).
\end{eqnarray}

Recall that $T$ is bounded from $L^1(l^{q_1};\,\mathbb{R}^n)\times L^1(l^{q_2};\,\mathbb{R}^n)$ to $L^{\frac{1}{2},\,\infty}(l^{q};\,\mathbb{R}^n)$ (see \cite{huli}), and $M$ is bounded from $L^1(l^{q_j};\,\mathbb{R}^n)$ to $L^{1,\,\infty}(l^{q_j};\,\mathbb{R}^n)$ . Now we choose $\tau\in (0,\,1/2)$ in (3.2), our desired conclusion now follows from (3.2) and Lemma \ref{l3.0} immediately.
\end{proof}
\begin{theorem}\label{t3.1}

Let $q_1,\,\dots,q_m\in (1,\,\infty)$ and $q\in (1/m,\,\infty)$ with $1/q=1/q_1+\dots+1/q_m$. Suppose that both the operators $T$ and $\mathcal{M}_{T}$ are  bounded from $L^{1}(l^{q_1};\,\mathbb{R}^n)\times \dots\times L^{1}(l^{q_m};\,\mathbb{R}^n)$ to $L^{1/m,\,\infty}(l^q;\,\mathbb{R}^n)$. Then for $N\in\mathbb{N}$ and bounded functions $\{f_1^k\}_{1\leq k\leq N},\,\dots,\,\{f_m^k\}_{1\leq k\leq N}$ with compact supports, there exists a $\frac{1}{2}\frac{1}{3^n}$-sparse of family $\mathcal{S}$ such that for a. e. $x\in\mathbb{R}^n$,
\begin{eqnarray}\|\{T(f_1^k,\,\dots,\,f_m^k)(x)\}\|_{l^q}\lesssim \mathcal{A}_{m;\,\mathcal{S}}(\|\{f_1^k\}\|_{l^{q_1}},\,\dots,\|\{f_m^k\}\|_{l^{q_m}})(x).\end{eqnarray}
\end{theorem}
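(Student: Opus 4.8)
The plan is to follow Lerner's local mean oscillation / sparse-domination scheme, adapted to the vector-valued $l^q$-setting, exactly as in the proof of the sparse bound for Calder\'on--Zygmund operators in \cite{ler2,ler3}, using Lemma \ref{l3.1} as the substitute for the pointwise estimate on $T(f_1\chi_{3Q},\dots,f_m\chi_{3Q})$ and the boundedness of $\mathcal M_T$ from $L^1(l^{q_1})\times\dots\times L^1(l^{q_m})$ to $L^{1/m,\infty}(l^q)$ (which we are given as a hypothesis, and which holds under the assumptions of Theorem \ref{t1.1} by Lemma \ref{l3.2}) to control the ``oscillation'' part. First I would fix a large cube $Q_0$ and show, by a stopping-time construction, that $\|\{T(f_1^k,\dots,f_m^k)(x)\}\|_{l^q}\chi_{Q_0}(x)$ is pointwise dominated by a sparse sum over $\mathcal D(Q_0)$. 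Concretely, setting $g^k:=T(f_1^k\chi_{3Q_0},\dots,f_m^k\chi_{3Q_0})$, one has on $Q_0$ the splitting $\|\{T(f_1^k,\dots,f_m^k)\}\|_{l^q}\le \|\{g^k\}\|_{l^q}+\|\{\mathcal M_T(f_1^k\chi_{3Q_0},\dots,f_m^k\chi_{3Q_0})\}\|_{l^q}$ by the definition of $\mathcal M_T$; then I would apply the classical local-mean-oscillation formula of Lerner to the scalar function $\|\{g^k(\cdot)\}\|_{l^q}$, getting a bound by its median over $Q_0$ plus a sum $\sum_{Q\in\mathcal F}\omega_{\lambda}(\|\{g^k\}\|_{l^q};Q)\chi_Q$ over a sparse family $\mathcal F\subset\mathcal D(Q_0)$ of the local oscillations.

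The key point is then to estimate each local oscillation $\omega_{\lambda}(\|\{g^k\}\|_{l^q};Q)$ and each median $m_{\|\{g^k\}\|_{l^q}}(Q)$ by $C\prod_{j=1}^m\langle\|\{f_j^k\}\|_{l^{q_j}}\rangle_{3Q}$. For the median (and for the $Q_0$-term) this is immediate from Lemma \ref{l3.1}: on any cube $Q$ one writes $g^k=T(f_1^k\chi_{3Q},\dots,f_m^k\chi_{3Q})+\big(g^k-T(f_1^k\chi_{3Q},\dots)\big)$, where the second term differs by something controlled by $\mathcal M_T(\vec f\chi_{3Q_0})$; the first term is handled by the weak-$(1,\dots,1)$ bound for $T$ on the $l^q$-spaces via Chebyshev, giving $\big|\{y\in Q:\|\{T(f_1^k\chi_{3Q},\dots)(y)\}\|_{l^q}>C\prod_j\langle\|\{f_j^k\}\|_{l^{q_j}}\rangle_{3Q}\}\big|\le |Q|/2^{n+2}$ once $C$ is large, so the relevant median/oscillation is absorbed. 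For the oscillation of the remainder, one uses that $\mathcal M_T$ is bounded from $L^1(l^{q_1})\times\dots\times L^1(l^{q_m})$ to $L^{1/m,\infty}(l^q)$: since $\mathcal M_T(\vec f\chi_{3Q_0})$ is ``almost constant at the scale of $3Q$'' — more precisely, by the same weak-type/Chebyshev argument applied on $3Q$ — one gets $\omega_\lambda(\|\{\mathcal M_T(f^k\chi_{3Q_0})\}\|_{l^q};Q)\lesssim \prod_j\langle\|\{f_j^k\}\|_{l^{q_j}}\rangle_{3Q}$ plus a term handled by passing to a child in the stopping family; this is exactly the bookkeeping in \cite{ler3} and produces the sparse family $\mathcal S$ with constant $\tfrac12 3^{-n}$ after dilating each cube of $\mathcal F$ by a factor $3$ and passing to a subfamily (the factor $3^{-n}$ comes from the overlap of the $3Q$'s).

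Finally I would remove the restriction to a single $Q_0$: cover $\mathbb R^n$ by an increasing sequence of cubes, or — since the $f_j^k$ have compact support and there are only finitely many $k\le N$ — take $Q_0$ large enough that all supports lie in $Q_0$ and $3Q_0$ contributes nothing extra outside, then run the argument on a bounded collection of dyadic shifts of $Q_0$ (a standard $3^n$-translate argument shows any cube $3Q$ with $Q\in\mathcal D(Q_0)$ sits inside a cube of one of $3^n$ shifted dyadic grids), and amalgamate the finitely many resulting sparse families into one $\tfrac12 3^{-n}$-sparse family $\mathcal S$. Collecting terms gives, for a.e.\ $x$,
$$\|\{T(f_1^k,\dots,f_m^k)(x)\}\|_{l^q}\lesssim \sum_{Q\in\mathcal S}\prod_{j=1}^m\langle\|\{f_j^k\}\|_{l^{q_j}}\rangle_{Q}\chi_Q(x)=\mathcal A_{m;\mathcal S}\big(\|\{f_1^k\}\|_{l^{q_1}},\dots,\|\{f_m^k\}\|_{l^{q_m}}\big)(x),$$
which is (3.3). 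I expect the main obstacle to be the careful verification that the local oscillations of the $\mathcal M_T$-term are controlled by averages of the $\|\{f_j^k\}\|_{l^{q_j}}$ — i.e.\ that $\mathcal M_T$ behaves like a ``maximal truncation'' in the stopping-time decomposition — since this is where the vector-valued weak-type hypothesis on $\mathcal M_T$ must be used in the same way Lerner uses the weak $(1,1)$ bound for the maximal singular integral in the scalar case; the rest is routine adaptation of \cite{ler3}.
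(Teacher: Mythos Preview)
Your proposal mixes two distinct Lerner-type arguments and the mixture creates a genuine gap. You invoke the \emph{local mean oscillation formula} and propose to bound the oscillations $\omega_\lambda(\|\{g^k\}\|_{l^q};Q)$, but the papers \cite{ler2,ler3} you cite do \emph{not} use that formula; they use the later, direct recursive construction, and that is exactly what the present paper does as well. The distinction matters here: the step where you claim that ``$\mathcal M_T(\vec f\chi_{3Q_0})$ is almost constant at the scale of $3Q$'' and hence $\omega_\lambda(\|\{\mathcal M_T(f^k\chi_{3Q_0})\}\|_{l^q};Q)\lesssim \prod_j\langle\|\{f_j^k\}\|_{l^{q_j}}\rangle_{3Q}$ is not justified and is not a consequence of the weak-type bound for $\mathcal M_T$. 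A weak $L^{1}\times\dots\times L^{1}\to L^{1/m,\infty}$ estimate controls \emph{level sets}, not oscillations; the remainder $g^k-T(\vec f^k\chi_{3Q})$ is only known to satisfy $\|\{\,\cdot\,\}\|_{l^q}\le \inf_{x'\in Q}\|\{\mathcal M_T(\vec f^k\chi_{3Q_0})(x')\}\|_{l^q}$ on $Q$, which bounds its size, not its oscillation, and this infimum is not in general dominated by $\prod_j\langle\|\{f_j^k\}\|_{l^{q_j}}\rangle_{3Q}$.

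The paper avoids this difficulty entirely by never estimating an oscillation. Fixing $Q_0$, it defines the exceptional set
\[
E=\Big\{x\in Q_0:\prod_j\|\{f_j^k(x)\}\|_{l^{q_j}}\ \text{or}\ \|\{\mathcal M_T(\vec f^k\chi_{3Q_0})(x)\}\|_{l^q}\ \text{is large relative to}\ \prod_j\langle\|\{f_j^k\}\|_{l^{q_j}}\rangle_{3Q_0}\Big\},
\]
uses the weak-type bound for $\mathcal M_T$ to get $|E|\le 2^{-(n+2)}|Q_0|$, and runs a Calder\'on--Zygmund decomposition of $\chi_E$ at level $2^{-(n+1)}$ to produce stopping cubes $\{P_j\}$ with $\sum_j|P_j|\le \tfrac12|Q_0|$ and $P_j\cap E^c\neq\emptyset$. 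The latter condition pins down $\|\{\mathcal M_T(\vec f^k\chi_{3Q_0})\}\|_{l^q}$ at some point of $P_j$, which by the very definition of $\mathcal M_T$ controls \emph{all} the cross-terms $T(\vec f^k\chi_{3Q_0})-T(\vec f^k\chi_{3P_j})$ in $L^\infty(P_j)$. Then Lemma~\ref{l3.1} handles $Q_0\setminus\bigcup_jP_j$ pointwise, yielding the recursive inequality (3.4); iteration gives a $\tfrac12$-sparse family in $\mathcal D(Q_0)$, and the final $\tfrac12 3^{-n}$-sparse family $\mathcal S$ is obtained by partitioning $\mathbb R^n$ into cubes $Q_l$ with $\bigcup_{j,k}\mathrm{supp}\,f_j^k\subset 3Q_l$ and replacing each selected $Q$ by $3Q$. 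So the role of the $\mathcal M_T$ hypothesis is only to make $|E|$ small and to freeze the cross-terms via $P_j\cap E^c\neq\emptyset$; no oscillation or ``almost constant'' property is needed. If you rewrite your argument along these lines, dropping the LMO formula and using Lemma~\ref{l3.1} together with the Calder\'on--Zygmund selection of $\{P_j\}$, you recover the paper's proof.
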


\begin{proof} Again, we only consider the case $m=2$. We follow the argument used  in \cite{ler3}.  At first, we claim that for each cube $Q_0\subset \mathbb{R}^n$,  there exist pairwise disjoint cubes $\{P_j\}\subset \mathcal{D}(Q_0)$, such that $\sum_{j}|P_j|\leq \frac{1}{2}|Q_0|$ and a. e. $x\in Q_0$,
\begin{eqnarray}
&&\big\|\big\{T(f_1^k\chi_{3Q_0},f_2^k\chi_{3Q_0})(x)\big\}\big\|_{l^q}\chi_{Q_0}(x)\\
&&\quad\le C\prod_{i=1}^2\langle\|\{f_i^k\}\|_{l^{q_i}}\rangle_{3Q_0}+\sum_{j}\|\{T(f_1^k\chi_{3P_j},f_2^k\chi_{3P_j})(x)\}\|_{l^{q}}\chi_{P_j}(x).\nonumber
\end{eqnarray}To prove this, let
$C_2>0$ which will be chosen later and \begin{eqnarray*}E&=&\big\{x\in Q_0:\,\|\{f_1^k(x)\}\|_{l^{q_1}}\|\{f_2^k(x)\}\|_{l^{q_2}}> \prod_{i=1}^2\langle\|\{f_i^k\}\|_{l^{q_i}}\rangle_{3Q_0}\big\}\\
&&\quad
\cup\big\{x\in Q_0:\,\|\{\mathcal{M}_{T}(f_1^k\chi_{3Q_0},\,f_2^k\chi_{3Q_0})(x)\}\|_{l^q}>C_2\langle\prod_{i=1}^2\langle \|\{f_i^k\}\|_{l^{q_i}}\rangle_{3Q_0}\big\}.
\end{eqnarray*}
If we choose $C_2$ large enough, we then know from Lemma \ref{l3.2} that $|E|\leq \frac{1}{2^{n+2}}|Q_0|.$ Now applying the Calder\'on-Zygmund decomposition to $\chi_E$ on $Q_0$ at level $\frac{1}{2^{n+1}}$, we then obtain a family of pairwise disjoint cubes $\{P_j\}$ such that
$$\frac{1}{2^{n+1}}|P_j|\leq |P_j\cap E|\leq \frac{1}{2}|P_j|,$$ and $|E\backslash \cup_jP_j|=0$. It then follows that $\sum_j|P_j|\leq \frac{1}{2}|E|$,  and $P_j\cap E^c\not =\emptyset.$ Therefore,
\begin{eqnarray}
&&\Big\|\big\|\{T(f_1^k\chi_{3Q_0\backslash 3P_j},\,f_2^k\chi_{3Q_0\backslash 3P_j})(\xi)\}\big\|_{l^q}
+\big\|\{T(f_1^k\chi_{3Q_0\backslash 3P_j},\,f_2^k\chi_{3P_j})(\xi)\}\big\|_{l^q}\\
&&\quad+\big\|\{T(f_1^k\chi_{3P_j},\,f_2^k\chi_{3Q_0\backslash 3P_j})(\xi)\}\big\|_{l^q}\Big\|_{L^{\infty}(P_j)}\leq C_2\prod_{i=1}^2\langle \|\{f_i^k\}\|_{l^{q_i}}\rangle_{3Q_0}.\nonumber
\end{eqnarray}
Note that
\begin{eqnarray}
&&\|\{T(f_1^k\chi_{3Q_0},f_2^k\chi_{3Q_0})(x)\}\|_{l^q}\chi_{Q_0}(x)\\
&&\quad\leq \|\{T(f_1^k\chi_{3Q_0},f_2^k\chi_{3Q_0})(x)\}\|_{l^q}\chi_{Q_0\backslash \cup_jP_j}(x)\nonumber\\
&&\qquad+\sum_j\|\{T(f_1^k\chi_{3P_j},\,f_2^k\chi_{3P_j})(x)\}\|_{l^q}\chi_{P_j}(x)+\sum_j{\rm D}_j(x)\chi_{P_j}(x),\nonumber\end{eqnarray}
with
\begin{eqnarray*}{\rm D}_j(x)&=&\|\{T(f_1^k\chi_{3Q_0\backslash 3P_j},f_2^k\chi_{3Q_0\backslash 3P_j})(x)\}\|_{l^q}+\|\{T(f_1^k\chi_{3Q_0\backslash 3P_j},f_2^k\chi_{3P_j})(x)\}\|_{l^q}\\
&&+\|\{T(f_1^k\chi_{3P_j},\,f_2^k\chi_{3Q_0\backslash 3P_j})(x)\}\|_{l^q}.
\end{eqnarray*}
(3.4) now follows from (3.5), (3.6) and Lemma \ref{l3.1}.

We can now conclude the proof of Theorem \ref{t3.1}. As it was proved in \cite{ler2}, the last estimate shows that there exists a $\frac{1}{2}$-sparse family $\mathcal{F}\subset \mathcal{D}(Q_0)$, such that for a. e. $x\in Q_0$,
\begin{eqnarray*}
\big\|\big\{T(f_1^k\chi_{3Q_0},\,f_2^k\chi_{3Q_0})(x)\big\}\big\|_{l^q}\chi_{Q_0}(x)\lesssim \sum_{Q\in \mathcal{F}}\prod_{i=1}^2\langle\|\{f_i^k\}\|_{l^{q_i}}\rangle_{3Q}\chi_{Q}(x).
\end{eqnarray*}
Recalling that $\{f_1^k\}_{1\leq k\leq N},\,\{f_2^k\}_{1\leq k\leq N}$ are functions in $L^1(\mathbb{R}^n)$ with compact supports, we can take now a partition of $\mathbb{R}^n$ by cubes $Q_j$ such that $\cup_{k=1}^N\cup_{i=1}^2{\rm supp}\,f_i^k\subset 3Q_j$
for each $j$ and obtain a $\frac{1}{2}$-
sparse family $\mathcal{F}_j\subset\mathcal{D}(Q_j)$ such that for a. e. $x\in Q_j$,
$$
\big\|\big\{T(f_1^k\chi_{3Q_j},\,f_2^k\chi_{3Q_j})(x)\big\}\big\|_{l^q}\chi_{Q_j}(x)\lesssim \sum_{Q\in \mathcal\mathcal{F}_j}\prod_{i=1}^2\langle\|\{f_i^k\}\|_{l^{q_i}}\rangle_{3Q}\chi_{Q}(x).
$$
 Setting $\mathcal{S}=\{3Q:\, Q\in\cup_j\mathcal{F}_j\}$, we see that (3.3) holds true for $\mathcal{S}$ and a. e. $x\in\mathbb{R}^n$.
\end{proof}
Similar to the proof of Theorem \ref{t3.1}, by mimicking the proof of Theorem 1.1 in \cite{ler3}, we can prove
\begin{theorem}\label{t3.2}
Let $q_1,\,\dots,q_m\in (1,\,\infty)$ and $q\in (1/m,\,\infty)$ with $1/q=1/q_1+\dots+1/q_m$, $b\in L_{\rm loc}^1(\mathbb{R}^n)$. Suppose that both the operators $T$ and $\mathcal{M}_{T}$ are  bounded from $L^{1}(l^{q_1};\,\mathbb{R}^n)\times \dots\times L^{1}(l^{q_m};\,\mathbb{R}^n)$ to $L^{1/m,\,\infty}(l^q;\,\mathbb{R}^n)$. Then for $N\in\mathbb{N}$ and bounded functions $\{f_1^k\}_{1\leq k\leq N}$,\,$\dots$, $\{f_m^k\}_{1\leq k\leq N}$ with compact supports, there exists a $\frac{1}{2}\frac{1}{3^n}$-sparse of family $\mathcal{S}$ such that for a. e. $x\in\mathbb{R}^n$,
\begin{eqnarray*}\|\{[b,T]_i(f_1^k,\dots,f_m^k)(x)\}\|_{l^q}&\lesssim&\sum_{Q\in\mathcal{S}}\langle|b-\langle b\rangle_Q|\|\{f_i^k\}\|_{l^{q_i}}\rangle_Q\prod_{j\not=i}\langle\|\{f_j^k\}\|_{l^{q_j}}\rangle_Q\chi_{Q}(x)\\
&&+\sum_{Q\in\mathcal{S}}|b(x)-\langle b\rangle_{Q}|\prod_{j=1}^m\langle \|\{f_j^k\}\|_{l^{q_j}}\rangle_{Q}\chi_{Q}(x).\nonumber\end{eqnarray*}
\end{theorem}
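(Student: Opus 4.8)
Throughout write $F_j=\|\{f_j^k\}\|_{l^{q_j}}$ and $c_Q=\langle b\rangle_{3Q}$. The plan is to rerun the proof of Theorem \ref{t3.1} for the commutator, the only new ingredient being the elementary identity, valid for every constant $c$ by the $m$-linearity of $T$,
\begin{equation*}
[b,T]_i(\vec{f})=(b-c)T(\vec{f})-T(f_1,\dots,(b-c)f_i,\dots,f_m).
\end{equation*}
As in Theorem \ref{t3.1} everything reduces to one recursive step: for each cube $Q_0$ there is a pairwise disjoint family $\{P_j\}\subset\mathcal{D}(Q_0)$ with $\sum_j|P_j|\le\frac12|Q_0|$ such that, for a.e.\ $x$,
\begin{align*}
&\|\{[b,T]_i(\vec{f}^k\chi_{3Q_0})(x)\}\|_{l^q}\chi_{Q_0}(x)\\
&\ \le C\Big(\langle|b-c_{Q_0}|F_i\rangle_{3Q_0}\prod_{j\not=i}\langle F_j\rangle_{3Q_0}+|b(x)-c_{Q_0}|\prod_{j=1}^m\langle F_j\rangle_{3Q_0}\Big)\chi_{Q_0}(x)+\sum_j\|\{[b,T]_i(\vec{f}^k\chi_{3P_j})(x)\}\|_{l^q}\chi_{P_j}(x).
\end{align*}
Given this, iterating exactly as in the last paragraph of the proof of Theorem \ref{t3.1} yields a $\frac12$-sparse family $\mathcal{F}\subset\mathcal{D}(Q_0)$ with $\|\{[b,T]_i(\vec{f}^k\chi_{3Q_0})(x)\}\|_{l^q}\chi_{Q_0}(x)\lesssim\sum_{Q\in\mathcal{F}}\big(\langle|b-c_Q|F_i\rangle_{3Q}\prod_{j\not=i}\langle F_j\rangle_{3Q}+|b(x)-c_Q|\prod_j\langle F_j\rangle_{3Q}\big)\chi_Q(x)$; since the average $c_Q$ is reset to the current cube at each stage, no telescoping of $\langle b\rangle_{3Q}$ is needed. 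Exhausting $\mathbb{R}^n$ by cubes containing all the supports and putting $\mathcal{S}=\{3Q:Q\in\bigcup\mathcal{F}\}$, which is $\frac12\frac1{3^n}$-sparse, gives Theorem \ref{t3.2}.

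For the recursive step the plan is to enlarge the exceptional set used for $T$ alone in Theorem \ref{t3.1}. Let $E\subset Q_0$ be the union of the four level sets on which, respectively, $\prod_{j=1}^mM(F_j\chi_{3Q_0})$ and $\|\{\mathcal{M}_T(\vec{f}^k\chi_{3Q_0})(\cdot)\}\|_{l^q}$ exceed $C_2\prod_j\langle F_j\rangle_{3Q_0}$, and $M\big(|b-c_{Q_0}|F_i\chi_{3Q_0}\big)\prod_{j\not=i}M(F_j\chi_{3Q_0})$ and $\|\{\mathcal{M}_T(f_1^k\chi_{3Q_0},\dots,(b-c_{Q_0})f_i^k\chi_{3Q_0},\dots,f_m^k\chi_{3Q_0})(\cdot)\}\|_{l^q}$ exceed $C_2\langle|b-c_{Q_0}|F_i\rangle_{3Q_0}\prod_{j\not=i}\langle F_j\rangle_{3Q_0}$. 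Products of Hardy--Littlewood maximal operators are of weak type $(1,\dots,1)$ into $L^{1/m,\infty}$, and by hypothesis so is $\mathcal{M}_T$ from $L^1(l^{q_1};\mathbb{R}^n)\times\dots\times L^1(l^{q_m};\mathbb{R}^n)$ into $L^{1/m,\infty}(l^q;\mathbb{R}^n)$; since $b\in L_{\rm loc}^1(\mathbb{R}^n)$ and the $f_j^k$ are bounded with compact support, $(b-c_{Q_0})f_i^k\chi_{3Q_0}$ is an admissible input and its $L^1(l^{q_i})$ norm equals $|3Q_0|\langle|b-c_{Q_0}|F_i\rangle_{3Q_0}$. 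Hence, choosing $C_2$ large, $|E|\le\frac1{2^{n+2}}|Q_0|$, and the Calder\'on--Zygmund decomposition of $\chi_E$ on $Q_0$ at level $\frac1{2^{n+1}}$ produces disjoint $\{P_j\}$ with $\sum_j|P_j|\le\frac12|Q_0|$, $|E\setminus\bigcup_jP_j|=0$ and $P_j\cap E^c\not=\emptyset$; note $3P_j\subset3Q_0$ because $\ell(P_j)\le\frac12\ell(Q_0)$.

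On $Q_0\setminus\bigcup_jP_j$ I would apply the commutator identity with $c=c_{Q_0}$ and Lemma \ref{l3.1} to $T(\vec{f}^k\chi_{3Q_0})$ and to $T$ with $i$-th argument $(b-c_{Q_0})f_i^k\chi_{3Q_0}$: a.e.\ point there lies in $E^c$, so $\|\{T(\vec{f}^k\chi_{3Q_0})(x)\}\|_{l^q}\lesssim\prod_j\langle F_j\rangle_{3Q_0}$ and $\|\{T(\dots,(b-c_{Q_0})f_i^k\chi_{3Q_0},\dots)(x)\}\|_{l^q}\lesssim\langle|b-c_{Q_0}|F_i\rangle_{3Q_0}\prod_{j\not=i}\langle F_j\rangle_{3Q_0}$; the second of these is the first main term and $|b(x)-c_{Q_0}|$ times the first is bounded by the second main term. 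On each $P_j$ I would peel off $[b,T]_i(\vec{f}^k\chi_{3P_j})$: using $3P_j\subset3Q_0$ one has $\vec{f}^k\chi_{3Q_0}\chi_{3P_j}=\vec{f}^k\chi_{3P_j}$, so by the definition of $\mathcal{M}_T$ with the cube $P_j$ the differences $T(\vec{f}^k\chi_{3Q_0})(\xi)-T(\vec{f}^k\chi_{3P_j})(\xi)$ and $T(\dots,(b-c_{Q_0})f_i^k\chi_{3Q_0},\dots)(\xi)-T(\dots,(b-c_{Q_0})f_i^k\chi_{3P_j},\dots)(\xi)$ are bounded, for all $\xi\in P_j$ and after passing through the $l^q$-norm, by $\|\{\mathcal{M}_T(\vec{f}^k\chi_{3Q_0})(x_j)\}\|_{l^q}$ and by its twisted analogue at any $x_j\in P_j\cap E^c$, hence by $C_2$ times the respective averages; via the commutator identity these contribute $|b(x)-c_{Q_0}|$ times a quantity $\lesssim\prod_j\langle F_j\rangle_{3Q_0}$ and a quantity $\lesssim\langle|b-c_{Q_0}|F_i\rangle_{3Q_0}\prod_{j\not=i}\langle F_j\rangle_{3Q_0}$, i.e.\ again the two main terms, plus the remainder $\|\{[b,T]_i(\vec{f}^k\chi_{3P_j})(x)\}\|_{l^q}\chi_{P_j}(x)$. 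This gives the recursive step.

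The main obstacle, exactly as in \cite{ler3}, is this reassembly on the cubes $P_j$: one must verify in the vector-valued setting that the $l^q$-norm of the cross-differences $T(\vec{f}^k\chi_{3Q_0})(\xi)-T(\vec{f}^k\chi_{3P_j})(\xi)$ (and its twisted version) is still controlled by $\|\{\mathcal{M}_T(\cdot)(x)\}\|_{l^q}$, which rests on the fact that for each cube $Q\ni x$ one has $\mathcal{M}_T(\vec{g})(x)\ge\|T(\vec{g})(\xi)-T(\vec{g}\chi_{3Q})(\xi)\|_{L^\infty(Q)}$ for \emph{every} such $x$, so that a single point of $P_j\setminus E$ already bounds the ($\xi$-independent) supremum over $P_j$. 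The only other genuinely new points are the admissibility of $(b-c_{Q_0})f_i^k\chi_{3Q_0}$ as an input for $\mathcal{M}_T$, for $T$ and for Lemma \ref{l3.1}, together with the $l^q$-bookkeeping in the applications of Lemma \ref{l3.1}; everything else is a transcription of the proof of Theorem \ref{t3.1} and of the scalar, unweighted sparse-domination argument of \cite{ler3}.
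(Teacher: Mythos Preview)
Your proposal is correct and is precisely the argument the paper has in mind: the paper does not give a proof of Theorem \ref{t3.2} at all but simply says it follows ``similar to the proof of Theorem \ref{t3.1}, by mimicking the proof of Theorem 1.1 in \cite{ler3}'', and what you have written is exactly that adaptation, with the commutator identity $[b,T]_i(\vec{f})=(b-c)T(\vec{f})-T(\dots,(b-c)f_i,\dots)$ driving the enlarged exceptional set and the twisted application of $\mathcal{M}_T$. Your handling of the reassembly on the $P_j$ via the single difference $T(\vec{f}^k\chi_{3Q_0})-T(\vec{f}^k\chi_{3P_j})$ controlled by $\mathcal{M}_T$ at a point of $P_j\cap E^c$ is in fact slightly cleaner than the cross-term expansion (3.5)--(3.6) the paper uses for Theorem \ref{t3.1}.
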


We are now ready to prove Theorem \ref{t1.1} and Theorem \ref{t1.2}.

\medskip

{\it Proof of Theorem \ref{t1.1}}. Obviously, it suffices to consider the case that $\{f_1^k\}$, $\dots$,\,$\{f_m^k\}$ are finite sequences.
By the  well known one-third trick (see \cite[Lemma 2.5]{hlp}), we know that if $\mathcal{S}$ is a sparse family, then there exist  general dyadic grids $\mathscr{D}_1,\,\dots,\mathscr{D}_{3^n}$, and sparse families $\mathcal{S}_i\subset \mathscr{D}_i$, with $i=1,\,\dots,\, 3^n$, such that
$$
\mathcal{A}_{m;\,\mathcal{S},\,L(\log L)^{\vec{\beta}}}(f_1,\,\dots,\,f_m)(x)\lesssim_{n}\sum_{i=1}^{3^n}\mathcal{A}_{m;\,\mathcal{S}_i,\,L(\log L)^{\vec{\beta}}}(f_1,\,\dots,\,f_m)(x).$$
Thus, Theorem \ref{t1.1} follows from Theorem \ref{t3.1}, Lemma \ref{l3.2} and the estimate (2.4) directly.\qed

\medskip

{\it Proof of Theorem \ref{t1.2}}. By the generalization of H\"older's inequality (see \cite{rr}), we know that
$$\big\langle |b_i(x)-\langle b_i\rangle_{Q}|\|\{f_i^k\}\|_{l^{q_i}}\big\rangle_{Q}\lesssim \big\|\|\{f_i^k\}\|_{l^{q_i}}\big\|_{L(\log L)^{\frac{1}{s_i}},\,Q}.
$$For $N\in\mathbb{N}$ and bounded functions $\{f_1^k\}_{1\leq k\leq N}$,\,$\dots$, $\{f_m^k\}_{1\leq k\leq N}$ with compact supports, we know from  Theorem \ref{t3.2} that there exists a $\frac{1}{2}\frac{1}{3^n}$-sparse of family $\mathcal{S}$ such that for a. e. $x\in\mathbb{R}^n$,
\begin{eqnarray}&&\|\{T_{\vec{b}}(f_1^k,\dots,f_m^k)(x)\}\|_{l^q}\lesssim\sum_{i=1}^m\mathcal{A}_{m;\mathcal{S},L(\log L)^{\vec{\beta}_i}}\big(\|\{f_1^k\}\|_{l^{q_1}},\dots,\|\{f_m^k\}\|_{l^{q_m}}\big)(x)\\
&&\qquad\qquad+\mathcal{A}_{m;\mathcal{S},\,\vec{b}}\big(\|\{f_1^k\}\|_{l^{q_1}},\dots,\|\{f_m^k\}\|_{l^{q_m}}\big)(x),\nonumber\end{eqnarray}
with $\vec{\beta}_1=(\frac{1}{s_1},\,0,\,\dots,\,0),$ $\dots,\,\vec{\beta}_m=(0,\,\dots,\,0,\,\frac{1}{s_m}).$
As in the proof of Lemma \ref{l3.2}, Theorem \ref{t1.1}, Theorem \ref{t1.2} follows from Theorem \ref{t2.1} and Theorem \ref{t2.2}. We omit the details for brevity.

\section{Proof of Theorem \ref{t1.3}}
For $\beta_1,\,\dots,\,\beta_m\in [0,\,\infty)$, let $\mathcal{M}_{L(\log L)^{\vec{\beta}}}$ be the maximal operator defined by
$$\mathcal{M}_{L(\log L)^{\vec{\beta}}}(f_1,\,\dots,\,f_m)(x)=\sup_{Q\ni x}\prod_{j=1}^m\|f_j\|_{L(\log L)^{\beta_j},\,Q}.$$
For the case of $\vec{\beta}=(0,\,\dots,\,0)$, we denote $\mathcal{M}_{L(\log L)^{\vec{\beta}}}$ by $\mathcal{M}$.
As in \cite{ler} and \cite{perez2}, we can prove that
\begin{lemma}\label{l4.1}
Let $\beta_1,\,\dots,\,\beta_m\in [0,\,\infty)$, $|\beta|=\beta_1+\dots+\beta_m$ and $\vec{w}=(w_1,\,\dots,\, w_m)\in A_{1,\dots,\,1}(\mathbb{R}^{mn})$. Then
for each $\lambda>0$,
\begin{eqnarray*}&&\nu_{\vec{w}}\big(\{x\in\mathbb{R}^n:\, \mathcal{M}_{L(\log L)^{\vec{\beta}}}(f_1,\,\dots,\,f_m)(x)>\lambda\}\big)
\\
&&\quad\lesssim \prod_{j=1}^m\Big(\int_{\mathbb{R}^n}\frac{|f_j(x)|}{\lambda^{\frac{1}{m}}}\log ^{|\beta|}\Big(1+\frac{|f_j(x)|}{\lambda^{\frac{1}{m}}}\Big)w_j(x){\rm d}x
\Big)^{\frac{1}{m}}.
\end{eqnarray*}
\end{lemma}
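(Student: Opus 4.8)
The plan is to reduce the weighted weak-type bound for $\mathcal{M}_{L(\log L)^{\vec{\beta}}}$ to the known unweighted multilinear $L(\log L)$ endpoint estimate combined with the fact that $\vec{w}\in A_{1,\dots,1}(\mathbb{R}^{mn})$ forces each $w_j\in A_1(\mathbb{R}^n)$ and, crucially, $\nu_{\vec w}=\prod_{j=1}^m w_j^{1/m}\in A_1(\mathbb{R}^n)$ by H\"older's inequality (this last observation is exactly the reason the $A_{\vec P}$ structure is convenient here). First I would recall the pointwise comparison $\mathcal{M}_{L(\log L)^{\vec{\beta}}}(f_1,\dots,f_m)(x)\lesssim \prod_{j=1}^m M_{L(\log L)^{\beta_j}}f_j(x)$, where $M_{L(\log L)^{\beta}}$ is the one-variable Orlicz maximal operator; then $\{\mathcal{M}_{L(\log L)^{\vec{\beta}}}\vec f>\lambda\}\subset\bigcup_{j=1}^m\{M_{L(\log L)^{\beta_j}}f_j>c\lambda^{1/m}\}$ after normalizing each factor, so by subadditivity of $\nu_{\vec w}(\cdot)$ and the elementary inequality $ab\le a^m+\dots$ (splitting the product level set) it suffices to control $\nu_{\vec w}(\{M_{L(\log L)^{\beta_j}}f_j>\mu\})$ for each fixed $j$.

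Next I would invoke the weighted weak-type $L\log L$ estimate for the Orlicz maximal operator against an $A_1$ weight: for $u\in A_1(\mathbb{R}^n)$,
\begin{equation*}
u\big(\{x:\,M_{L(\log L)^{\beta}}f(x)>\mu\}\big)\lesssim [u]_{A_1}\int_{\mathbb{R}^n}\frac{|f(x)|}{\mu}\log^{\beta}\Big(1+\frac{|f(x)|}{\mu}\Big)\,u(x)\,{\rm d}x,
\end{equation*}
which follows from the Calder\'on--Zygmund / Kolmogorov argument of P\'erez (see \cite{perez2}) exactly as in the unweighted case, using the $A_1$ property to pass from Lebesgue measure of the stopping cubes to $u$-measure. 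Applying this with $u=\nu_{\vec w}$, $\beta=\beta_j$ and $\mu\simeq\lambda^{1/m}$, and noting $\log^{\beta_j}(1+t)\le\log^{|\beta|}(1+t)$, gives for each $j$ a bound by $\int_{\mathbb{R}^n}\frac{|f_j|}{\lambda^{1/m}}\log^{|\beta|}(1+\frac{|f_j|}{\lambda^{1/m}})\,\nu_{\vec w}\,{\rm d}x$. The remaining point is to replace $\nu_{\vec w}$ on the right-hand side by $w_j$: here one uses that on the region where $|f_j|$ is large the weight $\nu_{\vec w}$ is comparable to $w_j$ in an averaged sense, or more directly one argues as in \cite{ler}, first proving the estimate for $m$ copies of the same geometry and then optimizing, so that the product over $j$ of the $1/m$-th powers of $\int \frac{|f_j|}{\lambda^{1/m}}\log^{|\beta|}(1+\cdots)\,w_j\,{\rm d}x$ emerges; the passage from $\nu_{\vec w}$ to the individual $w_j$ is carried out precisely by H\"older's inequality with exponents $m$, exploiting $\nu_{\vec w}=\prod w_j^{1/m}$.

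The main obstacle I anticipate is not the single-variable weighted weak-type inequality (which is classical) but the bookkeeping that turns the sum $\sum_{j=1}^m \nu_{\vec w}(\{M_{L(\log L)^{\beta_j}}f_j>c\lambda^{1/m}\})$ into the \emph{product} $\prod_{j=1}^m(\cdots)^{1/m}$ appearing in the statement, while simultaneously trading $\nu_{\vec w}$ for $w_j$ in the $j$-th factor. The clean way to handle this is to follow the scheme of \cite{ler}: decompose via a single Calder\'on--Zygmund stopping-time adapted to $\prod_j \|f_j\|_{L(\log L)^{\beta_j},Q}$ simultaneously (rather than estimating each factor separately), so that on each stopping cube $Q$ one gets $|Q|\lesssim \lambda^{-1}\prod_j \|f_j\|_{L(\log L)^{\beta_j},Q}^{?}$ and then $\nu_{\vec w}(Q)\lesssim [\vec w]_{A_{1,\dots,1}}\,\lambda^{-1/?}\cdots$, and finally apply H\"older in the sum over disjoint cubes. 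This avoids any loss and directly produces the geometric-mean form; I would present it in that order, relegating the verification of the Orlicz-averaged H\"older estimates to a reference to \cite{rr} and \cite{perez2}.
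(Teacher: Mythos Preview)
The paper itself supplies no argument for Lemma~\ref{l4.1}; it simply writes ``As in \cite{ler} and \cite{perez2}, we can prove that\dots'', so the intended proof is the simultaneous Calder\'on--Zygmund stopping-time argument carried out for the multilinear maximal function in \cite{ler} (Theorem~3.3 there) and adapted to Orlicz averages as in \cite{perez2}. Your final paragraph identifies exactly this route, and that part of your plan is correct and coincides with the paper's approach.

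The first two thirds of your plan, however, should be discarded rather than presented as a warm-up. The factoring step $\mathcal{M}_{L(\log L)^{\vec\beta}}(\vec f)\lesssim\prod_j M_{L(\log L)^{\beta_j}}f_j$ followed by the inclusion $\{\prod_j\cdots>\lambda\}\subset\bigcup_j\{M_{L(\log L)^{\beta_j}}f_j>c\lambda^{1/m}\}$ leads to a \emph{sum} $\sum_j\int\Phi_{\beta_j}(|f_j|/\lambda^{1/m})\,\nu_{\vec w}$, and the obstacle you name is not just bookkeeping---it is fatal. Your proposed fix via H\"older, using $\nu_{\vec w}=\prod_k w_k^{1/m}$, gives
\[
\int g\,\nu_{\vec w}\le\prod_{k=1}^m\Bigl(\int g\,w_k\Bigr)^{1/m},
\]
but here $g=\Phi_{\beta_j}(|f_j|/\lambda^{1/m})$ depends on the single index $j$, so the right-hand side involves $\int\Phi_{\beta_j}(|f_j|/\lambda^{1/m})\,w_k$ for \emph{every} $k$, not just $k=j$. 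There is no control on the cross terms $k\neq j$, and no normalization or homogeneity trick removes them. The suggestion that ``$\nu_{\vec w}$ is comparable to $w_j$ in an averaged sense'' is also false in general.

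So write up only the simultaneous stopping argument: cover the super-level set by maximal (dyadic) cubes $Q$ on which $\prod_j\|f_j\|_{L(\log L)^{\beta_j},Q}>\lambda$; use the $A_{1,\dots,1}$ condition in the form $\nu_{\vec w}(Q)\lesssim|Q|\prod_j(\inf_Q w_j)^{1/m}$; bound $|Q|$ using the stopping condition and the definition of the Luxemburg norm; and conclude by H\"older over the disjoint family $\{Q\}$, exactly as in \cite{ler} with the Orlicz modifications of \cite{perez2}. That argument produces the product of $1/m$-th powers directly, with each $f_j$ correctly paired with $w_j$.
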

The following conclusion was established by Lerner et al. in \cite{ler3}.

\begin{lemma}\label{l4.2}
Let $\beta\in [0,\,\infty)$ and $\mathcal{S}$ be a sparse family of cubes. Then for each fixed $\lambda>0$,
$$|\{x\in\mathbb{R}^n:\,\mathcal{A}_{\mathcal{S},\,L(\log L)^{\beta}}f(x)>\lambda\}|\lesssim \int_{\mathbb{R}^n}
\frac{|f(x)|}{\lambda}\log^{\beta}\Big(1+\frac{|f(x)|}{\lambda}\Big){\rm d}x,$$
and for $b\in {\rm BMO}(\mathbb{R}^n)$,
$$|\{x\in\mathbb{R}^n:\,\mathcal{A}_{\mathcal{S},\,b}f(x)>\lambda\}|\lesssim \lambda^{-1}\int_{\mathbb{R}^n}
|f(x)|{\rm d}x.$$
\end{lemma}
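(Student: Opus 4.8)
The plan is to prove both inequalities by a dyadic Calder\'on--Zygmund decomposition combined with the boundedness properties of the sparse operators already recorded in Section 2. Throughout one uses that the Luxemburg functional is positively homogeneous, $\|cf\|_{L(\log L)^{\beta},Q}=c\,\|f\|_{L(\log L)^{\beta},Q}$ for $c>0$, so that in both parts one may normalise $\lambda=1$ and assume $f\ge 0$; fix a dyadic grid $\mathscr D$ with $\mathcal S\subset\mathscr D$, and let $M^{\mathscr D}$ denote the dyadic Hardy--Littlewood maximal operator.

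For the second estimate I would decompose $f$ at height $1$: let $\{Q_j\}$ be the maximal cubes of $\mathscr D$ with $\langle f\rangle_{Q_j}>1$, so $\Omega:=\bigcup_jQ_j=\{M^{\mathscr D}f>1\}$ satisfies $|\Omega|\lesssim\|f\|_{L^1}$, and write $f=g+\sum_jh_j$ with $g=f\chi_{\mathbb R^n\setminus\Omega}+\sum_j\langle f\rangle_{Q_j}\chi_{Q_j}$ (so $0\le g\lesssim 1$ a.e.\ and $g\in L^1\cap L^\infty$) and $h_j=(f-\langle f\rangle_{Q_j})\chi_{Q_j}$, which has vanishing mean on $Q_j$. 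Then $\mathcal A_{\mathcal S,b}f=\mathcal A_{\mathcal S,b}g+\sum_j\mathcal A_{\mathcal S,b}h_j$. For the good part one uses Chebyshev together with the $L^2(\mathbb R^n)$ boundedness of $\mathcal A_{\mathcal S,b}$ with bound $\lesssim\|b\|_{\rm BMO}$ — the $m=1$, $w\equiv1$ case of Theorem \ref{t2.2} — and $\|g\|_{L^2}^2\le\|g\|_{L^\infty}\|g\|_{L^1}\lesssim\|f\|_{L^1}$. For the bad part the key point is that, since any $Q\in\mathscr D$ is nested with or disjoint from each $Q_j$, one has $\langle h_j\rangle_Q=0$ unless $Q\subsetneq Q_j$; hence each cube entering $\mathcal A_{\mathcal S,b}h_j$ lies strictly inside $Q_j$, so $\mathcal A_{\mathcal S,b}h_j$ is supported in $Q_j$ and $\sum_j\mathcal A_{\mathcal S,b}h_j$ is supported in $\Omega$. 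Consequently $|\{\sum_j\mathcal A_{\mathcal S,b}h_j>1/2\}|\le|\Omega|\lesssim\|f\|_{L^1}$, and combining the two pieces gives the claimed bound. (Unlike for $[b,T]$ itself, no logarithmic loss appears, because the bad part only needs to be controlled through the measure of its support.)

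For the first estimate the same argument works verbatim when $\beta=0$: there $\|f\|_{L(\log L)^0,Q}=\langle f\rangle_Q$, $\int f\log^0(1+f)=\|f\|_{L^1}$, and one uses the $L^2$ boundedness of the plain sparse operator $\mathcal A_{\mathcal S}$, i.e.\ (2.4) with $m=1$, $w\equiv1$. For $\beta>0$ two modifications are needed. First, one must run the stopping construction at the level of the Orlicz maximal operator $M^{\mathscr D}_{L(\log L)^{\beta}}g(x)=\sup_{Q\ni x,\,Q\in\mathscr D}\|g\|_{L(\log L)^{\beta},Q}$: with $\{Q_j\}$ maximal such that $\|f\|_{L(\log L)^{\beta},Q_j}>1$, the exceptional set $\Omega=\bigcup_jQ_j=\{M^{\mathscr D}_{L(\log L)^{\beta}}f>1\}$ has $|\Omega|\lesssim\int f\log^{\beta}(1+f)$ by the classical weak-type endpoint for $M_{L(\log L)^{\beta}}$ (which is pointwise comparable to the $(\lceil\beta\rceil+1)$-fold iterate of $M$). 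The sparse sum then splits according to whether a cube $Q\in\mathcal S$ lies inside some $Q_j$ or not; the first part is supported in $\Omega$, while each cube of the second part has $\|f\|_{L(\log L)^{\beta},Q}\le 1$ and is handled — this is the second modification — by a dyadic-level decomposition of the sparse sum, the pointwise bound $\|f\|_{L(\log L)^{\beta},Q}\lesssim_{\varepsilon}\langle f^{1+\varepsilon}\rangle_Q^{1/(1+\varepsilon)}$ from (2.7), the geometric decay of cube sizes along chains of a sparse family, and an $L^{1+\varepsilon}$ estimate for the resulting sparse operator, optimising in $\varepsilon$.

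The step I expect to be the main obstacle is precisely this last one. When $\beta>0$ the Luxemburg functional is no longer linear, so the bad part of the Calder\'on--Zygmund decomposition does not enjoy any usable cancellation in the Orlicz norm and cannot simply be split off on a small-support set as in the $\beta=0$ case; the contribution of the ``small'' cubes must be summed carefully — using the sparseness of $\mathcal S$ quantitatively, together with the localized weak-type bound for $M_{L(\log L)^{\beta}}$ — so that the final estimate reproduces exactly $\int f\log^{\beta}(1+f)$ and not a weaker quantity such as $\|f\|_{L^1}$.
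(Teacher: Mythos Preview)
The paper does not actually prove Lemma~\ref{l4.2}: it simply records that ``the following conclusion was established by Lerner et al.\ in \cite{ler3}'' and moves on. So there is no argument in the paper to compare against; your proposal is an attempt to supply a proof where the authors chose to cite one.

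Your Calder\'on--Zygmund scheme is correct for the second inequality and for the first with $\beta=0$. In both cases the relevant sparse operator $f\mapsto\sum_{Q}\langle f\rangle_Q\,c_Q(x)\chi_Q(x)$ (with $c_Q\equiv1$ or $c_Q(x)=|b(x)-\langle b\rangle_Q|$) is \emph{linear} in $f$; the bad pieces $h_j=(f-\langle f\rangle_{Q_j})\chi_{Q_j}$ have mean zero, so every cube $Q\in\mathcal S$ that is not strictly contained in some $Q_j$ contributes nothing, and the bad part is supported in $\Omega$. The good part is handled by the $L^2$ bound from (2.4) (for $\mathcal A_{\mathcal S}$) or Theorem~\ref{t2.2} with $m=1$, $w\equiv1$ (for $\mathcal A_{\mathcal S,b}$). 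This is exactly the standard argument in \cite{ler3}.

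For $\beta>0$ there is a genuine gap. The operator $f\mapsto\sum_Q\|f\|_{L(\log L)^\beta,Q}\chi_Q$ is \emph{not} linear (nor sublinear up to a constant) in $f$, so the mean-zero cancellation of the bad pieces is lost; you recognise this and propose to split the cubes rather than the function. That reduction to the ``outer'' cubes $Q$ with $\|f\|_{L(\log L)^\beta,Q}\le1$ is fine, but your plan for summing those terms --- ``use (2.7), geometric decay along sparse chains, an $L^{1+\varepsilon}$ bound, and optimise in $\varepsilon$'' --- is not carried out and, as stated, does not obviously produce the required right-hand side $\int|f|\log^\beta(1+|f|)$. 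Two warnings: (i) sparseness gives $\sum_{Q'\subset Q}|Q'|\lesssim|Q|$ but \emph{not} geometric decay of $|Q_k|$ along an individual chain, so that part of the heuristic is wrong; (ii) even for bounded $g$ one has $\|g\|_{L(\log L)^\beta,Q}\approx\langle g\rangle_Q\log^\beta(e/\langle g\rangle_Q)$, which can be much larger than $\langle g\rangle_Q$, so one cannot simply pass from the Orlicz average back to an $L^1$ average on the good part. The proof in \cite{ler3} handles this by a more careful stopping-time argument tailored to the Orlicz functional; your sketch does not yet contain the missing idea.
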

\begin{lemma}\label{l4.3}Let $\varrho\in [0,\,\infty)$ and $\delta\in (0,\,1)$, $T$ be a sublinear operator  which satisfies the weak type estimate that
$$|\{x\in\mathbb{R}^n:\, |Tf(x)|>\lambda\}|\lesssim \int_{\mathbb{R}^n}\frac{|f(x)|}{\lambda}\log ^{\varrho}
\Big(1+\frac{|f(x)|}{\lambda}\Big){\rm d}x.$$
Then  for any cube $I$ and appropriate function $f$ with ${\rm supp}\,f \subset I$,
\begin{eqnarray}\Big(\frac{1}{|I|}\int_I|Tf(x)|^{\delta}{\rm d}x\Big)^{\frac{1}{\delta}}\lesssim \|f\|_{L(\log L)^{\varrho},\,I}.\end{eqnarray}
\end{lemma}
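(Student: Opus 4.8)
The plan is to deduce the claimed $L^{\delta}$–$L(\log L)^{\varrho}$ estimate from a Kolmogorov‑type truncation of the distribution function of $Tf$, following the argument used for the analogous statements in \cite{ler,perez2}. Since $T$ is sublinear it is positively homogeneous, so both sides of the inequality scale linearly in $f$; replacing $f$ by $f/\|f\|_{L(\log L)^{\varrho},\,I}$ we may therefore assume $\|f\|_{L(\log L)^{\varrho},\,I}=1$, which by definition of the Luxemburg norm gives $\int_I|f(x)|\log^{\varrho}(1+|f(x)|){\rm d}x\le |I|$. It then suffices to show $\int_I|Tf(x)|^{\delta}{\rm d}x\lesssim_{\delta,\varrho}|I|$.

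First I would write, by the layer‑cake formula,
$$\int_I|Tf(x)|^{\delta}{\rm d}x=\delta\int_0^{\infty}\lambda^{\delta-1}\big|\{x\in I:\,|Tf(x)|>\lambda\}\big|\,{\rm d}\lambda,$$
and split the $\lambda$-integral at $\lambda=1$. On $0<\lambda\le1$ I bound the measure trivially by $|I|$, which contributes exactly $|I|$. On $\lambda>1$ I use the weak‑type hypothesis together with $\mathrm{supp}\,f\subset I$ to get $|\{x\in I:|Tf(x)|>\lambda\}|\lesssim\lambda^{-1}\int_I|f(x)|\log^{\varrho}(1+|f(x)|/\lambda){\rm d}x$, and then apply Fubini to recast the contribution of this range as $\lesssim\int_I|f(x)|\,\Psi(|f(x)|)\,{\rm d}x$, where $\Psi(t)=\int_1^{\infty}\lambda^{\delta-2}\log^{\varrho}(1+t/\lambda){\rm d}\lambda$.

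The key (and essentially only nontrivial) step is the elementary pointwise bound $\Psi(t)\lesssim_{\delta,\varrho}1+\log^{\varrho}(1+t)$ for all $t\ge0$, which I would establish by splitting the $\lambda$-integral at $\lambda=\max\{1,t\}$: on $\lambda\ge\max\{1,t\}$ one has $\log(1+t/\lambda)\le t/\lambda$, so the integrand is at most $t^{\varrho}\lambda^{\delta-2-\varrho}$, which is integrable precisely because $\delta<1$ (so $\delta-2-\varrho<-1$) and contributes $O_{\delta,\varrho}(1)$; on $1\le\lambda\le t$, which occurs only when $t>1$, one has $\log(1+t/\lambda)\le\log 2+\log t$, so the integrand is at most $\lambda^{\delta-2}\log^{\varrho}(e+t)$ and contributes $\lesssim_{\delta,\varrho}\log^{\varrho}(e+t)\lesssim\log^{\varrho}(1+t)$. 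Feeding this back in, the $\lambda>1$ contribution is $\lesssim_{\delta,\varrho}\int_I|f|+\int_I|f|\log^{\varrho}(1+|f|)$; since $\int_I|f|\lesssim_{\varrho}|I|+\int_I|f|\log^{\varrho}(1+|f|)$ (splitting $I$ according to whether $|f|<1$ or $|f|\ge1$), both terms are $\lesssim_{\delta,\varrho}|I|$ under our normalization. Adding the two ranges of $\lambda$ gives $\int_I|Tf|^{\delta}\lesssim_{\delta,\varrho}|I|$, and undoing the normalization yields the lemma.

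The only obstacle I anticipate is bookkeeping the constants in $\Psi(t)$ — in particular checking that the bounds degenerate only as $\delta\uparrow1$ (through the factors $(1-\delta)^{-1}$ and $(1+\varrho-\delta)^{-1}$), which is harmless since $\delta$ and $\varrho$ are fixed; everything else is routine.
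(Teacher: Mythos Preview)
Your argument is correct and is the standard Kolmogorov‐type computation for such estimates. Note that the paper does not actually supply a proof of this lemma; it simply refers to \cite{guoen}, so there is no in‐paper argument to compare against. One small quibble: the assertion ``since $T$ is sublinear it is positively homogeneous'' conflates two properties that are often packaged together in harmonic analysis but are logically independent; your normalization step uses only the homogeneity. This is harmless here, both because the operators to which the lemma is applied (the sparse operators $\mathcal{A}_{\mathcal{S},L(\log L)^{\beta}}$ and $\mathcal{A}_{\mathcal{S},\vec b}$) are manifestly positively homogeneous, and because your proof goes through verbatim without any normalization if you split the layer‐cake integral at $\lambda_0=\|f\|_{L(\log L)^{\varrho},\,I}$ instead of at $\lambda=1$.
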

For the proof of Lemma \ref{l4.3}, see \cite{guoen}.
\begin{lemma}\label{l4.4}
Let $m\geq 2$ be an integer, $\mathscr{D}$ be a dyadic grid and $\mathcal{S}\subset \mathscr{D}$ be a finite sparse family. Then for
each fixed $I\in\mathscr{D}$ and  $\delta\in (0,\,\frac{1}{m})$.
\begin{eqnarray}&&\inf_{c\in\mathbb{C}}\Big(\frac{1}{|I|}\int_{I}|\mathcal{A}_{m;\,\mathcal{S},\,L(\log L)^{\vec{\beta}}}(f_1,\dots,f_m)(x)-c|^{\delta}{\rm d}x\Big)^{\frac{1}{\delta}}\lesssim_{\delta}
\prod_{j=1}^m\|f_j\|_{L(\log L)^{\beta_j},\,I};\end{eqnarray}
and for $b_i\in Osc_{{\rm exp}L^{s_i}}(\mathbb{R}^n)$ ($s_i\in [1,\,\infty)$, $i=1,\,\dots,\,m$), $\gamma\in (\delta,\,\frac{1}{m})$,
\begin{eqnarray}&&\inf_{c\in\mathbb{C}}\Big(\frac{1}{|I|}\int_{I}|\mathcal{A}_{m;\,\mathcal{S},\,\vec{b}}(f_1,\dots,f_m)(x)-c|^{\delta}{\rm d}x\Big)^{\frac{1}{\delta}}\\
&&\quad\lesssim_{\delta} \inf_{y\in I}M_{\gamma}\big(\mathcal{A}_{m;\,\mathcal{S}}(f_1,\,\dots,\,f_m)\big)(y)+
\prod_{j=1}^m\langle |f_j|\rangle_I.\nonumber\end{eqnarray}
\end{lemma}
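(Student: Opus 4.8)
The plan is to reduce both local mean oscillation estimates to the pointwise/averaged estimates already available for single-scale sparse operators. First consider \eqref{4.3} (the estimate for $\mathcal{A}_{m;\,\mathcal{S},\,L(\log L)^{\vec\beta}}$). Fix $I\in\mathscr{D}$ and split the family $\mathcal{S}$ into $\mathcal{S}_{\rm in}=\{Q\in\mathcal{S}:\,Q\subseteq I\}$ and $\mathcal{S}_{\rm out}=\{Q\in\mathcal{S}:\,Q\supsetneq I\}$ (cubes from a dyadic grid that meet $I$ are nested with $I$, so these are the only two cases up to null sets). The contribution of $\mathcal{S}_{\rm out}$ to $\mathcal{A}_{m;\,\mathcal{S},\,L(\log L)^{\vec\beta}}(f_1,\dots,f_m)$ is \emph{constant} on $I$, so it is annihilated by taking the infimum over $c$. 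For the inner part, set $c=0$ and estimate
\[
\Big(\frac{1}{|I|}\int_I\Big(\sum_{Q\in\mathcal{S}_{\rm in}}\prod_{j=1}^m\|f_j\|_{L(\log L)^{\beta_j},Q}\chi_Q(x)\Big)^{\delta}{\rm d}x\Big)^{1/\delta}.
\]
Here I would replace each $f_j$ by $f_j\chi_{3I}$ (since only cubes inside $I$ occur, the averages $\|f_j\|_{L(\log L)^{\beta_j},Q}$ for $Q\subseteq I$ only see $f_j$ on $I\subseteq 3I$), and then invoke that $\mathcal{A}_{m;\,\mathcal{S}_{\rm in},\,L(\log L)^{\vec\beta}}$ maps $L^{1}(\log L)^{\beta_1}\times\cdots\times L^{1}(\log L)^{\beta_m}$ into $L^{1/m,\infty}$ — this is the multilinear analogue of the first estimate in Lemma~\ref{l4.2}, which follows by the same sparse-family argument (or from Lemma~\ref{l4.3} applied componentwise together with the Carleson embedding built into sparseness). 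Since $\delta<1/m$, Kolmogorov's inequality converts this weak-type bound into the $L^\delta$ average bound, giving $\lesssim\prod_j\|f_j\chi_{3I}\|_{L(\log L)^{\beta_j},3I}\lesssim\prod_j\|f_j\|_{L(\log L)^{\beta_j},I}$ after adjusting the cube by the $3^n$ dilation (absorbed into the implied constant).

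For \eqref{4.4} the same in/out splitting applies to $\mathcal{A}_{m;\,\mathcal{S},\,\vec b}(f_1,\dots,f_m)(x)=\sum_{Q\in\mathcal{S}}\big(\sum_{i=1}^m|b_i(x)-\langle b_i\rangle_Q|\big)\prod_j\langle f_j\rangle_Q\chi_Q(x)$, but now the outer part is \emph{not} constant on $I$ because of the factor $b_i(x)-\langle b_i\rangle_Q$. I would write, for $Q\supsetneq I$,
\[
b_i(x)-\langle b_i\rangle_Q=\big(b_i(x)-\langle b_i\rangle_I\big)+\big(\langle b_i\rangle_I-\langle b_i\rangle_Q\big),
\]
choosing the competitor constant $c=\sum_{i=1}^m\sum_{Q\in\mathcal{S}_{\rm out}}(\langle b_i\rangle_I-\langle b_i\rangle_Q)\prod_j\langle f_j\rangle_Q$. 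The residual outer term then carries the factor $|b_i(x)-\langle b_i\rangle_I|$ times $\sum_{Q\in\mathcal{S}_{\rm out}}\prod_j\langle f_j\rangle_Q\chi_Q(x)$; on $I$ this latter sum is pointwise $\le\mathcal{A}_{m;\,\mathcal{S}}(f_1,\dots,f_m)(x)$ and, being a sum over cubes containing $I$, is comparable to its own value at any point, hence $\le\inf_{y\in I}M_\gamma(\mathcal{A}_{m;\,\mathcal{S}}(f_1,\dots,f_m))(y)$ up to constants. I then use H\"older with exponents $\gamma/\delta>1$ and its conjugate to separate $|b_i(x)-\langle b_i\rangle_I|$ (which lies in $L^r(I,\,dx/|I|)$ for all $r<\infty$ with norm $\lesssim\|b_i\|_{Osc_{{\rm exp}L^{s_i}}}\le 1$) from the sparse piece, producing exactly the first term on the right of \eqref{4.4}. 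For the inner part $\mathcal{S}_{\rm in}$, take $c=0$, split $b_i(x)-\langle b_i\rangle_Q$ the same way, and treat the $|b_i(x)-\langle b_i\rangle_I|$ factor again by H\"older in $x$, while the $|\langle b_i\rangle_I-\langle b_i\rangle_Q|\lesssim 1+\log(|I|/|Q|)$ factor is absorbed by passing from $\mathcal{A}_{m;\,\mathcal{S}_{\rm in}}$ to an $L(\log L)$-type sparse operator — this uses the finiteness of $\mathcal{S}$ and the standard estimate $|\langle b_i\rangle_I-\langle b_i\rangle_Q|\lesssim\log(e+|I|/|Q|)$ for $Q\subseteq I$ together with the weak-type bound for $\mathcal{A}_{m;\,\mathcal{S},\,L(\log L)^{\vec\gamma}}$ as above; one then bounds the inner contribution by $\prod_j\langle|f_j|\rangle_I$.

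The main obstacle is the outer sum in \eqref{4.4}: one must verify that $\sum_{Q\in\mathcal{S}_{\rm out}}\prod_j\langle f_j\rangle_Q\chi_Q$ is genuinely controlled on $I$ by $M_\gamma(\mathcal{A}_{m;\,\mathcal{S}})$ and not merely by $\mathcal{A}_{m;\,\mathcal{S}}$ itself — the point being that $\mathcal{A}_{m;\,\mathcal{S}}(f_1,\dots,f_m)$ need not be locally bounded, so one cannot take its $L^\infty(I)$ norm, and it is precisely the $\gamma$-th power maximal function (with $\gamma<1/m$) that makes the argument legitimate; this is why the statement carries $M_\gamma$ rather than an essential supremum, and matching that choice of $\gamma$ against $\delta$ in the H\"older step is the delicate bookkeeping. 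The $L(\log L)$-absorption of the $\log(|I|/|Q|)$ factors in the inner part is routine but must be done carefully so that the exponent on the logarithm stays $\beta_j$ (resp.\ $1/s_i$) and is not inflated.
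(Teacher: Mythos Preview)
Your argument for the first inequality is essentially the paper's: split $\mathcal{S}$ into cubes containing $I$ (giving a constant killed by the infimum) and cubes contained in $I$, then bound the inner part by a weak-type estimate plus Kolmogorov. The only difference is that the paper does not invoke a multilinear weak-type bound for $\mathcal{A}_{m;\mathcal{S},L(\log L)^{\vec\beta}}$ (none is stated); instead it uses the pointwise factorization $\mathcal{A}_{m;\mathcal{S},L(\log L)^{\vec\beta}}(f_1,\dots,f_m)\le\prod_j\mathcal{A}_{\mathcal{S},L(\log L)^{\beta_j}}f_j$, applies H\"older in $x$ with exponents $m\delta$, and then uses Lemma~\ref{l4.2} and Lemma~\ref{l4.3} on each \emph{linear} factor.

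For the second inequality your handling of the inner part has a real gap. The step ``absorb $|\langle b_i\rangle_I-\langle b_i\rangle_Q|\lesssim\log(|I|/|Q|)$ by passing to an $L(\log L)$-type sparse operator'' does not work as written: the operators $\mathcal{A}_{m;\mathcal{S},L(\log L)^{\vec\gamma}}$ carry Luxemburg norms $\|f_j\|_{L(\log L)^{\gamma_j},Q}$ of the \emph{functions}, while $\log(|I|/|Q|)$ is a purely geometric factor depending on the cube; there is no inequality $\log(|I|/|Q|)\prod_j\langle f_j\rangle_Q\lesssim\prod_j\|f_j\|_{L(\log L)^{\gamma_j},Q}$, and sparseness alone does not force enough decay along the chain $Q\subset I$ to sum the logarithm.

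The paper sidesteps this entirely by performing the $b$-split \emph{globally} before any in/out decomposition: for every $Q\in\mathcal{S}$ write $|b_i(x)-\langle b_i\rangle_Q|\le|b_i(x)-\langle b_i\rangle_I|+|\langle b_i\rangle_I-\langle b_i\rangle_Q|$, obtaining
\[
|\mathcal{A}_{m;\mathcal{S},\vec b}(\vec f)(x)-c|\le\sum_i|b_i(x)-\langle b_i\rangle_I|\,\mathcal{A}_{m;\mathcal{S}}(\vec f)(x)
+\Big|\sum_{Q\in\mathcal{S}}\Big(\sum_i|\langle b_i\rangle_I-\langle b_i\rangle_Q|\Big)\prod_j\langle f_j\rangle_Q\chi_Q(x)-c\Big|.
\]
The first term is handled exactly by your H\"older-then-$M_\gamma$ argument. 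In the second term one chooses $c$ to cancel the cubes $Q\supsetneq I$, and for the remaining inner sum one uses the \emph{reverse} triangle inequality $|\langle b_i\rangle_I-\langle b_i\rangle_Q|\le|b_i(x)-\langle b_i\rangle_Q|+|b_i(x)-\langle b_i\rangle_I|$ to reconstitute $\mathcal{A}_{m;\mathcal{S},\vec b}(f_1\chi_I,\dots,f_m\chi_I)$ (plus another term of the first type, now with localized $f_j$). These are then controlled via the pointwise factorization into linear sparse operators, H\"older with exponents $m\delta$, and the \emph{second} part of Lemma~\ref{l4.2} (the weak-$L^1$ bound for $\mathcal{A}_{\mathcal{S},b}$) combined with Lemma~\ref{l4.3}, yielding $\prod_j\langle|f_j|\rangle_I$. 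No control of $\log(|I|/|Q|)$ is ever needed.
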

\begin{proof} Without loss of generality, we may assume that the functions $f_1,\,\dots,\,f_m$ are nonnegative.
Let $c_0=\sum_{Q\supset I}\prod_{j=1}^m\|f_j\|_{L(\log L)^{\beta_j},\,Q}$. As in \cite{csmp}, it  follows that
\begin{eqnarray*}
&&\int_{I}|\mathcal{A}_{m;\,\mathcal{S},\,L(\log L)^{\vec{\beta}}}(f_1,\dots,f_m)(x)-c_0|^{\delta}{\rm d}x\\
&&\quad\lesssim\int_{I}\Big|\sum_{Q\in\mathcal{S},\,Q\subset I}\|f_j\|_{L(\log L)^{\beta_j},\,Q}\chi_{Q}(x)\Big|^{\delta}{\rm d}x\\
&&\quad\lesssim\int_{I}\Big|\mathcal{A}_{m;\,\mathcal{S},L(\log L)^{\vec{\beta}}}(f_1\chi_I,\dots,\,f_m\chi_{I})(x)\Big|^{\delta}{\rm d}x.
\end{eqnarray*}
On the other hand, by Lemma \ref{l4.2} and Lemma \ref{l4.3}, we know that
\begin{eqnarray*}
\Big(\frac{1}{|I|}\int_{I}\Big|\mathcal{A}_{\mathcal{S},L(\log L)^{\beta_j}}(f_j\chi_I)(x)\Big|^{m\delta}{\rm d}x\Big)^{\frac{1}{m\delta}}\lesssim\|f_j\|_{L(\log L)^{\beta_j},\,I}.
\end{eqnarray*}
This, together with the fact that
$$\mathcal{A}_{m;\,\mathcal{S},\,L(\log L)^{\vec{\beta}}}(f_1,\dots,f_m)(x)\lesssim \prod_{j=1}^m\mathcal{A}_{\mathcal{S},\,L(\log L)^{\beta_j}}f_j(x).
$$
and H\"older's inequality, leads to that
\begin{eqnarray*}
&&\Big(\frac{1}{|I|}\int_{I}\Big|\mathcal{A}_{m;\,\mathcal{S},L(\log L)^{\vec{\beta}}}(f_1\chi_I,\dots,\,f_m\chi_{I})(x)\Big|^{\delta}{\rm d}x\Big)^{\frac{1}{\delta}}\\
&&\quad\lesssim\prod_{j=1}^m\Big(\frac{1}{|I|}\int_{I}\Big|\mathcal{A}_{\mathcal{S},L(\log L)^{\beta_j}}(f_j\chi_I)(x)\Big|^{m\delta}{\rm d}x\Big)^{\frac{1}{m\delta}}\\
&&\quad\lesssim\prod_{j=1}^m\|f_j\|_{L(\log L)^{\beta_j},\,I}.
\end{eqnarray*}

To prove (4.2), we first observe that, for each constant $c\in\mathbb{C}$ and a cube $I\subset \mathscr{D}$,
\begin{eqnarray*}
&&\big|\mathcal{A}_{m;\,\mathcal{S},\,\vec{b}}(f_1,\dots,f_m)(x)-c\big|\\
&&\quad\leq \sum_{i=1}^m|b_i(x)-\langle b_i\rangle_I|\mathcal{A}_{m;\,\mathcal{S}}(f_1,\,\dots,\,f_m)(x)\\
&&\qquad+\Big|
\sum_{Q\in\mathcal{S}}\Big(\sum_{i=1}^m|\langle b_i\rangle_I-\langle b_i\rangle_Q|\Big)\prod_{j=1}^m\langle f_j\rangle_Q\chi_{Q}(x)-c\Big|.
\end{eqnarray*}
Therefore, Let $c_1=\sum_{Q\in\mathcal{S},\,Q\supset I}\Big(\sum_{i=1}^m|\langle b_i\rangle_I-\langle b_i\rangle_Q|\Big)\prod_{j=1}^m\langle f_j\rangle_Q$, we thus have  that
\begin{eqnarray*}&&\inf_{c\in\mathbb{C}}\Big(\frac{1}{|I|}\int_{I}|\mathcal{A}_{m;\,\mathcal{S},\,\vec{b}}(f_1,\dots,f_m)(x)-c|^{\delta}{\rm d}x\Big)^{\frac{1}{\delta}}\\
&&\quad\lesssim\Big(\frac{1}{|I|}\int_{I}\big|\sum_{i=1}^m|b_i(x)-\langle b_i\rangle_I\big|\mathcal{A}_{m;\,\mathcal{S}}(f_1,\,\dots,\,f_m)(x)|^{\delta}{\rm d}x\Big)^{\frac{1}{\delta}}\\
&&\qquad+\Big(\frac{1}{|I|}\int_{I}\Big|
\sum_{Q\in\mathcal{S}}\Big(\sum_{i=1}^m|\langle b_i\rangle_I-\langle b_i\rangle_Q|\Big)\prod_{j=1}^m\langle f_j\rangle_Q\chi_{Q}(x)-c_1\Big|^{\delta}{\rm d}x\Big)^{\frac{1}{\delta}}
\end{eqnarray*}
Let $\gamma\in (\delta,\,\frac{1}{m})$. It follows from H\"older's inequality that
\begin{eqnarray*}
&&\Big(\frac{1}{|I|}\int_{I}\Big|\sum_{i=1}^m\big|b_i(x)-\langle b_i\rangle_I\big|\mathcal{A}_{m;\,\mathcal{S}}(f_1,\,\dots,\,f_m)(x)\Big|^{\delta}{\rm d}x\Big)^{\frac{1}{\delta}}\\
&&\quad\lesssim\Big(\frac{1}{|I|}\int_{I}|\mathcal{A}_{m;\,\mathcal{S}}(f_1,\,\dots,\,f_m)(x)|^{\gamma}{\rm d}x\Big)^{\frac{1}{\gamma}}\\
&&\quad\lesssim \inf_{y\in I}M_{\gamma}\big(\mathcal{A}_{m;\,\mathcal{S}}(f_1,\,\dots,\,f_m)\big)(y).
\end{eqnarray*}
On the other hand, we deduce from H\"older's inequality, Lemma \ref{l4.2} and Lemma \ref{l4.3},  that
\begin{eqnarray*}&&\Big(\frac{1}{|I|}\int_{I}\Big|
\sum_{Q\in\mathcal{S},\,Q\subset I}\Big(\sum_{i=1}^m| \langle b_i\rangle_I-\langle b_i\rangle_Q|\Big)\prod_{j=1}^m\langle f_j\chi_{I}\rangle_Q\chi_{Q}(x)\Big|^{\delta}{\rm d}x\Big)^{\frac{1}{\delta}}\\
&&\quad\lesssim \Big(\frac{1}{|I|}\int_{I}\big(\mathcal{A}_{m;\,\mathcal{S},\,\vec{b}}(f_1\chi_{I},\dots,\,f_m\chi_I)(x)\big)^{\delta}{\rm d}x\Big)^{\frac{1}{\delta}}\\
&&\qquad+\Big(\frac{1}{|I|}\int_{I}\Big(\sum_{i=1}^m| b_i(x)-\langle b_i\rangle_I|\Big)\big (\mathcal{A}_{m;\,\mathcal{S}}(f_1\chi_I,\dots,\,f_m\chi_I)(x)\big)^{\delta}{\rm d}x\Big)^{\frac{1}{\delta}}\\
&&\quad\lesssim \prod_{j=1}^m\langle |f_j|\rangle_I.
\end{eqnarray*}
Combining the estimates above leads to (4.2).
\end{proof}

Let $\mathscr{D}$ be a dyadic grid. Associated with $\mathscr{D}$, define the sharp maximal function $M^{\sharp}_{\mathscr{D}}$ as
$$M^{\sharp}_{\mathscr{D}}f(x)=\sup_{Q\ni x\atop{Q\in\mathscr{D}}}\inf_{c\in\mathbb{C}}\frac{1}{|Q|}\int_{Q}|f(y)-c|{\rm d}y.$$
For $\delta\in (0,\,1)$, let $ M_{\mathscr{D},\,\delta}^{\sharp}f(x)=\big[M^{\sharp}_{\mathscr{D}}(|f|^{\delta})(x)\big]^{1/\delta}.$
Repeating the argument in \cite[p. 153]{ste2}, we can verify that if $u\in A_{\infty}(\mathbb{R}^n)$ and $\Phi$ is a increasing function on $[0,\,\infty)$ which satisfies that
$$\Phi(2t)\leq C\Phi(t),\,t\in [0,\,\infty),$$ then
\begin{eqnarray}&&\sup_{\lambda>0}\Phi(\lambda)u(\{x\in\mathbb{R}^n:|h(x)|>\lambda\})\lesssim
\sup_{\lambda>0}\Phi(\lambda)u(\{x\in\mathbb{R}^n:M_{\mathscr{D},\delta}^{\sharp}h(x)>\lambda\}),\end{eqnarray}
provided that $\sup_{\lambda>0}\Phi(\lambda)u(\{x\in\mathbb{R}^n:\,M_{\mathscr{D},\,\delta}h(x)>\lambda\})<\infty$.

\medskip

{\it Proof of Theorem \ref{t1.3}}. Let $\vec{\beta}_1=(\frac{1}{s_1},\,0,\dots,\,0)$, $\dots$, $\vec{\beta}_m=(0,\dots,0,\,\frac{1}{s_m})$. By the inequality (3.7) and the one-third trick, it suffices to prove that
for $\vec{w}=(w_1,\,\dots,\,w_m)\in A_{1,\,\dots,1}(\mathbb{R}^{mn})$,  $i=1,\,\dots,\,m$, dyadic grid $\mathscr{D}$ and sparse  family $\mathcal{S}\subset\mathscr{D}$,
\begin{eqnarray}&&\nu_{\vec{w}}(\{x\in\mathbb{R}^n:\,\mathcal{A}_{m;\,\mathcal{S},L(\log L)^{\vec{\beta}_i}}(f_1,\,\dots,\,f_m)(x)>1\})\\
&&\quad\lesssim\prod_{j=1}^m\Big(\int_{\mathbb{R}^n}
|f_j(y_j)|\log^{\frac{1}{s_i}}\big(1+|f_j(y_j)|\big)w_j(y_j){\rm d}y_j\Big)^{\frac{1}{m}},\nonumber
\end{eqnarray}
and
\begin{eqnarray}&&\nu_{\vec{w}}(\{x\in\mathbb{R}^n:\,\mathcal{A}_{m;\,\mathcal{S},\,\vec{b}}(f_1,\,\dots,\,f_m)(x)>1\})\lesssim\prod_{j=1}^m\|f_j\|_{L^{p_j}(\mathbb{R}^n,\,w_j)}^{\frac{1}{m}}.
\end{eqnarray}

We first prove (4.5). By a standard limit argument, it suffices to consider the case that the sparse family $\mathcal{S}$
is finite. Let $\delta\in (0,\,\frac{1}{m})$. The estimate (4.2) in Lemma \ref{l4.4} tells us that
$$M^{\sharp}_{\mathscr{D},\,\delta}\big(\mathcal{A}_{m;\,\mathcal{S},L(\log L)^{\vec{\beta}_i}}(f_1,\,\dots,\,f_m)\big)(x)
\lesssim \mathcal{M}_{L(\log L)^{\vec{\beta}_i}}(f_1,\,.\,\dots,\,f_m)(x).$$
Let $\psi_i(t)=t^{\frac{1}{m}}\log^{-\frac{1}{s_i}}(1+t^{-\frac{1}{m}})$. Lemma \ref{l4.1} now tells us that
\begin{eqnarray*}&&\sup_{t>0}\psi_i(t)\nu_{\vec{w}}\big(\{x\in\mathbb{R}^n:\,\mathcal{M}_{L(\log L)^{\vec{\beta}_i}}(f_1,\,\dots,\,f_m)\big)(x)>t\}\big)\\
&&\quad\lesssim
\prod_{j=1}^m\Big(\int_{\mathbb{R}^n}
|f_j(y_j)|\log^{\frac{1}{s_i}}\big(1+|f_j(y_j)|\big)w_j(y_j){\rm d}y_j\Big)^{\frac{1}{m}}.\end{eqnarray*}
This, via (4.4) and Lemma 4.1, implies that
\begin{eqnarray*}&&\nu_{\vec{w}}(\{x\in\mathbb{R}^n:\,\mathcal{A}_{m;\,\mathcal{S},\,L(\log L)^{\vec{\beta}_i}}(f_1,\dots,f_m)(x)>1\})\\
&&\quad\lesssim \sup_{t>0}\psi_i(t)\nu_{\vec{w}}\big(\{x\in\mathbb{R}^n:\,M^{\sharp}_{\mathscr{D},\,\delta}\big(\mathcal{A}_{m;\,\mathcal{S},\,L(\log L)^{\vec{\beta}_i}}(f_1,\dots,f_m)\big)(x)>t\}\big)\\
&&\quad\lesssim\prod_{j=1}^m\Big(\int_{\mathbb{R}^n}
|f_j(y_j)|\log^{\frac{1}{s_i}}\big(1+|f_j(y_j)|\big)w_j(y_j){\rm d}y_j\Big)^{\frac{1}{m}}.\nonumber
\end{eqnarray*}

We turn our attention to (4.6). Again we assume that the the sparse family $\mathcal{S}$ is finite.
Applying Lemma \ref{l4.4}, we see that for $\delta,\,\gamma$ with $0<\delta<\gamma<\frac{1}{m}$,
$$M^{\sharp}_{\mathscr{D},\,\delta}\big(\mathcal{A}_{m;\,\mathcal{S},\,\vec{b}}(f_1,\,\dots,\,f_m)\big)(x)\lesssim M_{\gamma}\big(\mathcal{A}_{m;\,\mathcal{S}}(f_1,\,\dots,\,f_m)\big)(x)+\mathcal{M}(f_1,\dots,f_m)(x).
$$
Recalling that $\nu_{\vec{w}}\in A_{\infty}(\mathbb{R}^{mn})$, we can choose $\delta$ and $\gamma$ in (4.3) small enough such that $\nu_{\vec{w}}\in A_{\frac{1}{m\gamma}}(\mathbb{R}^{mn})$.
It then follows from Lemma \ref{l3.0}, the inequality (4.2) and Lemma \ref{l4.1} that
\begin{eqnarray*}
&&\lambda^{\frac{1}{m}}\nu_{\vec{w}}(\{x\in\mathbb{R}^n:\,M_{\gamma}\big(\mathcal{A}_{m;\,\mathcal{S}}(f_1,\,\dots,\,f_m)\big)(x)>\lambda\})\\
&&\quad\lesssim \sup_{t>0}t^{\frac{1}{m}}\nu_{\vec{w}}(\{x\in\mathbb{R}^n:\,\mathcal{A}_{m;\,\mathcal{S}}(f_1,\,\dots,\,f_m)(x)>t\})\\
&&\quad\lesssim \sup_{t>0}t^{\frac{1}{m}}\nu_{\vec{w}}(\{x\in\mathbb{R}^n:\,M^{\sharp}_{\mathscr{D},\,\delta}\big(\mathcal{A}_{m;\,\mathcal{S}}(f_1,\,\dots,\,f_m)\big)(x)>t\})\\
&&\quad\lesssim \sup_{t>0}t^{\frac{1}{m}}\nu_{\vec{w}}(\{x\in\mathbb{R}^n:\,\mathcal{M}(f_1,\,\dots,\,f_m)(x)>t\})\\
&&\quad\lesssim\prod_{j=1}^m\|f_j\|_{L^{p_j}(\mathbb{R}^n,\,w_j)}^{\frac{1}{m}},
\end{eqnarray*}
This, toether with (4.4),leads to  that
\begin{eqnarray*}&&\nu_{\vec{w}}(\{x\in\mathbb{R}^n:\,\mathcal{A}_{m;\,\mathcal{S},\,\vec{b}}(f_1,\,\dots,\,f_m)(x)>1\})\\
&&\quad\lesssim \sup_{t>0}t^{\frac{1}{m}}\nu_{\vec{w}}\big(\{x\in\mathbb{R}^n:\,M^{\sharp}_{\mathscr{D},\,\delta}\big(\mathcal{A}_{m;\,\mathcal{S},\,\vec{b}}(f_1,\,\dots,\,f_m)\big)(x)>t\}\big)\\
&&\quad\lesssim\prod_{j=1}^m\|f_j\|_{L^{p_j}(\mathbb{R}^n,\,w_j)}^{\frac{1}{m}},\nonumber
\end{eqnarray*}
and then  completes the proof of Theorem \ref{t1.3}.\qed

\section{Applications to the Commutators of Calder\'on}

Let us consider the $m$-th commutator of Calder\'on, which is defined by
$$\mathcal{C}_{m+1}(a_1,\,\dots,\,a_m,\,f)(x)={\rm p.\,v.}\,\int_{\mathbb{R}^n}\frac{\prod_{j=1}^m(A_j(x)-A_j(y))}{(x-y)^{m+1}}f(y){\rm d}y,
$$
where $a_j=A_j'$. This operator first appeared in the study
of Cauchy integrals along Lipschitz curves and, in fact, led to the first proof of the $L^2$
boundedness of the latter.

When $m = 1$, it is well known that $\mathcal{C}_2$ is bounded from $L^{p_1}(\mathbb{R})\times L^{p_2}(\mathbb{R})$ to $L^p(\mathbb{R})$ when
$1<p_1,\,p_2\leq \infty$ and $\frac{1}{2}<p\leq\infty$ satisfying $1/p=1/p_1+1/p_2$; and moreover, it is bounded from
$L^{p_1}(R)\times L^{p_2}(\mathbb{R})$ to $L^{p,\,\infty}(\mathbb{R})$ if $\min\{p_1,\,p_2\}=1$ and in particular it is bounded from
$L^1(\mathbb{R})\times L^1(\mathbb{R})$ to $L^{\frac{1}{2}}(\mathbb{R})$; see \cite{ca1,ca2}. The corresponding result that $\mathcal{C}_3$ maps
$L^1(\mathbb{R})\times L^1(\mathbb{R})\times L^1(\mathbb{R})$ to $L^{\frac{1}{3},\,\infty}(\mathbb{R})$ was proved by Coifman and Meyer; see \cite{cm2}, while the analogous
result for  $\mathcal{C}_{m+1}$, $m\geq 3$, was established by Duong, Grafakos, and Yan \cite{dgy}.
As it was proved in \cite{dgy}, $\mathcal{C}_{m+1}$ can be rewritten as the folloing multilinear singular integral operator
\begin{eqnarray}&&\mathcal{C}_{m+1}(a_1,\,\dots,\,a_m,\,f)(x)\\
&&\quad=\int_{\mathbb{R}^{m+1}}K(x; y_1,\dots, y_{m+1})\prod_{j=1}^ma_j(y_j)f(y_{m+1}) dy_1 \dots dy_{m+1};\nonumber
\end{eqnarray}
with
\begin{eqnarray*}
K(x;y_1,\dots, y_{m+1}) =
\frac{(-1)^{me(y_{m+1}-x)}}{(x-y_{m+1})^{m+1}}
\prod_{j=1}^m
\chi_{(\min\{x, y_{m+1}\}, \max\{x, y_{m+1}\})}(y_j),
\end{eqnarray*}
and $e$ is the characteristic function of $[0,\,\infty)$. Using some new maximal operators, Grafakos, Liu and Yang \cite{gly} proved that if $p_1,\,\dots,\,p_{m+1}\in [1,\,\infty)$ and $p\in [\frac{1}{m+1},\,\infty)$ with $1/p=1/p_1+\dots+1/p_{m+1}$,  and $\vec{w}=(w_1,\,\dots,\,w_m, w_{m+1})\in A_{\vec{P}}(\mathbb{R}^{m+1})$, then $\mathcal{C}_{m+1}$ is bounded
from $L^{p_1}(\mathbb{R},\,w_1)\times \dots\times L^{p_{m+1}}(\mathbb{R},\,w_m)$ to $L^{p,\,\infty}(\mathbb{R}^n, \nu_{\vec{w}})$, and when $\min_{1\leq j\le m+1}p_j>1$, $\mathcal{C}_{m+1}$ is bounded from $L^{p_1}(\mathbb{R}, w_1)\times \dots\times L^{p_{m+1}}(\mathbb{R},w_{m+1})$ to $L^{p}(\mathbb{R},\,\nu_{\vec{w}})$. It was pointed out in \cite{huz} that $\mathcal{C}_{m+1}$ satisfies Assumption \ref{a1.1} and (1.11). Thus by Theorems \ref{t1.1}, \ref{t1.2} and \ref{t1.3}, we have the following conclusions.
\begin{corollary}\label{c1.1}
Let $m\geq 1$, $p_1,\dots,p_{m+1},\,q_1,\dots,q_{m+1}\in (1,\,\infty)$, $p,q\in (\frac{1}{m+1},\infty)$ with $1/p=1/p_1+\dots+1/p_{m+1}$, $1/q=1/q_1+\dots+1/q_{m+1}$, $\vec{w}=(w_1,\dots,w_{m+1})\in A_{\vec{P}}(\mathbb{R}^{m+1})$. Then \begin{eqnarray*}&&\|\{\mathcal{C}_{m+1}(a_1^k,\dots,a_m^k,\,f^k)\}\|_{L^p(l^q;\mathbb{R}^n,\nu_{\vec{w}})}\lesssim[\vec w]_{A_{\vec P}}^{\max \{1, \frac{p_1'}{p},\cdots, \frac{p'_{m+1}}{p}\}}\\
&&\qquad\qquad\times\prod_{j=1}^m\|\{a_j^k\}\|_{L^{p_j}
(l^{q_j};\,\mathbb{R},w_j)}\|\{f^k\}\|_{L^{p_{m+1}}(l^{q_{m+1}};\,\mathbb{R},w_{m+1})}.\end{eqnarray*}
\end{corollary}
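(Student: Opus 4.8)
The plan is to obtain Corollary \ref{c1.1} as a direct application of Theorem \ref{t1.1}, taken with $n=1$ and with the number of linear slots equal to $m+1$. Thus the whole proof reduces to checking that the $(m+1)$-linear operator $\mathcal{C}_{m+1}$, written in the form (5.1) with the explicit kernel $K$ of (5.2), satisfies the three hypotheses (i), (ii), (iii) of Theorem \ref{t1.1}; once this is done, (1.12) yields exactly the claimed vector-valued inequality with the exponent $\max\{1,p_1'/p,\dots,p_{m+1}'/p\}$ on $[\vec{w}]_{A_{\vec{P}}}$.

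First I would verify the size condition (1.7). On the support of $K$ the factor $\prod_{j=1}^m\chi_{(\min\{x,y_{m+1}\},\,\max\{x,y_{m+1}\})}(y_j)$ forces $|x-y_j|\le|x-y_{m+1}|$ for each $1\le j\le m$, so $\sum_{j=1}^{m+1}|x-y_j|\le(m+1)|x-y_{m+1}|$; combined with $|K(x;y_1,\dots,y_{m+1})|\le|x-y_{m+1}|^{-(m+1)}$ this gives $|K(x;y_1,\dots,y_{m+1})|\lesssim\big(\sum_{j=1}^{m+1}|x-y_j|\big)^{-(m+1)}$, which is (1.7) with the exponent $mn$ there replaced by $(m+1)\cdot1$. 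For hypothesis (i) I would invoke the classical unweighted mapping properties of the Calder\'on commutators from \cite{ca1,ca2,cm2,dgy}: $\mathcal{C}_{m+1}$ is bounded from $L^{r_1}(\mathbb{R})\times\cdots\times L^{r_{m+1}}(\mathbb{R})$ to $L^r(\mathbb{R})$ for every choice of $r_1,\dots,r_{m+1}\in(1,\infty)$ with $1/r=\sum_j 1/r_j$, so admissible exponents (with $r\in(0,\infty)$) are available. Hypotheses (ii) (the thickened regularity condition (1.11)) and (iii) (Assumption \ref{a1.1}) for the kernel (5.2) were verified in \cite{huz} (see also \cite{dgy,gly}); I would cite that verification, recording that the relevant approximation to the identity is the one already used in \cite{dgy}.

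The one genuinely delicate point, and the step I expect to be the main obstacle if one reproduces rather than cites the argument of \cite{huz}, is the regularity estimate (1.11): because of the jump discontinuities of $x\mapsto\chi_{(\min\{x,y_{m+1}\},\,\max\{x,y_{m+1}\})}(y_j)$, no pointwise Lipschitz bound in $x$ can hold. Instead, for $8|x-x'|<\min_j|x-y_j|$ and a radius $D$ with $2|x-x'|<D<\tfrac{1}{4}\min_j|x-y_j|$, one observes that $K(x;\vec{y})-K(x';\vec{y})$ can be nonzero only if some $y_j$ lies between $x$ and $x'$ (hence within $O(D)$ of $x$), an event excluded by $4D<\min_j|x-y_j|$; on the remaining range the difference is controlled purely by the Lipschitz decay of $t\mapsto(x-t)^{-(m+1)}$ on $|x-y_{m+1}|\gtrsim D$, which produces $|K(x;\vec{y})-K(x';\vec{y})|\lesssim D^{\gamma}\big(\sum_j|x-y_j|\big)^{-(m+1)-\gamma}$ with $\gamma=1$. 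This is precisely the role the auxiliary radius $D$ plays in (1.11). With (i)--(iii) established, Theorem \ref{t1.1} applies verbatim to $\mathcal{C}_{m+1}$ with index set $\{1,\dots,m+1\}$, and the corollary follows.\qed
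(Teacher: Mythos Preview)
Your proposal is correct and follows the same route as the paper: the corollary is deduced directly from Theorem \ref{t1.1} (with $n=1$ and $m+1$ slots) once the hypotheses (i)--(iii) are checked, and the paper likewise cites \cite{huz} (together with \cite{dgy,gly}) for the verification of Assumption \ref{a1.1} and (1.11). Your additional sketch of why (1.11) holds---observing that under $4D<\min_j|x-y_j|$ neither the characteristic-function factors nor the sign factor can jump, leaving only the Lipschitz variation of $(x-y_{m+1})^{-(m+1)}$---is a correct elaboration of the point the paper leaves to the reference.
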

\begin{corollary}Let $m\geq 1$, $p_1,\dots,p_{m+1},\,q_1,\,\dots,q_{m+1}\in (1,\,\infty)$, $p,\,q\in (\frac{1}{m+1},\infty)$ with $1/p=1/p_1+\dots+1/p_{m+1}$, $1/q=1/q_1+\dots+1/q_{m+1}$, $\vec{w}=(w_1,\,\dots,\,w_{m+1})\in A_{\vec{P}}(\mathbb{R}^{m+1})$.
Let $b_j\in Osc_{{\rm exp}L^{s_j}}(\mathbb{R})$  with $\sum_{j=1}^{m+1}\|b_j\|_{Osc_{{\rm exp}L^{s_j}}(\mathbb{R})}=1$.  Then $\mathcal{C}_{m+1,\,\vec{b}}$, the commutator of $\mathcal{C}_{m+1}$ defined as (1.12), satisfies the weighted estimate  that
\begin{eqnarray*}&&\|\{\mathcal{C}_{m+1,\,\vec{b}}(a_1^k,\dots,a_m^k,\,f^k)\}\|_{L^p(l^q;\mathbb{R}^n,\nu_{\vec{w}})}\lesssim[\vec w]_{A_{\vec P}}^{\max \{1, \frac{p_1'}{p},\cdots, \frac{p'_{m+1}}{p}\}}\\
&&\quad\times\Big([\nu_{\vec{w}}]_{A_{\infty}}^{\frac{1}{s_*}}+\sum_{i=1}^m[\sigma_i]_{A_{\infty}}^{\frac{1}{s_i}}\Big)\prod_{j=1}^m\|\{a_j^k\}\|_{L^{p_j}
(l^{q_j};\,\mathbb{R},w_j)}\|\{f^k\}\|_{L^{p_{m+1}}(l^{q_{m+1}};\,\mathbb{R},w_{m+1})}.\end{eqnarray*}
\end{corollary}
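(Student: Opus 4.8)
The plan is to obtain this corollary as a direct consequence of Theorem \ref{t1.2}, by checking that the $(m+1)$-linear operator $\mathcal{C}_{m+1}$ falls under the structural hypotheses imposed on $T$ in Theorem \ref{t1.1}, with the integer $m$ there replaced by $m+1$ and the ambient dimension taken to be $n=1$. Since the functions $b_1,\dots,b_{m+1}$ lie in $Osc_{{\rm exp}L^{s_j}}(\mathbb{R})$ and are normalized by $\sum_{j=1}^{m+1}\|b_j\|_{Osc_{{\rm exp}L^{s_j}}(\mathbb{R})}=1$, the hypothesis of Theorem \ref{t1.2} on the symbols is met verbatim; all the work is on the operator side, namely verifying conditions (i)--(iii) of Theorem \ref{t1.1} for $\mathcal{C}_{m+1}$.

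First I would record that, by the representation (5.1), $\mathcal{C}_{m+1}$ is an $(m+1)$-linear singular integral operator with kernel $K$ in the sense of (1.6). For hypothesis (i) it is enough to exhibit one admissible tuple of exponents: with $r_1=\dots=r_{m+1}=2(m+1)$ one has $1/r=\sum_j 1/r_j=1/2$, and the bound $\|\mathcal{C}_{m+1}(a_1,\dots,a_m,f)\|_{L^2(\mathbb{R})}\lesssim\prod_{j=1}^m\|a_j\|_{L^{2(m+1)}(\mathbb{R})}\|f\|_{L^{2(m+1)}(\mathbb{R})}$ is contained in the classical mapping properties of the Calder\'on commutators recalled in Section 5 (taking all weights trivial in \cite{gly}; see also \cite{ca1,ca2,cm2,dgy}). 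For hypothesis (ii), although the kernel $K$ in (5.1) carries no smoothness in the variables $y_1,\dots,y_m$ since it is assembled from characteristic functions, a direct computation shows that it satisfies the size bound (1.7) together with the thickened regularity condition (1.11) in the $x$-variable; this was carried out in \cite{huz}, which I would cite. For hypothesis (iii), one produces for each $j$ a generalized approximation to the identity $\{A_t^j\}_{t>0}$ and kernels $K_t^j$ for which the comparison estimate in Assumption \ref{a1.1} holds; the needed family was constructed in \cite{dgy} and shown in \cite{huz} to be compatible with (1.11).

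With (i)--(iii) in hand, $\mathcal{C}_{m+1}$ is exactly an operator of the type treated in Theorems \ref{t1.1} and \ref{t1.2} with $m+1$ linear entries. Applying Theorem \ref{t1.2} with $m+1$ in place of $m$, $n=1$, and $\sigma_i=w_i^{-1/(p_i-1)}$, then yields the asserted weighted vector-valued inequality for $\mathcal{C}_{m+1,\vec{b}}$: the exponent $\max\{1,p_1'/p,\dots,p_{m+1}'/p\}$ on $[\vec w]_{A_{\vec P}}$ and the correction factor $[\nu_{\vec w}]_{A_\infty}^{1/s_*}+\sum_i[\sigma_i]_{A_\infty}^{1/s_i}$ are read off directly from the right-hand side of (1.15).

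The argument is therefore a routine ``verify the hypotheses, invoke the theorem'' reduction; the genuine content is concentrated in hypotheses (ii) and (iii), i.e.\ in the fact that the non-smooth kernel of $\mathcal{C}_{m+1}$ nevertheless obeys the weak regularity condition (1.11) and Assumption \ref{a1.1}. Since these facts are already established in \cite{dgy,huz}, in the present paper it is legitimate simply to cite them; a self-contained treatment would have to reproduce those kernel estimates, and that is the only step that requires real work.
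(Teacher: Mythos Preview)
Your proposal is correct and matches the paper's own argument essentially verbatim: the paper simply notes that \cite{huz} establishes that $\mathcal{C}_{m+1}$ satisfies Assumption~\ref{a1.1} and (1.11), and then invokes Theorems~\ref{t1.1}, \ref{t1.2}, \ref{t1.3} to obtain all three corollaries at once. Your write-up is somewhat more explicit about hypothesis~(i) and about the normalization of the $b_j$, but the route is identical.
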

\begin{corollary}Let $m\geq 1$, $q_1,\,\dots,q_{m+1}\in (1,\,\infty)$, $q\in (1/(m+1),\infty)$ with $1/p=1/p_1+\dots+1/p_{m+1}$, $1/q=1/q_1+\dots+1/q_{m+1}$, $\vec{w}=(w_1,\,\dots,\,w_{m+1})\in A_{\vec{P}}(\mathbb{R}^{m+1})$.
Let $b_j\in Osc_{{\rm exp}L^{s_j}}(\mathbb{R})$ $(1\leq j\leq m+1)$ with $\sum_{j=1}^{m+1}\|b_j\|_{Osc_{{\rm exp}L^{s_j}}(\mathbb{R})}=1$. Then for each $\lambda>0$,
\begin{eqnarray*}&&\nu_{\vec{w}}(\{x\in\mathbb{R}^n:\,\|\{\mathcal{C}_{m+1,\,\vec{b}}(a_1^k,\dots,a_m^k,\,f^k)(x)\}\|_{l^q}>\lambda\})\\
&&\quad\lesssim\prod_{j=1}^m\Big(\int_{\mathbb{R}^{n}}
\frac{\|\{a_j^k(y_j)\}\|_{l^{q_j}}}{\lambda^{\frac{1}{m+1}}}\log^{\frac{1}{s_*}}\Big(1+\frac{\|\{a_j^k(y_j)\}\|_{l^{q_j}}}
{\lambda^{\frac{1}{m+1}}}\Big)w_j(y_j){\rm d}y_j\Big)^{\frac{1}{m+1}}\\
&&\qquad\times\Big(\int_{\mathbb{R}^{n}}
\frac{\|\{f^k(y)\}\|_{l^{q_j}}}{\lambda^{\frac{1}{m+1}}}\log^{\frac{1}{s_*}}\Big(1+\frac{\|\{f^k(y)\}\|_{l^{q_j}}}
{\lambda^{\frac{1}{m+1}}}\Big)w_{m+1}(y){\rm d}y\Big)^{\frac{1}{m}}.\end{eqnarray*}
\end{corollary}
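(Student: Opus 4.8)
The plan is to derive the stated inequality as an immediate application of Theorem \ref{t1.3}, taken with the integer $m$ there replaced by $m+1$ and with $\mathcal{C}_{m+1}$ in the role of the generic $(m+1)$-linear operator $T$. For this one only has to confirm that $\mathcal{C}_{m+1}$ belongs to the class of operators treated in Theorem \ref{t1.1}: that it is $(m+1)$-linear with a kernel $K$ in the sense of (1.6), that $K$ satisfies the size condition (1.7) and the $x$-regularity condition (1.11), that $\mathcal{C}_{m+1}$ satisfies Assumption \ref{a1.1}, and that $\mathcal{C}_{m+1}$ is bounded from $L^{r_1}(\mathbb{R})\times\dots\times L^{r_{m+1}}(\mathbb{R})$ to $L^{r}(\mathbb{R})$ for some exponents $r_1,\dots,r_{m+1}\in(1,\infty)$ with $1/r=1/r_1+\dots+1/r_{m+1}$.

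First I would recall from Duong, Grafakos and Yan \cite{dgy} the representation (5.1) of $\mathcal{C}_{m+1}$ with its explicit indicator-type kernel, and the boundedness of $\mathcal{C}_{m+1}$ from $L^{r_1}(\mathbb{R})\times\dots\times L^{r_{m+1}}(\mathbb{R})$ to $L^{r}(\mathbb{R})$ (which, for $m+1\le 3$, goes back to Calder\'on and to Coifman--Meyer). The essential point is that this kernel carries no smoothness in $y_1,\dots,y_m$ because of the factors $\chi_{(\min\{x,y_{m+1}\},\max\{x,y_{m+1}\})}(y_j)$; this is exactly the situation Assumption \ref{a1.1} was designed for, and the verification that $\mathcal{C}_{m+1}$ satisfies both Assumption \ref{a1.1} and the regularity condition (1.11) (with exponent $\gamma$ that may be taken to be $1$) was carried out in \cite{huz}. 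The size bound (1.7) is immediate from the explicit form of $K$. Hence $\mathcal{C}_{m+1}$ satisfies hypotheses (i)--(iii) of Theorem \ref{t1.1} with $n=1$, and since $m+1\ge 2$ the constraint $m\ge 2$ of Theorem \ref{t1.1} (hence of Theorem \ref{t1.3}) is met.

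Granting this, the corollary follows by quoting (1.17): choose $q_1,\dots,q_{m+1}\in(1,\infty)$ with $1/q=1/q_1+\dots+1/q_{m+1}$, $\vec{w}\in A_{1,\dots,1}(\mathbb{R}^{m+1})$, and $b_j\in Osc_{{\rm exp}L^{s_j}}(\mathbb{R})$ with $\sum_{j=1}^{m+1}\|b_j\|_{Osc_{{\rm exp}L^{s_j}}(\mathbb{R})}=1$, then apply Theorem \ref{t1.3} to the sequences $\{a_1^k\},\dots,\{a_m^k\},\{f^k\}$. Because the operator is $(m+1)$-linear, the exponent $1/m$ in (1.17) becomes $1/(m+1)$, one has $s_*=\min_{1\le j\le m+1}s_j$, and the product over the $m+1$ inputs splits into $m$ factors carrying the $a_j^k$ and one factor carrying $f^k$, which is precisely the claimed estimate. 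There is no genuine obstacle: the entire content is the identification of $\mathcal{C}_{m+1}$ as an admissible operator for Theorem \ref{t1.1}, and that identification is already available in \cite{dgy} and \cite{huz}, so the proof reduces to citing those facts and invoking Theorem \ref{t1.3}.
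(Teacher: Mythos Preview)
Your proposal is correct and matches the paper's approach exactly: the paper simply observes that $\mathcal{C}_{m+1}$ satisfies Assumption~\ref{a1.1} and (1.11) (citing \cite{dgy} and \cite{huz}) and then states that Corollaries~5.1--5.3 follow directly from Theorems~\ref{t1.1}, \ref{t1.2}, and~\ref{t1.3}. Your additional remark that $m\ge 1$ ensures the $(m+1)$-linear operator meets the hypothesis $m\ge 2$ of Theorem~\ref{t1.1} is a useful clarification the paper leaves implicit.
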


\medskip

{\bf Added in Proof}. After this paper was prepared, we learned that Dr. Kangwei Li \cite{li} also observed that, Lerner's idea in \cite{ler2} applies to the multilinear singular integral operators. We remark that our argument in the proof of Theorem \ref{t3.1} also based on this observation. Li \cite{li} proved that the multilinear singular integral operators whose kernels satisfy $L^r$- H\"ormander condition can be dominated by sparse operators. The main results in \cite{li} are different from the results in this paper and are of independent interest.


\begin{thebibliography}{99}
\bibitem{bcd}T. A. Bui, J. Conde-Alonso, X. T. Duong and M. Hormozi, Weighted bounds for multilinear operators with non-smooth kernels,
Arxiv:1506.07752.
\bibitem{buiduong}T. A. Bui and X. T. Doung, On commutators of vector BMO functions and multilinear singular integrals with non-smooth
kernels, J. Math. Anal. Appl. \textbf{371} (2010), 80-94.
\bibitem{bu}S. M. Buckley, Estimates for operator norms on weighted spaces and reverse Jensen inequalities, Trans. Amer. Math. Soc. \textbf{340} (1993), 253-272.
\bibitem{ca1} A. P. Calder\'on, Commutators of singular integral operators, Proc. Nat. Acad. Sci.
U.\,S.\,A. \textbf{53} (1965), 1092-1099.
\bibitem{ca2} C. P. Calder\'on, On commutators of singular integrals, Studia Math. \textbf{53} (1975),
139-174.
\bibitem{chpp}  D. Chung, M. Pereyra, and C. P\'erez, Sharp bounds for general commutators
on weighted Lebesgue spaces, Trans. Amer. Math. Soc.  364 (2012),
1163-1177.
\bibitem{car}J. M. Conde-Alonso and G. Rey, A pointwise estimate for positive dyadic shifts and some applications,
Math. Ann. 2015, DOI 10.1007/s00208-015-1320-y.
\bibitem{cm1} R. R. Coifman and Y.  Meyer,  On commutators of
singular integrals and bilinear singular integrals,  {Trans. Amer.
Math. Soc.}, \textbf{212} (1975), 315-331.
\bibitem{cm2}R. R. Coifman and Y.  Meyer,   Au del$\grave{\rm a}$ des op\'erateurs
pseudo-diff\'erentiels, {Ast\'eriaque} \textbf{57}, 1978.
1-185.
\bibitem{csmp} D. Cruz-Uribe, SFO, J. Martell and C. P\'erez, Sharp weighted estimates for classical operators, Adv. Math.
 \textbf{229} (2012), 408--441.
\bibitem{dhli} W. Dami\'an, M. Hormozi and and K. Li, New bounds for bilinear Calder\'on-Zygmund operators and applications, arxiv:1512.02400.
\bibitem{dra}O. Dragicevi\'c, L. Grafakos, M. Pereyra and S. Petermichl, Extrapolation and sharp norm estimates for classical operators on weighted Lebesgue spaces, Publ. Mat. \textbf{49} (2005), 73-91.

\bibitem{dggy} X. Duong, R. Gong, L. Grafakos, J. Li and L. Yan, Maximal operator for multilinear singular integrals with non-smooth kernels.
Indiana Univ. Math. J. \textbf{58} (2009),  2517-2542.
\bibitem{dgy}X. Duong, L. Grafakos, L. Yan, Multilinear operators with non-smooth kernels and commutators of singular integrals.
Trans. Amer. Math. Soc. \textbf{362} (2010),  2089-2113.
\bibitem{fes} C. Fefferman  and E. M. Stein,  Some maximal operators, Amer. J. Math. \textbf{93} (1971), 107-115.
\bibitem{gk}  L. Grafakos and N. Kalton, Multilinear Calder\'on-Zygmund operators on Hardy spaces, Collect. Math.  \textbf{52} (2001)
169-179.
\bibitem{gly} L. Grafakos, L. Liu and D. Yang,  Multilple-weighted
norm  inequalities for maximal singular integrals with non-smooth
kernels, Proc. Royal Soc. Edinb. \textbf{141A} (2011), 755-775.
\bibitem{gra}  L. Grafakos,  Modern Fourier analysis, GTM250, 2nd
Edition, Springer, New York, 2008.
\bibitem{gt2} L. Grafakos and R. Torres, Multilinear Calder\'on-Zygmund theory,
{ Adv. Math.} \textbf{165} (2002), 124-164.

\bibitem{guoen} G. Hu and D. Li, A Cotlar type inequality for the multilinear singular
integral operators and its applications, J. Math. Anal. Appl. \textbf{290} (2004),  639-653.
\bibitem{huli}G. Hu and K. Li, Weighted vector-valued inequalities for a class of multilinear singular integral operators, Proc. Edinb. Math. Soc., to appear.
\bibitem{huz} G. Hu and Y. Zhu, Weighted norm inequalities with general weights for the commutator of Calder\'on, Acta Math. Sinica, English Ser.  \textbf{29} (2013), 505-514.
\bibitem{hyt} T. Hyt\"onen, The sharp weighted bound for general Calder\'on-Zygmund operators,
Ann. of Math. \textbf{175} (2012), 1473-1506.
\bibitem{hlp}T.  Hyt\"onen, M.  Lacey and C. P\'erez, Sharp weighted bounds for the q-
variation of singular integrals, Bull. Lond. Math. Soc. \textbf{45} (2013), 529-
540.
\bibitem{hp} T. Hyt\"onen and C. P\'erez, Sharp weighted bounds  involving $A_{\infty}$, Anal. PDE. \textbf{6} (2013), 777-818.
\bibitem{ler2} A. K. Lerner, On pointwise estimate involving sparse operator, New York J. Math. 22(2016), 341-349.
\bibitem{ler}A. Lerner,   S. Ombrossi,  C. P\'erez,  R. H. Torres and R. Trojillo-Gonzalez, New maximal functions and multiple
weights for the multilinear Calder\'on-Zygmund theorey,  {Adv. Math.}
\textbf{220} (2009), 1222-1264.
\bibitem{ler3} A. K. Lerner, S. Obmrosi and I. Rivera-Rios, On pointwise and weighted estimates for commutators of Calder\'on-Zygmund operators, arxiv:1604.01334.
\bibitem{li} K. Li, Sparse domination theorem for mltilinear singular integral operators with $L^r$-H\"ormander condition, arxiv:1606.03952.
\bibitem{lms} K. Li, K. Moen and W. Sun, The sharp weighted bound for multilinear maximal
functions and Calder\'on-Zygmund operators, J. Four. Anal. Appl. \textbf{20} (2014), 751-765.
\bibitem{ls} K. Li and W. Sun, Weak and strong type weighted estimates for multilinear
Calder\'on-Zygmund operators, Adv. Math. \textbf{254} (2014), 736-771.
\bibitem{mu} B. Muckenhoupt, Weighted norm inequalities for the Hardy maximal function, Trans.
Amer. Math. Soc. \textbf{165} (1972), 207-226.
\bibitem{perez2} C. P\'erez, G. Pradolini, R. H. Torres and  R. Trujillo-Gonz\'alez, End-point estimates for iterated commutators of multilinear singular integrals, Bull. London Math. Soc. \textbf{46} (2014), 26-42.
\bibitem{pt} C. P\'erez and R. H. Torres, Sharp maximal function estimates for multilinear singular integrals, Contemp. Math.
\textbf{320} (2003), 323-331.
\bibitem{pet1}S. Petermichl, The sharp bound for the Hilbert transform on weighted Lebesgue spaces in terms of
the classical $A_p$ characteristic, Amer. J. Math. \textbf{129} (2007), 1355-1375.
\bibitem{pet2}S. Petermichl, The sharp weighted bound for the Riesz transforms, Proc. Amer. Math. Soc. \textbf{136}
(2008), 1237-1249.
\bibitem{rr}M.  Rao and Z. Ren, Theory of Orlicz spaces, Monographs and Textbooks in Pure
and Applied Mathematics, 146, Marcel Dekker Inc., New York, 1991.

\bibitem{ste2} E. M. Stein, Harmonic Analysis, Real Variable Methods, Orthogonality, and Oscillatory
Integrals, Princeton Univ. Press, Princeton, NJ. 1993.

\end{thebibliography}
\end{document}